\documentclass[12pt]{amsart}
\usepackage{amssymb, latexsym, amsmath, amsfonts}

\newtheorem{thm}{Theorem}[section]

\newtheorem{lem}[thm]{Lemma}
\newtheorem{prop}[thm]{Proposition}
\theoremstyle{definition}

\theoremstyle{remark}
\newtheorem{rem}[thm]{Remark}
\numberwithin{equation}{section}

\setlength{\oddsidemargin}{0in} \setlength{\evensidemargin}{0in}
\setlength{\textwidth}{6.3in} \setlength{\topmargin}{-0.2in}
\setlength{\textheight}{9.0in}

\newcommand{\mbf}{\mathbf}
\newcommand{\ra}{\rightarrow}

\newcommand{\ep}{\epsilon}
\newcommand{\no}{\noindent}

\newcommand{\cal}{\mathcal}

\begin{document}
\title{On isometries of the Kobayashi and Carath\'{e}odory metrics}
\keywords{Kobayashi metric; Carath\'{e}odory metric; complex
geodesic; isometry}
\thanks{}
\subjclass{Primary: 32F45 ; Secondary : 32Q45}
\author{Prachi Mahajan}
\address{Department of Mathematics,
Indian Institute of Science, Bangalore 560 012, India}
\email{mittal@math.iisc.ernet.in}

\begin{abstract}
This article considers isometries of the Kobayashi and
Carath\'{e}od-ory metrics on domains in $ \mathbf{C}^n $ and the
extent to which they behave like holomorphic mappings. First we
prove a metric version of Poincar\'{e}'s theorem about
biholomorphic inequivalence of $ \mathbf{B}^n $, the unit ball in
$ \mathbf{C}^n $ and $ \Delta^n $, the unit polydisc in $
\mathbf{C}^n $ and then provide few examples which
\textit{suggest} that $ \mathbf{B}^n $ cannot be mapped
isometrically onto a product domain. In addition, we prove several
results on continuous extension of isometries $ f : D_1
\rightarrow D_2 $ to the closures under purely local assumptions
on the boundaries. As an application, we show that there is no
isometry between a strongly pseudoconvex domain in $ \mathbf{C}^2
$ and certain classes of weakly pseudoconvex finite type domains
in $ \mathbf{C}^2 $.
\end{abstract}

\maketitle

\section{Introduction}

\no The principal aim of this article is to explore the phenomenon
of the rigidity of continuous isometries of the Kobayashi and the
Carath\'{e}odory metrics. More precisely if $D, D'$ are two
domains in $\mbf C^n$ and $f : D \ra D'$ is a continuous isometry
of the Kobayashi metrics on $D, D'$, it is not known whether $f$
must necessarily be holomorphic or conjugate holomorphic. The same
question can be asked for the Carath\'{e}odory metric or for that
matter any invariant metric as well. An affirmative answer for the
Bergman metric was given in \cite{Greene&Krantz-1982} in the case
when $D$ and $D' $ are both $C^2$-smooth strongly pseudoconvex
domains in $ \mathbf{C}^n $ and this required knowledge of the
limiting behaviour of the holomorphic sectional curvatures of the
Bergman metric near strongly pseudoconvex points. In general, the
Kobayashi metric is just upper semicontinuous and therefore a
different approach will be needed for this question. The case of
continuous isometries when $D$ is smooth strongly convex and $D'$
is the unit ball was dealt with in \cite{Seshadri&Verma-2009} and
this was improved upon in \cite{Kim&Krantz-2008} to handle the
case when $D$ is a $C^{2, \ep}$-smooth strongly pseudoconvex
domain, and a common ingredient in both proofs was the use of
Lempert discs. On the other hand, it has been remarked in
\cite{Frankel-1991} that the localization of a biholomorphic
mapping between bounded domains near a given boundary point should
follow from general principles of Gromov hyperbolicity -- an
example of this can be found in \cite{Balogh&Bonk-2000}. Motivated
by such considerations it seemed natural to determine the extent
to which isometries behave like holomorphic mappings and examples
are provided by the following results. The first three theorems in
particular deal with the most natural and ubiquitous domains,
namely the unit ball and the polydisc (and product domains in
general) and each statement leads to the next in a natural
succession. Instead of providing a general statement (i.e.,
Theorem \ref{0.3}) only, we have instead focussed on assertions
that gradually lead up to it, for this lays bare the arguments
used in a systematic manner.

\begin{thm} \label{0.1}
There is no $ C^1$-Kobayashi or Carath\'{e}odory isometry between
$ \mathbf{B}^n $, the unit ball in $ \mathbf{C}^n $ and $
\Delta^n$, the unit polydisc in $ \mathbf{C}^n $ for any $ n > 1
$.
\end{thm}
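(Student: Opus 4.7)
Assume, toward a contradiction, that $f : \mbf B^n \to \Delta^n$ is a $C^1$ isometry in the Kobayashi or Carath\'{e}odory distance, which I shall denote by $d$. My first move is to pass to the infinitesimal level. For any invariant distance $d$ on a convex domain $D$ one has
\[
\lim_{t \to 0^+} \frac{d_D(p,\, p + t v)}{t} \;=\; F_D(p, v),
\]
where $F_D$ is the associated infinitesimal Finsler metric (Kobayashi or Carath\'{e}odory). Since $f$ is assumed $C^1$ and distance-preserving, this limit formula forces the $\mbf R$-linear differential $df_p$ to satisfy
\[
F_{\Delta^n}(f(p),\, df_p v) \;=\; F_{\mbf B^n}(p, v)
\]
for every $p \in \mbf B^n$ and every $v \in \mbf C^n$. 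In particular $df_p$ is injective (as $F_{\mbf B^n}(p, \cdot)$ is a genuine norm), and hence a real-linear bijection of $\mbf C^n$.

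Next I would normalise using the transitive automorphism group of $\Delta^n$: after post-composing $f$ with a suitable element of $\mathrm{Aut}(\Delta^n)$, which is a holomorphic (hence Kobayashi and Carath\'{e}odory) isometry, I may assume $f(0) = 0$. On both $\mbf B^n$ and $\Delta^n$ the Kobayashi and Carath\'{e}odory infinitesimal metrics coincide by Lempert's theorem, and at the origin they are given explicitly by
\[
F_{\mbf B^n}(0, v) \;=\; \|v\|, \qquad F_{\Delta^n}(0, v) \;=\; \max_{1 \le i \le n} |v_i|.
\]
Consequently $L := df_0$ is an $\mbf R$-linear bijection of $\mbf C^n$ that sends the indicatrix $\mbf B^n$ onto the indicatrix $\Delta^n$, and hence extends by continuity to a homeomorphism $\overline{\mbf B^n} \to \overline{\Delta^n}$ that must carry extreme points to extreme points.

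The contradiction is now purely convex-geometric. The extreme points of $\overline{\mbf B^n}$ form the sphere $\partial \mbf B^n$, of real dimension $2n-1$, whereas the extreme points of $\overline{\Delta^n}$ form the distinguished boundary $(\partial \Delta)^n$, of real dimension $n$. For $n > 1$ no bijection between these two sets can exist, on dimensional grounds alone; equivalently, $L(\mbf B^n)$ is a real ellipsoid with $C^\infty$ strictly convex boundary, while $\partial \Delta^n$ is neither smooth nor strictly convex. The one place requiring a little care is verifying the limit formula relating the integrated metric to the infinitesimal one in the Carath\'{e}odory case, but this is standard on convex domains; I foresee no other technical obstacle.
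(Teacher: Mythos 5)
Your proposal is correct, but it takes a genuinely different and more elementary route than the paper. The paper restricts the isometry to coordinate discs of $\Delta^n$, runs a Myers--Steenrod argument to get smoothness of each restricted map into $\mathbf{B}^n$, identifies complex lines via the curvature characterization (a $2$-plane in $\mathbf{B}^n$ has sectional curvature $-4$ iff it is $J_0$-invariant), and then invokes Hartogs' theorem on separate analyticity to conclude the isometry would be biholomorphic, contradicting Poincar\'e's theorem. You instead differentiate the distance-preserving identity at a single point: after normalizing $f(0)=0$, the relation $F_{\Delta^n}(0, df_0 v) = F_{\mathbf{B}^n}(0,v) $ exhibits $L = df_0$ as a real-linear isometry between $(\mathbf{C}^n, \|\cdot\|)$ and $(\mathbf{C}^n, \max_i|\cdot_i|)$, so $L(\mathbf{B}^n) = \Delta^n$; since a linear image of a Euclidean ball is a smooth strictly convex ellipsoid while $\partial\Delta^n$ contains segments (equivalently, the extreme points $S^{2n-1}$ and $(\partial\Delta)^n$ are homeomorphic via $L$, impossible for $n>1$), this is a contradiction. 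All the ingredients check out: the derivative formula $\lim_{t\to 0^+} d_D(p,p+tv)/t = F_D(p,v)$ is standard for the Carath\'eodory distance on any domain and for the Kobayashi distance on taut domains, and on $\mathbf{B}^n$ and $\Delta^n$ everything is explicit (indeed, with the paper's definition of a $C^1$-isometry as $f^*(F^K_{D_2})=F^K_{D_1}$, your first step holds by definition). One small imprecision: you should say no \emph{homeomorphism}, rather than no bijection, exists between the two sets of extreme points -- but $L$ restricted there is a homeomorphism, so the argument stands. What each approach buys: yours is local at one point, avoids Myers--Steenrod, curvature, Hartogs and Poincar\'e entirely, and in fact adapts to products of balls and of strongly convex domains (the product indicatrix is never strictly convex for $m\geq 2$); the paper's heavier machinery establishes the stronger structural conclusion that such an isometry would have to be (anti)holomorphic, which is the template reused in Theorems \ref{0.2}, \ref{0.3} and in the endgame of the proof of Theorem \ref{2}, where a single-point indicatrix comparison would not suffice.
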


\no Several remarks are in order here. Firstly, by a $
{C}^0$-Kobayashi (Carath\'{e}odory or inner Carath\'{e}odory)
isometry we mean a distance preserving bijection between the
metric spaces $ ((D_1, d_{D_1} )$ and $ ( D_2, d_{D_2} )$ ( $(D_1,
c_{D_1}) $ and $ (D_2, c_{D_2}) $;  $(D_1, c^i_{D_1}) $ and $
(D_2, c^i_{D_2}) $ respectively). Here $ d_D, c_D $ and $ c_D^i $
denote the Kobayashi, Carath\'{e}odory and inner Carath\'{e}odory
metrics respectively on the domain $ D$. For $ k \geq 1$, a $
{C}^k$-Kobayashi isometry is a $ {C}^k$-diffeomorphism $f$ from $
D_1$ onto $ D_2$ with $ f^*( F^K_{D_2}) = F^K_{D_1}$ where $
F^K_{D_1} $ and $ F^K_{D_2} $ denote the infinitesimal Kobayashi
metrics on $ D_1 $ and $ D_2 $ respectively. Second, note that
isometries are continuous when the domains are Kobayashi
hyperbolic for in this case, the topology induced by the Kobayashi
metric coincides with the intrinsic topology of the domain. Third,
Theorem \ref{0.1} may be regarded as a version of Poincar\'{e}'s
theorem about biholomorphic inequivalence of $ \mathbf{B}^n $ and
$ \Delta^n $ for isometries. As can be expected, the main step in
proving the above result is to show that the $ C^1$-smooth
isometry, if it exists, is indeed a biholomorphic mapping to
arrive at a contradiction. The proof of this is based on
differential geometric considerations in particular the theorem of
Myers--Steenrod --- as in \cite{Seshadri&Verma-2009} and the fact
that the Kobayashi metric of the ball is a smooth K\"{a}hler
metric of constant negative sectional curvature $ -4 $ plays a key
role.

\begin{thm} \label{0.2}
There is no $ C^1$-Kobayashi or Carath\'{e}odory isometry between
$ \mathbf{B}^n $ and the product of $ m $ Euclidean balls $
\mathbf{B}^{n_1} \times \mathbf{B}^{n_2} \times \ldots \times
\mathbf{B}^{n_m} $ for any $ 2 \leq m \leq n  $ where $ n = n_1 +
n_2 + \ldots + n_m $.
\end{thm}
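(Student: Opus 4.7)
The plan is to extend the tangent-space obstruction that underlies Theorem~\ref{0.1} to the case when the second domain is a product of balls rather than a polydisc. Suppose for contradiction that $f : \mbf{B}^n \ra D := \mbf{B}^{n_1} \times \cdots \times \mbf{B}^{n_m}$ is a $C^1$-Kobayashi (resp.\ Carath\'{e}odory) isometry. Since $f$ is a $C^1$-diffeomorphism, $df_z : T_z \mbf{B}^n \ra T_{f(z)} D$ is a real-linear isomorphism for every $z$, and the hypothesis $f^{*} F^K_D = F^K_{\mbf{B}^n}$ becomes
$$F^K_{\mbf{B}^n}(z,v) \;=\; F^K_D\bigl(f(z),\,df_z(v)\bigr) \qquad \text{for all } v \in T_z \mbf{B}^n.$$
I would derive a contradiction from two structural facts about the two sides of this equation.

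First I would recall that on $\mbf{B}^n$ the infinitesimal Kobayashi and Carath\'{e}odory metrics both coincide with the Bergman metric up to a constant, so at every $z$ the norm $F^K_{\mbf{B}^n}(z,\cdot)$ on $T_z \mbf{B}^n$ is induced by a positive-definite Hermitian inner product; in particular, its square satisfies the real parallelogram identity. Second, I would invoke the product formulas for the infinitesimal Kobayashi and Carath\'{e}odory metrics on a product of hyperbolic domains:
$$F^K_D\bigl((w_1,\ldots,w_m),(v_1,\ldots,v_m)\bigr) \;=\; \max_{1 \leq i \leq m} F^K_{\mbf{B}^{n_i}}(w_i, v_i),$$
and the analogous identity for $F^C_D$. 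The Kobayashi case is classical, and the Carath\'{e}odory case is obtained by combining the projection $D \ra \mbf{B}^{n_i}$ with a freeze-one-variable argument on holomorphic maps $D \ra \Delta$.

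The crucial step is then a one-point computation. Write $w = f(z) = (w_1,\ldots,w_m)$, pick two indices $i \neq j$, and choose nonzero tangent vectors $a_i \in T_{w_i} \mbf{B}^{n_i}$ and $a_j \in T_{w_j} \mbf{B}^{n_j}$ normalised to have equal $F^K$-length $c > 0$ in their respective factors. Let $\xi \in T_w D$ be the vector with $a_i$ in the $i$-th slot and zeros elsewhere, and $\eta \in T_w D$ the corresponding vector built from $a_j$. The max formula immediately gives
$$F^K_D(w,\xi) = F^K_D(w,\eta) = F^K_D(w,\xi+\eta) = F^K_D(w,\xi-\eta) = c,$$
so $(F^K_D(w,\cdot))^2$ fails the parallelogram identity on the pair $(\xi,\eta)$ since $2c^2 \neq 4c^2$. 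Pulling $\xi$ and $\eta$ back to $T_z \mbf{B}^n$ through the linear isomorphism $df_z^{-1}$ produces two vectors there on which $(F^K_{\mbf{B}^n}(z,\cdot))^2$ must, by the isometry relation, also violate the parallelogram identity -- contradicting the Hermitian character of that norm. The same reasoning settles the Carath\'{e}odory case verbatim.

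The main obstacle I anticipate is not the parallelogram calculation, which is linear algebra at one point, but the clean formulation and justification of the product formula for the infinitesimal Carath\'{e}odory metric on $D$; the Kobayashi counterpart is standard but the Carath\'{e}odory one requires a small separate argument. Once both product formulas are in place, the proof needs neither Myers--Steenrod nor any boundary-extension machinery: the non-Hermitian nature of $F^K_D$ at each interior point, transported back through $df$, is already incompatible with the Hermitian infinitesimal metric on $\mbf{B}^n$.
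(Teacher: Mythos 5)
Your proof is correct, but it takes a genuinely different and more elementary route than the paper. The paper's proof restricts the isometry (taken in the direction from the product to the ball) to the coordinate complex lines in each factor, checks that these are complex geodesics via the product formula for the Kobayashi \emph{distance}, runs the Myers--Steenrod regularity argument and the constant-holomorphic-sectional-curvature $-4$ computation from Theorem \ref{0.1} to show each such restriction is (anti)holomorphic, and then uses connectedness, $C^1$-smoothness and Hartogs-type separate analyticity to conclude that $f$ would be a biholomorphism of $\mbf{B}^{n_1}\times\cdots\times\mbf{B}^{n_m}$ onto $\mbf{B}^n$, which is impossible. Your argument instead extracts a purely infinitesimal obstruction at a single point: $F^K_{\mbf{B}^n}(z,\cdot)$ is a Hermitian norm and hence satisfies the parallelogram identity, while the product formula $F^K_D(w,\cdot)=\max_i F^K_{\mbf{B}^{n_i}}(w_i,\cdot)$ forces a violation of that identity on the pair $(\xi,\eta)$ you construct (this is where $m\geq 2$ enters), and no real-linear isomorphism $df_z$ can intertwine the two; your computation $2c^2\neq 4c^2$ is correct. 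What the paper's route buys is the structural statement that a $C^1$ isometry would have to be biholomorphic, which is the template reused for Theorem \ref{0.3} and for the endgame of Theorem \ref{2}; what your route buys is brevity, the need for differentiability at only one point rather than $C^1$-smoothness of $f$, and immediate generalization to Theorem \ref{0.3} and to any product of $m\geq 2$ hyperbolic factors versus any domain whose infinitesimal Kobayashi metric is Riemannian somewhere. On the one issue you flag: the product property for the infinitesimal Carath\'{e}odory metric can be sidestepped entirely here, since a product of balls is convex, so $F^C_D=F^K_D$ by Lempert's theorem; alternatively, the easy inequality $F^C_D\geq\max_i F^C_{\mbf{B}^{n_i}}=\max_i F^K_{\mbf{B}^{n_i}}=F^K_D$ combined with $F^C_D\leq F^K_D$ already yields the max formula for $F^C_D$, so no freeze-one-variable argument is needed.
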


\noindent A similar situation as in the above theorem was
considered in Proposition 2.2.8 of \cite{Jarnicki&Pflug} (see also
\cite{Kuczumow&Ray-1988}); the emphasis here being a different
approach which is valid in a more general context and one example
is provided by the following:

\begin{thm} \label{0.3}
There is no $ C^1$-Kobayashi or Carath\'{e}odory isometry between
$ \mathbf{B}^n $ and the product of $ m $ domains  $ D_1 \times
D_2 \times \ldots \times D_m $ for any $ 2 \leq m \leq n  $ where
each $ D_i $ is a bounded strongly convex domain in $
\mathbf{C}^{n_i}$ with $C^6 $-smooth boundary and $ n = n_1 + n_2
+ \ldots + n_m $.
\end{thm}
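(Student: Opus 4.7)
The plan is to work infinitesimally. By the definition of $C^1$-Kobayashi isometry given in the paper, at every $p \in \mathbf{B}^n$ the real-linear differential $df_p : T_p\mathbf{B}^n \to T_{f(p)}(D_1 \times \ldots \times D_m)$ must send the indicatrix (unit ball) of $F^K_{\mathbf{B}^n}$ at $p$ onto the indicatrix of $F^K_{D_1 \times \ldots \times D_m}$ at $f(p)$. Because a real-linear image of a real ellipsoid is again a real ellipsoid, it suffices to show that these two convex indicatrices are not linearly equivalent; concretely, that the product-side indicatrix is not a real ellipsoid.

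On the ball side, $F^K_{\mathbf{B}^n}$ coincides (up to a constant) with the K\"{a}hler--Bergman metric, which is Hermitian, so its indicatrix at every $p$ is a real ellipsoid in $T_p\mathbf{B}^n \cong \mathbf{R}^{2n}$. On the product side, the product formula for the Kobayashi--Royden metric gives
\[
F^K_{D_1 \times \ldots \times D_m}\bigl((z_1,\ldots,z_m),\,(v_1,\ldots,v_m)\bigr) = \max_{1 \le i \le m} F^K_{D_i}(z_i, v_i),
\]
so the indicatrix at $(q_1, \ldots, q_m) = f(p)$ splits as the box product $K_1 \times \ldots \times K_m$ of the individual indicatrices $K_i \subset T_{q_i}D_i$. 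Each $K_i$ is a bounded convex body with $0$ in its interior, since $D_i$ is a bounded domain and hence $F^K_{D_i}(q_i,\cdot)$ is positive on nonzero vectors.

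The crux is then an elementary fact: for $m \geq 2$ and positive-dimensional factors, such a box product is never a real ellipsoid. Indeed, if $K_1 \times \ldots \times K_m = \{v : Q(v) \leq 1\}$ for a positive definite quadratic form $Q$, then intersecting with each direct summand forces $K_i = \{Q_i \leq 1\}$ to be an ellipsoid, where $Q_i(v_i) := Q(0,\ldots,v_i,\ldots,0)$. For any $x_1 \in \partial K_1$ and any nonzero $x_2 \in K_2$, both $(x_1, \pm x_2, 0, \ldots, 0)$ lie in the box product and hence in $\{Q \leq 1\}$; averaging these two inequalities kills the cross terms and yields $Q_1(x_1) + Q_2(x_2) \leq 1$, i.e., $1 + Q_2(x_2) \leq 1$, contradicting positive definiteness of $Q_2$. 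This rules out the Kobayashi isometry.

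For the Carath\'{e}odory statement, the strong convexity together with $C^6$-smoothness of each $D_i$ invokes Lempert's theorem to give $F^C_{D_i} = F^K_{D_i}$ on each $D_i$; the analogous equality on $\mathbf{B}^n$ is classical. Since the infinitesimal Carath\'{e}odory metric of a product is also the maximum of the factor metrics, the indicatrices on both sides agree with their Kobayashi counterparts, and the argument above applies verbatim. I expect the only nontrivial technical points to be the careful verification of these two product formulas and the use of Lempert's equality at the infinitesimal level under the stated boundary regularity -- both are standard but essential inputs from the theory of invariant metrics on strongly convex domains.
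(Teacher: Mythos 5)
Your proposal is correct, and it takes a genuinely different and substantially more elementary route than the paper. The paper restricts $f$ to the slices $z \mapsto f(z, a_2, \ldots, a_m)$, shows each slice is distance preserving into $\mathbf{B}^n$, and then runs a regularity-and-rigidity chain (Lipschitz equivalence of $d_{D_1}$ with the Euclidean distance, Rademacher--Stepanov, Lempert's $C^1$-regularity of $F^K_{D_1}$, Myers--Steenrod, and the constant holomorphic sectional curvature $-4$ of the ball) to conclude each slice is (anti)holomorphic, finishing with Herv\'{e}'s separate-analyticity theorem and the nonexistence of a biholomorphism from a product onto $\mathbf{B}^n$; this is where the $C^6$ strong convexity is consumed. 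You instead work at a single point: since the paper defines a $C^1$-Kobayashi isometry by $f^*(F^K_{D_2}) = F^K_{D_1}$, the differential is a real-linear isomorphism carrying one indicatrix onto the other, the ball's indicatrix is a real ellipsoid (the metric is Hermitian), linear images of ellipsoids are ellipsoids, and the product indicatrix is a box product $K_1 \times \cdots \times K_m$ which your averaging argument correctly shows can never be an ellipsoid (note your argument does not even need convexity of the $K_i$, only that they are balanced with $0$ interior and that restricting $Q$ to each summand recovers $K_i$). What this buys is considerable: no regularity theory at all, and in the Kobayashi case your proof needs only that each $D_i$ is bounded (hyperbolic), so it proves a strictly stronger statement than Theorem \ref{0.3}; the $C^6$ strong convexity is used only in your Carath\'{e}odory half, and even there you could simplify by noting that $D_1 \times \cdots \times D_m$ is itself a bounded convex domain, so Lempert's equality $F^C = F^K$ (the same fact the paper cites to close its Carath\'{e}odory case) applies directly to the product rather than factor by factor. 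The one caveat worth recording is that your argument leans on the infinitesimal definition of $C^1$-isometry adopted in the paper; if one instead took ``$C^1$ and distance preserving'' as the hypothesis, one would need the standard intermediate step that the integrated identity forces $f^*(F^K_{D_2}) = F^K_{D_1}$ (via Royden's theorem that $d_D$ integrates $F^K_D$ and the continuity of these metrics on taut domains) --- but the paper's stated definition makes this a non-issue.
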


\noindent While the proof of Theorem \ref{0.2} is along the same
lines as that of Theorem \ref{0.1}, the proof of Theorem \ref{0.3}
requires the existence of complex geodesics and certain degree of
smoothness of the Kobayashi metric and hence we restrict to $
C^6$-smooth strongly convex domains. The proofs of Theorem
\ref{0.1}, \ref{0.2} and \ref{0.3} are contained in section 3. It
must be mentioned that the above results are motivated by the well
known fact that there does not exist a proper holomorphic mapping
from a product domain onto $ \mathbf{B}^n $ for any $n > 1 $.

\medskip

\noindent This article also considers the question of continuous
extendability up to the boundary of continuous isometries between
domains in $ \mathbf{C}^n $. Here is a prototype statement that
can be proved.

\begin{thm} \label{1}
Let $ f : D_1 \rightarrow D_2 $ be a continuous Kobayashi isometry
between two bounded domains in $ \mathbf{C}^2$. Let $ p^0 $ and $
q^0 $ be points on $ \partial D_1 $ and $ \partial D_2 $
respectively. Assume that $ \partial D_1 $ is $
{C}^{\infty}$-smooth weakly pseudoconvex of finite type near $ p^0
$ and that $ \partial D_2 $ is $ {C}^2$-smooth strongly
pseudoconvex in a neighbourhood $ U_2 $ of $ q^0 $. Suppose that $
q^0 $ belongs to the cluster set of $p^0 $ under $f$. Then $f$
extends as a continuous mapping to a neighbourhood of $ p^0 $ in $
\overline{D}_1 $.
\end{thm}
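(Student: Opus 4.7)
The plan is to show that the cluster set
\[
C(p^0, f) \;:=\; \{q \in \overline{D}_2 : q = \lim_k f(p_k) \text{ for some } p_k \to p^0 \text{ in } D_1\}
\]
equals $\{q^0\}$. Since $f$ is a continuous isometry between two Kobayashi-hyperbolic bounded domains, it is a homeomorphism of metric spaces, and therefore $C(p^0, f) \subset \partial D_2$ automatically (no cluster value can lie in $D_2$). Once $C(p^0, f) = \{q^0\}$ is established, the same argument applied at boundary points $p \in \partial D_1$ near $p^0$ --- whose $f$-cluster sets are pinned to a small neighborhood of $q^0$ by mixing sequences with the ones approaching $p^0$ --- yields the continuous extension of $f$ on a full neighborhood of $p^0$ in $\overline{D}_1$.

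The mechanism would be the Gromov product at a fixed reference point $z_0 \in D_1$,
\[
(z|w)_{z_0} \;=\; \tfrac{1}{2}\bigl(d_{D_1}(z,z_0) + d_{D_1}(w,z_0) - d_{D_1}(z,w)\bigr),
\]
with the parallel definition on $D_2$; the isometric identity $(p_k|p_k')_{z_0} = (f(p_k)|f(p_k'))_{f(z_0)}$ will be the seat of the contradiction. Assume, to the contrary, that $p_k \to p^0$ with $f(p_k) \to q^0$ while some $p_k' \to p^0$ has $f(p_k') \to q' \in \partial D_2$ with $q' \neq q^0$. The $C^2$-smooth strongly pseudoconvex hypothesis at $q^0$ provides a local holomorphic peak function $h_2$ built from the Levi polynomial; together with Graham / Balogh--Bonk-type asymptotics of the Kobayashi distance near $q^0$, one shows that any geodesic from $f(p_k)$ to $f(p_k')$ must detour through a compact subset of $D_2$, whence $(f(p_k)|f(p_k'))_{f(z_0)}$ stays bounded.

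On the $D_1$ side, the Bedford--Fornaess theorem furnishes a local holomorphic peak function at the smooth pseudoconvex finite-type point $p^0 \in \partial D_1 \subset \mathbf{C}^2$, which localizes $d_{D_1}$ near $p^0$. Catlin's anisotropic estimates of the infinitesimal Kobayashi metric at a finite-type point in $\mathbf{C}^2$ then allow one to construct short analytic discs in $D_1 \cap U_1$ through $p_k$ and $p_k'$; controlling the lengths of these discs yields an upper bound on $d_{D_1}(p_k, p_k')$ that is strictly smaller than $d_{D_1}(p_k,z_0) + d_{D_1}(p_k',z_0)$ by an amount diverging as $p_k, p_k' \to p^0$. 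Hence $(p_k|p_k')_{z_0} \to \infty$, contradicting the boundedness on the $D_2$ side via the isometric identity and forcing $C(p^0,f) = \{q^0\}$.

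The main obstacle is precisely the $D_1$-side Gromov-product divergence: since the Kobayashi metric near a finite-type point of type $m$ scales like $\delta^{-1}$ normally but only like $\delta^{-1/m}$ tangentially, the analytic discs used to bound $d_{D_1}(p_k, p_k')$ from above must be adapted to this anisotropy and to all possible approach directions of $p_k'$ toward $p^0$. By contrast, the $D_2$-side estimate is classical, since strong pseudoconvexity permits the peak function $h_2$ to supply an isotropic barrier directly.
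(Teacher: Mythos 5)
Your strategy is, at bottom, the same as the paper's: the Gromov-product identity $(p_k|p_k')_{z_0}=(f(p_k)|f(p_k'))_{f(z_0)}$ is exactly the paper's combination of the four inequalities it labels (4.4)--(4.7) (a Forstneric--Rosay upper bound for $d_{D_1}(p^j,t^j)$, the lower bound of Proposition \ref{S} for $d_{D_1}(\cdot,z_0)$, and the corresponding reversed estimates on the $D_2$ side), so the skeleton is sound. However, there is a genuine gap in your $D_2$-side argument, and it is precisely the point the theorem's ``purely local assumptions'' make delicate. You take $p_k'\to p^0$ with $f(p_k')\to q'\neq q^0$ and then invoke strongly pseudoconvex asymptotics to show the Gromov product $(f(p_k)|f(p_k'))_{f(z_0)}$ stays bounded. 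But every estimate you need there --- the upper bound $d_{D_2}(f(p_k'),f(z_0))\leq -\tfrac12\log d(f(p_k'),\partial D_2)+C$ and the two-point lower bound $d_{D_2}(f(p_k),f(p_k'))\geq -\tfrac12\log d(f(p_k),\partial D_2)-\tfrac12\log d(f(p_k'),\partial D_2)-C$ --- requires boundary regularity \emph{near $q'$}, and the hypotheses say nothing about $\partial D_2$ outside the neighbourhood $U_2$ of $q^0$; $q'$ could sit on a cusp or a totally irregular piece of $\partial D_2$ where both estimates fail. The paper's missing device is a path-lifting/intermediate-value trick: join $p^j$ to $s^j$ by explicit polygonal arcs $\gamma^j$ shrinking to $p^0$, push them forward by $f$, and select a point $u^j=f(t^j)$ on the image arc lying in $B(q^0,\epsilon)\cap U_2$ at definite distance from $q^0$. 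This replaces the uncontrolled $q'$ by a limit $u^0\in U_2\cap\partial D_2$, $u^0\neq q^0$, where strong pseudoconvexity \emph{is} available, while guaranteeing $t^j\to p^0$ so the $D_1$-side estimates still apply. Without this (or an equivalent localization) your $D_2$-side boundedness claim is unjustified.

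A secondary remark: you have located the difficulty in the wrong place on the $D_1$ side. No anisotropic analytic discs are needed for the upper bound on $d_{D_1}(p_k,p_k')$; the crude isotropic Forstneric--Rosay estimate
\[
d_{D_1}(p,t)\leq -\tfrac12\log d(p,\partial D_1)-\tfrac12\log d(t,\partial D_1)+\tfrac12\log\bigl(d(p,\partial D_1)+|p-t|\bigr)+\tfrac12\log\bigl(d(t,\partial D_1)+|p-t|\bigr)+C
\]
already makes the Gromov product at $z_0$ diverge, since the two correction terms tend to $-\infty$ as $p,t\to p^0$. Where the finite-type structure genuinely enters is the \emph{lower} bound $d_{D_1}(p,z_0)\geq -\tfrac12\log d(p,\partial D_1)-C$, which the paper gets from Berteloot's estimate $F^K_{D_1}(z,v)\gtrsim |v_N|/d(z,\partial D_1)$ rather than from a peak function; your Bedford--Fornaess route could in principle substitute here, but you would still need to extract the precise logarithmic rate from the peak function, which is an additional step you have not supplied.
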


\noindent It should be noted that there are only purely local
assumptions on $ D_1 $ and $ D_2 $ -- in particular, the domains
are not assumed to be pseudoconvex away from $ p^0 $ and $ q^0 $
and there are no global smoothness assumptions on the boundaries.
The above result is proved using the global estimates on the
Kobayashi metric near weakly pseudoconvex boundary points of
finite type from \cite{Herbort-2005}. This is done in Proposition
\ref{S} and Proposition \ref{Q}. It is worthwhile mentioning that
other relevant theorems of this nature for proper holomorphic
mappings between strongly pseudoconvex domains were proved by
Forstneric and Rosay (\cite{Forstneric&Rosay-1987}) using global
estimates on the Kobayashi metric. As an application of Theorem
\ref{1} we get:

\begin{thm} \label{2}
Let $ D_1$ and $ D_2$ be bounded domains in $ \mathbf{C}^2$. Let $
p^0 = (0,0) $ and $ q^0 $ be points on $ \partial D_1 $ and $
\partial D_2 $ respectively. Assume that $ \partial D_1 $ in a neighbourhood $ U_1
$ of the origin is defined by
\[
\big \{ (z_1, z_2) \in \mathbf{C}^2 : 2 \Re z_2 + | z_1 |^{2m} +
o( |z_1|^{2m} + \Im z_2 ) < 0 \big \}
\]
where $ m > 1 $ is  a positive integer and that $ \partial D_2 $
is $ {C}^2$-smooth strongly pseudoconvex in a neighbourhood $ U_2
$ of a point $ q^0 \in \partial D_2 $. Then there cannot be a
continuous Kobayashi isometry $f$ from $ D_1$ onto $D_2$ with the
property that $ q^0 $ belongs to the cluster set of $ p^0$ under
$f$.
\end{thm}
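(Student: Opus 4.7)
By hypothesis, $\pa D_1$ is $C^{\infty}$-smooth, weakly pseudoconvex, and of finite type (of type $2m$) near $p^0=(0,0)$, while $\pa D_2$ is $C^2$-smooth and strongly pseudoconvex near $q^0$. Theorem~\ref{1} therefore supplies a continuous extension $\ti f$ of $f$ to a neighbourhood of $p^0$ in $\ov{D}_1$ with $\ti f(p^0)=q^0$. I would then proceed by a boundary-adapted scaling argument that reduces the statement to the nonexistence of a continuous Kobayashi isometry between explicit model domains.

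On the $D_1$-side, introduce the anisotropic dilations $T_t(z_1,z_2)=(t^{-1/(2m)}z_1,\,t^{-1}z_2)$; as $t\to 0^+$ the sets $T_t(D_1\cap U_1)$ converge in the Hausdorff sense to the model domain $E_m:=\{2\Re z_2+|z_1|^{2m}<0\}$, with compatible convergence of Kobayashi distances on compacta. On the $D_2$-side, choose local holomorphic coordinates $(w_1,w_2)$ near $q^0$ in which $\pa D_2$ has the form $\{2\Re w_2+|w_1|^2+\text{h.o.t.}=0\}$, and use isotropic Pinchuk-type dilations $S_t$ centered at $q_t:=f((0,-t))$ with $S_t(q_t)=(0,-1)$; the rescaled domains converge to the Siegel half-space $\{2\Re w_2+|w_1|^2<0\}$, which is biholomorphic to $\mbf{B}^2$. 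Graham's boundary distance asymptotic on $D_2$ together with the finite-type estimate from \cite{Herbort-2005} on $D_1$ yields $d_{D_j}(\,\cdot\,,\,\cdot_0)=-\tfrac12\log\delta_{D_j}(\,\cdot\,)+O(1)$ on both sides, so the isometry condition forces $\delta_{D_2}(q_t)\asymp t$ and the two scales are matched.

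The rescaled maps $F_t:=S_t\circ f\circ T_t^{-1}$ are then continuous Kobayashi isometries between increasing exhausting subsets of $E_m$ and $\mbf{B}^2$ sending $(0,-1)$ to $(0,-1)$. By the stability of the Kobayashi metric under convergence of domains, combined with the uniform equivalence of the Kobayashi and Euclidean metrics on compact subsets of the limit domains, the family $\{F_t\}$ is equicontinuous on compacta; after extracting a subsequence it converges locally uniformly to a continuous Kobayashi isometry $F:E_m\to\mbf{B}^2$.

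The contradiction then comes from showing that no such $F$ can exist when $m>1$. The boundary point $(0,0)\in\pa E_m$ is weakly pseudoconvex of type $2m>2$, whereas every boundary point of $\mbf{B}^2$ is strongly pseudoconvex; this asymmetry has a purely metric manifestation in the anisotropic shape of Kobayashi balls $B_{E_m}((0,-t),R)$, whose tangential and normal Euclidean extents scale as $t^{1/(2m)}$ and $t$, as opposed to the $t^{1/2}$ and $t$ appearing in $\mbf{B}^2$. Since $\mbf{B}^2$ is homogeneous under $\mathrm{Aut}(\mbf{B}^2)$ and $E_m$ is not, the mismatched exponents $1/(2m)\neq 1/2$ obstruct $F$ from being a bijective isometry. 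The hardest step is exactly this last one---extracting the metric obstruction without assuming any regularity of $F$---together with the rigorous passage $F_t\to F$; the Greene--Krantz-type stability of the Kobayashi metric and the continuous boundary extension afforded by Theorem~\ref{1} are the essential technical ingredients.
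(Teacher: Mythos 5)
Your outline matches the paper's strategy for the first four-fifths of the argument: continuous extension at $p^0$ via Theorem~\ref{1}, anisotropic scaling of $D_1$ along the inner normal to the model $E_m=\{2\Re z_2+|z_1|^{2m}<0\}$, Pinchuk scaling of $D_2$ to the Siegel domain, stability of the Kobayashi distance under both scalings, equicontinuity of the rescaled isometries, and extraction of a limit which is a continuous surjective Kobayashi isometry $F$ from $E_m$ onto the unbounded realization of $\mathbf{B}^2$. (Two small cautions here: the scaling on the $D_2$-side is also anisotropic, $(z_1,z_2)\mapsto(z_1/\epsilon_j^{1/2},z_2/\epsilon_j)$; and the rescaled maps are isometries between the full rescaled domains, not between subsets --- restrictions of Kobayashi isometries to subdomains are \emph{not} isometries for the subdomain metric, which is exactly why the equicontinuity step needs the comparison lemma for $d_{D'}$ versus $d_D$ on large Kobayashi balls.)

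The genuine gap is your final step. You assert that the mismatch between the tangential scaling exponents $t^{1/(2m)}$ and $t^{1/2}$ of Kobayashi balls near the two boundaries ``obstructs $F$ from being a bijective isometry,'' but this is not an argument: $F$ is only assumed continuous, it carries no a priori regularity, and the anisotropic Euclidean shape of Kobayashi balls near the boundary is not in any evident way an invariant that a purely metric isometry between the two \emph{interior} metric spaces must preserve. Indeed the entire difficulty of the problem is that one cannot read off boundary geometry from an abstract distance-preserving bijection without first establishing regularity. The paper closes this gap by a completely different mechanism: it shows that $d_{E_m}$ is locally Lipschitz-equivalent to the Euclidean distance, so $F$ is differentiable a.e.\ by Rademacher--Stepanov; it invokes Ma's result that $F^K_{E_m}$ is $C^1$ on $E_m\times(\mathbf{C}^2\setminus\{0\})$ and deduces that $F^K_{E_m}$ is Riemannian; Myers--Steenrod then upgrades $F$ to a $C^1$ isometry of Riemannian manifolds, and the constant holomorphic sectional curvature $-4$ of the ball forces $F$ to be holomorphic or anti-holomorphic. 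Hence $E_m\simeq\mathbf{B}^2$, and since $E_m$ is biholomorphic to the circular ellipsoid $\{|z_1|^{2m}+|z_2|^2<1\}$, an origin-preserving biholomorphism between circular domains is linear, which forces $2m=2$ and contradicts $m>1$. Without some version of this regularity-plus-rigidity chain (or a genuinely worked-out metric invariant distinguishing $E_m$ from $\mathbf{B}^2$), your proof does not close.
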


\noindent Theorem \ref{2} dispenses with the assumption of having
a global biholomorphic mapping and replaces it with a global
Kobayashi isometry at the expense of restricting to certain
classes of weakly pseudoconvex finite type domains in $
\mathbf{C}^2 $. A particularly useful strategy to investigate this
type of results in the holomorphic category has been Pinchuk's
scaling technique (cf. \cite{Pinchuk-1991}). Scaling $ D_1$ near $
p^0 $ with respect to a sequence of points that converges to $ p^0
$ along the inner normal yields a limit domain of the form
\[
D_{1, \infty} = \Big \{ (z_1, z_2) \in \mathbf{C}^2 : 2 \Re z_2 +
|z_1|^{2m} < 0 \Big \}
\]
for which the Kobayashi metric has some smoothness
(\cite{Ma-1995}). It is for this reason that we restrict attention
to domains with a defining function as described in Theorem
\ref{2}. In trying to adapt the scaling methods in our situation,
the `normality' of the scaled isometries needs to be established.
This requires the stability of the integrated Kobayashi distance
under scaling of a given strongly pseudoconvex domain (this was
done in \cite{Seshadri&Verma-2006}) and a weakly pseudoconvex
finite type domain in $ \mathbf{C}^2 $ -- this was developed in
\cite{Mahajan&Verma} for a different application and we intend to
use it here as well. The conclusion then would be that the limit
of scaled isometries exists and yields a continuous isometry
between the corresponding model domains, i.e., the ellipsoid $
D_{1, \infty} $ and the ball $ \mathbf{B}^2 $. Another difficulty
is that unlike the holomorphic case the restrictions of Kobayashi
isometries to subdomains are not isometries with respect to the
Kobayashi metric of the subdomain. The end game lies in showing
that the continuous isometry is holomorphic and this is done using
the techniques employed in the proof of Theorem \ref{0.1} --
\ref{0.3}.

\medskip

\noindent Several other statements about the continuous
extendability of continuous isometries are possible -- these
relate to isometries between either a pair of strongly
pseudoconvex domains in $ \mathbf{C}^n $ or between a pair of
weakly pseudoconvex domains of finite type in $ \mathbf{C}^2 $.
These have been stated (cf. Theorem \ref{W} and Theorem \ref{L})
and elaborated upon towards the end of section 4. These are valid
for isometries of the inner Carath\'{e}odory distance as well (cf.
Theorem \ref{U}).

\medskip

\noindent The author wishes to thank Prof. Kaushal Verma for his
encouragement and for many useful discussions throughout the
course of this work. Special thanks are also due to Prof. W.
Zwonek who provided valuable feedback after this article was put
on the math arxiv. This theme of exploring isometries was also
considered by him in \cite{Zwonek1-1993}, \cite{Zwonek2-1993} and
\cite{Zwonek-1995} and his unpublished thesis (\cite{Zwonek4}) and
our theorems \ref{0.1}, \ref{0.2} and \ref{0.3} and other more
general statements within this paradigm were considered by him in
these papers using very different methods.

%%%%%%%%%%%%%%%%%%%%%%%%%%%%%%%%%%%%%%%%%%%%%%%%%%%%%%%%%%

\section{Notation and Terminology}

\noindent Let $\Delta$ denote the open unit disc in the complex
plane and let $d_{hyp}(a, b)$ denote the distance between two
points $a, b \in \Delta $ with respect to the hyperbolic metric.
For $ r > 0$, $ \Delta(0, r) \subset \mathbf{C} $ will be the disc
of radius $r$ around the origin and $B(z, \delta) \subset
\mathbf{C}^n$ will be the Euclidean ball of radius $\delta > 0$
around $ z $. Let $ X $ be a complex manifold of dimension $n$.
The Kobayashi and the Carath\'{e}odory distances on $X$, denoted
by $d_X$ and $c_X$ respectively, are defined as follows:

\medskip

\no Let $z \in X$ and fix $\xi$ a holomorphic tangent vector at
$z$. Define the associated infinitesimal Carath\'{e}odory and
Kobayashi metrics as
\begin{eqnarray*} F^C_X(z,\xi) & = & \sup \{ | df(z) \xi |
: f \in \mathcal{O}(X, \Delta) \} \\
\mbox{and} \qquad F^K_X(z,\xi) & = & \inf \left \lbrace
\frac{1}{\alpha} : \alpha > 0, f \in \mathcal{O}(\Delta, X) \
\mbox{with} \ f(0) = z, f'(0) = \alpha \xi \right \rbrace
\end{eqnarray*}
respectively. This induces a concept of length of a path. If
$\gamma : [0, 1] \rightarrow X$ is a piecewise smooth path, then
the Carath\'{e}odory length is given by
\[
L_X^C(\gamma) = \int _0^1 F^C_X( \gamma(t),\dot{\gamma}(t) ) \;
dt,
\]
and this in turn induces the associated inner Carath\'{e}odory
distance, namely
\[
c^i_X(p,q) = \inf L_X^C(\gamma)
\]
where the infimum is taken over all piecewise smooth curves
$\gamma$ in $X$ joining $p$ to $q$. Likewise, the Kobayashi length
of a piecewise $\mathcal{C}^1$-curve $\gamma : [0, 1] \rightarrow
X$ is given by
\[
L_X^K(\gamma) = \int _0^1 F^K_X( \gamma(t),\dot{\gamma}(t) ) \;
dt,
\]
and finally the Kobayashi distance between $p, q \in X$ is defined
as
\[
d_X(p,q) = \inf L_X(\gamma)
\]
where the infimum is taken over all piecewise differentiable
curves $\gamma$ in $X$ joining $p$ to $q$. Recall that $X$ is taut
if $\cal O(\Delta, X)$ is a normal family.

\medskip

\no The Carath\'{e}odory distance $c_X$ between $p, q \in X$ is
defined by setting
\[
c_X (p,q) = \sup_f d_{hyp}\big( f(p),f(q) \big)
\]
where the supremum is taken over the family of all holomorphic
mappings $f : X \ra \Delta$.

\medskip

\noindent A domain $ D \subset \mathbf{C}^n $ with $ C^2$-smooth
boundary is said to be \textit{strongly convex} if there is a
defining function $ \rho $ for $ \partial D $ such that the real
Hessian of $ \rho $ is positive definite as a bilinear form on $
T_p (\partial D) $ for every $ p \in \partial D $.

\medskip

\noindent Let $ D \subset \mathbf{C}^n $ be a bounded domain. A
holomorphic mapping $ \phi : \Delta \rightarrow D $ is said to be
an \textit{extremal disc} or a \textit{complex geodesic} for the
Kobayashi distance if it is distance preserving, i.e., $ d_D \big(
\phi(p), \phi(q) \big) = d_{\Delta}(p, q) $ for all $ p, q $ in $
\Delta $.
%%%%%%%%%%%%%%%%%%%%%%%%%%%%%%%%%%%%%%%%%%%%%%%%%%%%%%%%%%%%%%%%%%%%%%%

\section{Isometries versus biholomorphisms}

\noindent \textit{Proof of Theorem \ref{0.1}:} Suppose there
exists a $C^1 $-isometry $f : \big( \Delta^n, d_{\Delta^n} \big)
\rightarrow \big( \mathbf{B}^n, d_{\mathbf{B}^n} \big) $. Consider
the restriction of $f$ to lines in $ \Delta^n $ parallel the
coordinate axes. Such lines are parameterized in the following way
-- For $ ( a_1, \ldots, a_{n-1} ) \in \Delta^{n-1} $ fixed,
consider $ \phi^1_{a_1 \ldots a_{n-1}} : \Delta \rightarrow
\Delta^n $ defined by
\[
\phi^1_{a_1 \ldots a_{n-1}} (z) = (z, a_1, \ldots, a_{n-1})
\]
for $ z$ in $ \Delta $. Then using the explicit form of the
Kobayashi distance on the polydisc, it follows that $ \phi^1_{a_1
\ldots a_{n-1}} $ is distance preserving, i.e.,
\[
d_{\Delta} (z,w) = d_{\Delta^n} \big ( \phi^1_{a_1 \ldots
a_{n-1}}(z), \phi^1_{a_1 \ldots a_{n-1}}(w) \big)
\]
for all $ z, w $ in $ \Delta $. Indeed,
\begin{eqnarray*}
d_{\Delta^n} \big ( \phi^1_{a_1 \ldots a_{n-1}}(z), \phi^1_{a_1
\ldots a_{n-1}}(w) \big) & = & d_{\Delta^n} \big( (z, a_1, \ldots,
a_{n-1}), (w, a_1, \ldots, a_{n-1}) \big) \\
& = & \max \big \lbrace d_{\Delta} (z,w), d_{\Delta}(a_1, a_1),
\ldots, d_{\Delta} ( a_{n-1}, a_{n-1} ) \big \rbrace \\
& = & d_{\Delta} (z,w).
\end{eqnarray*}
Consider the composition $ \tilde{f}^1_{a_1 \ldots a_{n-1}} = f
\circ \phi^1_{a_1 \ldots a_{n-1}} : \Delta \rightarrow
\mathbf{B}^n$. Write $ \tilde{f}^1_{a_1 \ldots a_{n-1}} =
\tilde{f}^1 $ for brevity and note that $ \tilde{f}^1 $ preserves
the Kobayashi distance. More concretely,
\[
d_{\mathbf{B}^n} \big( \tilde{f}^1(z), \tilde{f}^1(w) \big) =
d_{\Delta} (z,w)
\]
for all $z, w \in \Delta $. The proof now divides into two parts.
In the first part, we show that $ \tilde{f}^1 $ is $
C^{\infty}$-smooth. This is done by adapting the proof of the
theorem of Myers-Steenrod. In the second one we prove that $
\tilde{f}^1 $ has to be (anti)-holomorphic using ideas from
\cite{Seshadri&Verma-2009}.

\medskip

\noindent \textbf{Step I:} The mapping $ \tilde{f}^1 $ is $
C^{\infty}$-smooth.

\medskip

\noindent To establish this, let $p$ be an arbitrary point of $
\Delta $ and put $ q = \tilde{f}^1(p) $. Let $ B_r(p)$ and $B_r(q)
$ be spherical normal neighbourhoods of $ p \in \Delta $ and $ q
\in \mathbf{B}^n $ respectively. These follow from the fact that $
F^K_{\Delta} $ and $ F^K_{\mathbf{B}^n } $ are both Riemannian. We
may assume that $ \tilde{f}^1 \big( B_r(p) \big) \subset B_r(q) $.
Consider germs of integral curves through $p$ in all directions.
The goal now is to show that their image under $ \tilde{f}^1 $ are
integral curves through $q$ in all directions. Indeed, for each $
v \in T_p{\Delta} $, consider the geodesic
\[
t \rightarrow \mbox{Exp}_p \; tv, \   -r/ \big(F^K_{\Delta} (p, v)
\big)  < t < r/ \big(F^K_{\Delta} (p, v) \big).
\]
The image $ \gamma(t) = \tilde{f}^1( \mbox{Exp}_p \; tv) $ lies in
$ B_r(q) $ and has the property that
\[
d_{\mathbf{B}^{n}} \big( \gamma(t), \gamma(t') \big) = |t -t'|
F^K_{\Delta} (p, v)
\]
for all $ t, t' $ in the interval of definition. This uses the
fact that $ \tilde{f}^1 $ is distance preserving. To see that $
\gamma $ is a geodesic, we consider the point $ q = \gamma(0) $
and an arbitrary point $ Q$ on trace $ \gamma $. They can be
joined by a unique geodesic $ \sigma $ of length $
d_{\mathbf{B}^n} (q, Q) $. Let $ B_R (Q) $ be a spherical normal
neighbourhood of $ Q$ and let $s$ be any point on trace $ \gamma $
between $q$ and $Q$ such that $ s \in B_R(Q) $. Then $
d_{\mathbf{B}^n} (q,s) + d_{\mathbf{B}^n}(s, Q) =
d_{\mathbf{B}^n}(q, Q) $ by construction. If we join $q$ and $s$
by the shortest distance realizing curve, and then join $s$ and
$Q$ by the shortest distance minimizing curve, we get a piecewise
differentiable curve of length $ d_{\mathbf{B}^n}(q, Q) $. This
curve must coincide with $ \sigma $. Since $ Q$ was arbitrary on
Image($ \gamma $), this proves that $ \gamma $ is a geodesic. In
particular, $\gamma $ is differentiable.

\medskip

\noindent Let $v'$ denote the tangent vector to $ \gamma $ at the
point $q$. Consider the mapping $ g : T_p {\Delta} \rightarrow T_q
{\mathbf{B}^n} $ defined by setting $ g(v) = v' $. Observe that $
F^K_{\Delta} (p, v) = F^K_{\mathbf{B}^n} \big(q, g(v) \big) $ and
$ g(\alpha v) = \alpha g(v) $ for $ \alpha \in \mathbf{R} $ and $
v \in T_p{\Delta} $. Now, let $ v^1, v^2 \in T_p{\Delta} $ and
choose $ \rho $ such that $ F^K_{\Delta}(p, \rho v^1 ) $ and $
F^K_{\Delta}(p, \rho v^2 ) $ are both less than $r$. Let $ v^1_t =
\mbox{Exp}_p \; tv^1 $ and $ v^2_t = \mbox{Exp}_p \; tv^2 $ for $
0 \leq t \leq \rho $. Let $ F_{\Delta}^K $ and $
F_{\mathbf{B}^n}^K $ be the quadratic forms associated to
Riemannian metrics $ \langle \cdot, \cdot \rangle _p $ at $ p \in
\Delta $ and $ \langle \cdot, \cdot \rangle _q $ at $ q \in
\mathbf{B}^n $ respectively. The choice of the metric will be
clear from the context thereby avoiding ambiguities due to the
same notation used. Then from \cite{Helgason}, we see that
\[ \displaystyle\lim_{ (v_t^1, v_t^2) \rightarrow (p,p) }
\frac {F^K_{\Delta} (p, tv^1 - tv^2)}{d_{\Delta} (v^1_t, v^2_t) }
= 1
\]
so that
\[
\frac{2 \langle v^1, v^2 \rangle _p}{F^K_{\Delta} (p,
v^1)F^K_{\Delta} (p, v^2)} = \frac{ \big (F^K_{\Delta} (p, v^1)
\big) ^2 + \big( F^K_{\Delta} (p, v^2) \big)^2 }{F^K_{\Delta} (p,
v^1)F^K_{\Delta} (p, v^2)} - \frac{\big( F^K_{\Delta} (p, tv^1 - t
v^2 ) \big) ^2 }{F^K_{\Delta} (p, tv^1)F^K_{\Delta} (p, tv^2)}
\]
\[
=  \frac{ \big (F^K_{\Delta} (p, v^1) \big) ^2 + \big(
F^K_{\Delta} (p, v^2) \big)^2 }{F^K_{\Delta} (p, v^1)F^K_{\Delta}
(p, v^2)} - \displaystyle\lim_{ t \rightarrow 0} \frac
{{d_{\Delta} (v^1_t, v^2_t) }^2 }{F^K_{\Delta} (p,
tv^1)F^K_{\Delta} (p, tv^2)}.
\]
Since the right hand side is preserved by the mapping $
\tilde{f}^1 $, it follows that
\[
\langle v^1, v^2 \rangle _p = \langle g(v^1), g(v^2) \rangle_q
\]
But $ v^1 + v^2 $ is determined by the quantities $ F^K_{\Delta}
(p, v^1), F^K_{\Delta} (p, v^2) $ and $ \langle v^1, v^2 \rangle
_p $, all of which are preserved by $ g $. It follows that $ g(v^1
+ v^2) = g(v^1) + g(v^2) $ which together with the previous
properties of $ g $ shows that it is a diffeomorphism of $
T_p{\Delta} $ onto $ T_q {\mathbf{B}^n} $. On $ B_r(p) $ we have
\[
\tilde{f}^1 = \mbox{Exp}_q \circ g \circ \mbox{Exp}^{-1}_p.
\]
This exactly means that the mapping $ \tilde{f}^1 $ is linear in
exponential coordinates. Since the exponential map is smooth, $
\tilde{f}^1 $ is smooth.

\medskip

\noindent \textbf{Step II:} $ \tilde{f}^1 $ is
holomorphic/anti-holomorphic.

\medskip

\noindent Let $ J_0 $ and $ J$ denote the almost complex
structures on $ T \mathbf{B}^n $ and $ T \Delta $ respectively. It
suffices to prove that $ d \tilde{f}^1 \circ J = \pm J_0 \circ d
\tilde{f}^1 $. To do this, fix $p \in \Delta $ and let $ S_0 $ and
$ S$ denote the set of complex lines i.e. $2$-planes invariant
under $ J_0 $ and $J$ respectively. We claim that $ J $ invariant
$ 2$-planes go to $ J_0 $ invariant $2$-planes under $ d
\tilde{f}^1 $. First note that since $ \tilde{f}^1 $ is smooth,
the sectional curvature of $ \tilde{f}^1 (\Delta) $ with respect
to the metric induced by $ \tilde{f}^1 $ is equal to that of $
\Delta $ with respect to the hyperbolic metric, i.e., $-4 $. On
the other hand, since $ \tilde{f}^1 $ is distance preserving, it
takes geodesics in $ \Delta $ to geodesics in $ \mathbf{B}^n $ and
$ \tilde{f}^1 ( \Delta) $ is a totally geodesic submanifold of $
\mathbf{B}^n $. Hence, the sectional curvature of $ \tilde{f}^1(
\Delta ) $ at $ \tilde{f}^1(p), p \in \Delta $ with respect to the
induced metric is equal to the sectional curvature in the $
F^K_{\mathbf{B}^n} $-metric. This can be realized only by
holomorphic sections in the ball -- indeed, since $
F^K_{\mathbf{B}^n }$ has constant holomorphic sectional curvature
$ -4 $, at any point the sectional curvature of a $ 2$-plane $P$
spanned by an orthonormal pair of tangent vectors $ X, Y$ is $ -(
1 + 3 \langle X, J_0 Y \rangle ) $. In particular, a
two-dimensional subspace $Q$ of the tangent space at the point $
\tilde{f}^1(p) $ is in $ S_0 $ if and only if the sectional
curvature of $ Q $ is $ -4 $. This shows that complex lines are
taken to complex lines by $ d \tilde{f}^1 $.

\medskip

\noindent Consequently,  $ d \tilde{f}^1 \circ J = \pm J_0 \circ d
\tilde{f}^1 $ on any $ P \in S $. Now, using the fact that $ S$ is
connected as a subset of the Grassmann manifold of $2$-planes in $
T _p(\Delta) $ we can conclude that  $ d \tilde{f}^1 \circ J = J_0
\circ d \tilde{f}^1 $ on every $ P \in S $ or  $ d \tilde{f}^1
\circ J = - J_0 \circ d \tilde{f}^1 $ on every $ P \in S $, i.e.,
$ d \tilde{f}^1 \circ J = \pm J_0 \circ d \tilde{f}^1 $ on $ T_p (
\Delta) $. From the connectedness of $ \Delta $, it follows that $
d \tilde{f}^1 \circ J = \pm J_0 \circ d \tilde{f}^1 $ on $ T
\Delta $. This completes Step II.

\medskip

\noindent Further, recall that $f \in C^1 $ by assumption and
consequently the mapping $ (a_1, \ldots, a_{n-1}) \rightarrow
\tilde{f}^1_{a_1 \ldots a_{n-1}} $ is also $ C^1 $ -- this is the
only point in the proof that uses the $C^1$-smoothness of $f$.
Now, from the connectedness of $ \Delta^{n-1} $, we see that
either
\[
d \tilde{f}^1_{a_1 \ldots a_{n-1}} \circ J = J_0 \circ d
\tilde{f}^1_{a_1 \ldots a_{n-1}}
\]
or
\[
d \tilde{f}^1_{a_1 \ldots a_{n-1}} \circ J = - J_0 \circ d
\tilde{f}^1_{a_1 \ldots a_{n-1}}
\]
for every $ ( a_1, \ldots, a_{n-1} ) \in \Delta^{n-1} $. In other
words, either $ \tilde{f}^1_{a_1 \ldots a_{n-1}} $ is holomorphic
for every choice of $ ( a_1, \ldots, a_{n-1} ) \in \Delta^{n-1} $
or anti-holomorphic for every $ ( a_1, \ldots, a_{n-1} ) \in
\Delta^{n-1} $. Replacing $ \tilde{f}^1_{a_1 \ldots a_{n-1}} $ by
it's complex conjugate, if necessary, we may assume that $
\tilde{f}^1_{a_1 \ldots a_{n-1}} $ is holomorphic for every $ (
a_1, \ldots, a_{n-1} ) \in \Delta^{n-1} $.

\medskip

\noindent For each $ j=2, \ldots, n $, consider the composition $
\tilde{f}^j_{a_1 \ldots a_{n-1}} = f \circ \phi^j_{a_1 \ldots
a_{n-1}} $ where
\[
\phi^j_{a_1 \ldots a_{n-1}} (z) = ( a_1, \ldots, a_{j-1}, z, a_j,
\ldots, a_{n-1} )
\]
for $ z \in \Delta $ and $ ( a_1, \ldots, a_{n-1} ) \in
\Delta^{n-1} $. Now, an argument similar to the one employed in
Step I and Step II shows that $ \tilde{f}^j_{a_1 \ldots a_{n-1}} $
are holomorphic for every $ ( a_1, \ldots, a_{n-1} ) \in
\Delta^{n-1} $ and for all $ 2 \leq j \leq n $. Said differently,
this is just the assertion that $f$ is holomorphic in each
variable separately. Applying Hartog's theorem on separate
analyticity to the mapping $f$, we conclude that $f$ is
holomorphic on $ \Delta^n $. This violates the fact that there
cannot be a biholomorphism from $ \Delta^n $ onto $ \mathbf{B}^n
$. This contradiction proves the theorem for the Kobayashi metric.
Since the Kobayashi and the Carath\'{e}odory metrics coincide on $
\Delta^n $ and $ \mathbf{B}^n $, we obtain that there is no $
C^1$-Carath\'{e}odory isometry between $ \Delta^n $ and $
\mathbf{B}^n $ for any $ n > 1 $. \qed

\medskip

\noindent \textit{Proof of Theorem \ref{0.2}:} To prove this,
suppose that for some $ 2 \leq m \leq n $, there exists a $C^1
$-Kobayashi isometry $f : \mathbf{B}^{n_1} \times \mathbf{B}^{n_2}
\times \ldots \times \mathbf{B}^{n_m} \rightarrow \mathbf{B}^n $.
Now, let $ L_{1,j} $ denote the $j^{{th}} $-coordinate axis in $
\mathbf{C}^{n_1} $ and let $ \phi_{1,j} $ be a holomorphic
parametrization of the intersection of the complex line $ L_{1,j}
$ with $ \mathbf{B}^{n_1} $ and it can be checked that it is an
isometric immersion from $ \Delta $ into $ \mathbf{B}^{n_1} $,
i.e., for every $ 1 \leq j \leq n_1 $,
\begin{equation}\label{D0}
d_{\mathbf{B}^{n_1} } \big( \phi_{1,j} (p), \phi_{1,j}(q) \big) =
d_{\Delta} (p,q)
\end{equation}
for all $p, q \in \Delta $. Next, for each fixed tuple $ \alpha =
( \alpha_2, \ldots, \alpha_m) \in \mathbf{B}^{n_2} \times \ldots
\times \mathbf{B}^{n_m} $ and $ 1 \leq j \leq n_1 $, define
\begin{eqnarray*}
\psi_{1,j}^{\alpha}(z) = \big( \phi_{1,j}(z), \alpha_2, \ldots,
\alpha_m \big)
\end{eqnarray*}
for $ z $ in $ \Delta $ where $ \phi_{1,j} $ is as described
above. Then
\[
\psi_{1,j}^{\alpha} : \Delta \rightarrow \mathbf{B}^{n_1} \times
\mathbf{B}^{n_2} \times \ldots \times \mathbf{B}^{n_m}
\]
is an extremal disc for the Kobayashi distance. Indeed, for all
$p, q$ in $ \Delta $,
\[
d_{\mathbf{B}^{n_1} \times \mathbf{B}^{n_2} \times \ldots \times
\mathbf{B}^{n_m}} \big( \psi_{1,j}^{\alpha}(p),
\psi_{1,j}^{\alpha}(q) \big) =
\]
\[
d_{\mathbf{B}^{n_1} \times \mathbf{B}^{n_2} \times \ldots \times
\mathbf{B}^{n_m}} \big( \phi_{1,j}(p), \alpha_2, \ldots, \alpha_m
), \phi_{1,j}(q), \alpha_2, \ldots, \alpha_m ) \big).
\]
Now, we turn to the explicit formulae for the Kobayashi distance
on product of balls -- so that the right hand side above equals
\[
\max  \Big \{ d_{\mathbf{B}^{n_1}} \big( \phi_{1,j} (p),
\phi_{1,j} (q)\big) , d_{\mathbf{B}^{n_2}}( \alpha_2, \alpha_2),
\ldots, d_{\mathbf{B}^{n_m}} (\alpha_m, \alpha_m) ) \Big \}
\]
which in turn equals
\[
d_{\mathbf{B}^{n_1}} \big( \phi_{1,j} (p), \phi_{1,j} (q)\big).
\]
Finally, from (\ref{D0}) we see that
\[
d_{\mathbf{B}^{n_1} \times \mathbf{B}^{n_2} \times \ldots \times
\mathbf{B}^{n_m}} \big( \psi_{1,j}^{\alpha} (p),
\psi_{1,j}^{\alpha}(q) \big) = d_{\Delta} (p,q).
\]

\medskip

\noindent Once we know that $ \psi_{1,j}^{\alpha} $ are complex
geodesics, consider the composition
\[
{f}_{1,j}^{\alpha} = f \circ \psi_{1,j}^{\alpha} : \Delta
\rightarrow \mathbf{B}^{n}
\]
for each fixed $ \alpha = ( \alpha_2, \ldots, \alpha_m) \in
\mathbf{B}^{n_2} \times \ldots \times \mathbf{B}^{n_m} $ and $ 1
\leq j \leq n_1 $ which is the restriction of $f$ to the line in $
\mathbf{B}^{n_1} \times \mathbf{B}^{n_2} \times \ldots \times
\mathbf{B}^{n_m} $ parameterized by $ \psi_{1,j}^{\alpha} $.
Evidently, $ {f}_{1,j}^{\alpha} $ preserves the Kobayashi
distance, i.e.,
\[
d_{\mathbf{B}^n} \big({f}_{1,j}^{\alpha}(p), {f}_{1,j}^{\alpha}(q)
\big) = d_{\Delta} (p,q)
\]
for all $ p, q $ in $ \Delta $.

\medskip

\noindent Using the arguments similar to those used in the proof
of Step I of Theorem \ref{0.1}, one can show that $
{f}_{1,j}^{\alpha} $ is $ C^{\infty}$-smooth. This requires the
fact that $ F^K_{\Delta} $ and $ F^K_{\mathbf{B}^n} $ are both
Riemannian (i.e., both are quadratic forms associated to
Riemannian metrics) and that $ {f}_{1,j}^{\alpha} $ is distance
preserving. The goal now is to show that $ {f}_{1,j}^{\alpha} $ is
holomorphic or anti-holomorphic for each fixed $j$ and $ \alpha $.
This follows by repeating the reasoning in Step II of Theorem
\ref{0.1}.

\medskip

\noindent Moreover, the connectedness of $ \mathbf{B}^{n_2} \times
\ldots \times \mathbf{B}^{n_m} $ and the set of all complex lines
together with $ C^1$-smoothness of $f$ (used in exactly the same
way as in Theorem \ref{0.1} and only here) forces that either $
{f}_{1,j}^{\alpha} $ is holomorphic for every choice $ \alpha \in
\mathbf{B}^{n_2} \times \ldots \times \mathbf{B}^{n_m} $ or
anti-holomorphic for every $ \alpha = ( \alpha_2, \ldots,
\alpha_m) \in \mathbf{B}^{n_2} \times \ldots \times
\mathbf{B}^{n_m} $ for each fixed $j $. As before, replacing $
{f}_{1,j}^{\alpha} $ by it's complex conjugate, if necessary, we
may assume that $ {f}_{1,j}^{\alpha} $ is holomorphic for each
fixed $ 1 \leq j \leq n_1 $ and for all $ \alpha $. Repeating this
procedure shows that $f$ restricted to all complex lines in $
\mathbf{B}^{n_1} \times \mathbf{B}^{n_2} \times \ldots \times
\mathbf{B}^{n_m} $ is holomorphic. This allows us to conclude that
$f$ is a biholomorphism from $ \mathbf{B}^{n_1} \times
\mathbf{B}^{n_2} \times \ldots \times \mathbf{B}^{n_m} $ onto $
\mathbf{B}^n $. This contradicts the fact that $ \mathbf{B}^n $
cannot be mapped biholomorphically onto any product domain thereby
finishing the proof for the Kobayashi metric. Furthermore, since
the Kobayashi and the Carath\'{e}odory metrics are equal on $
\mathbf{B}^n $ and $ \mathbf{B}^{n_1} \times \mathbf{B}^{n_2}
\times \ldots \times \mathbf{B}^{n_m} $, the result follows. \qed

\medskip

\noindent \textit{Proof of Theorem \ref{0.3}:} Suppose for some $
2 \leq m \leq n $, there exists a $C^1 $-Kobayashi isometry $f :
D_1 \times D_2 \times \ldots \times D_m \rightarrow \mathbf{B}^n
$. Now, fix $ a = (a_2, \ldots, a_{m}) \in D_2 \times D_2 \times
\ldots \times D_m $ and consider $ f_a : D_1 \rightarrow
\mathbf{B}^n $ defined by $ f_a(z) = f( z, a_2, \ldots, a_m) $ for
$ z $ in $ D_1 $. Note that for any $ z, w \in D_1 $,
\[
d_{\mathbf{B}^n}  \big( f_a(z), f_a(w) \big) = d_{\mathbf{B}^n}
\big( f( z, a_2, \ldots, a_m), f( w, a_2, \ldots, a_m)  \big)
\]
Since $f$ is a Kobayashi isometry, the right hand side above
equals
\[
d_{D_1 \times D_2 \times \ldots \times D_m } \big( ( z, a_2,
\ldots, a_m), ( w, a_2, \ldots, a_m)  \big)
\]
which in turn by the product formula of the Kobayashi metric is
given by
\[
\max \big \{ d_{D_1}(z,w), d_{D_2} (a_2, a_2), \ldots, d_{D_m}
(a_m, a_m) \big \}
\]
which equals
\[
d_{D_1}(z, w).
\]
The above calculation shows that $ d_{\mathbf{B}^n}  \big( f_a(z),
f_a(w) \big) = d_{D_1}(z, w) $ for all $ z, w $ in $D_1 $. Said
differently, the mapping $ f_a : D_1 \rightarrow \mathbf{B}^n $ is
distance preserving.

\medskip

\noindent \textbf{Step I:} By Lemma 3.3 of
\cite{Seshadri&Verma-2009}, it is known that $ d_{D_1} $ is
Lipschitz equivalent to the Euclidean distance on compact convex
subdomains of $ D_1 $. To verify this, observe that $ F^K_{D_1} $
is jointly continuous by virtue of the tautness of the domain $
D_1 $. Hence, $ F^K_{D_1} ( \cdot, v) \approx |v| $ on any compact
subset of $ D_1 $. Integrating the above estimate along straight
line segments and complex geodesics joining any two points $p,q
\in D_1 $, we get the required result. Note that convexity of $
D_1 $ guarantees the existence of geodesics between any two points
in $ D_1 $ and that the line segment joining these two points is
contained in $ D_1 $. Therefore, from the classical theorem of
Rademacher and Stepanov, we see that $ f_a $ is differentiable
almost everywhere.

\medskip

\noindent \textbf{Step II:} Firstly, it follows from
\cite{Lempert-1981} that $ F^K_{D_1} $ is $ C^1$-smooth on $ D_1
\times \mathbf{C}^{n_1} \setminus \{0\} $. Secondly, an argument
similar to that used in \cite{Seshadri&Verma-2009} yields that the
infinitesimal metric $ F^K_{D_1} $ is Riemannian. $ F^K_{D_1} $
being Riemannian at $ p \in D_1 $ is equivalent to the
`parallelogram law' being satisfied on $ T_p D_1 $, i.e.,
\begin{equation} \label{II}
\big( F^K_{D_1} (p, v + w) \big)^2 + \big( F^K_{D_1} (p, v -
w)\big)^2 = 2 \left( \big( F^K_{D_1} (p,v)\big)^2 +  \big(
F^K(p,w) \big)^2 \right)
\end{equation}
for all $ v, w \in  T_p D_1 $. This is verified by first showing
that $ F^K_{D_1} = f_a^*( F^K_{\mathbf{B}^n}) $ at every point of
differentiability of $f_a $ which in turn relies on
\cite{Myers&Steerod-1939} and existence of smooth geodesics in $
D_1$. Once we know that $ F^K_{D_1} $ is Riemannian at every point
of differentiability of $f_a $ which is a dense subset of $D_1$,
fix $v, w $ in (\ref{II}) and use the continuity of $ F^K_{D_1} $
in the domain variable to conclude.

\medskip

\noindent \textbf{Step III:} Since $ f_a $ is a continuous
distance preserving mapping between two $ C^1$ Riemannian
manifolds $ (D_1, F^K_{D_1} ) $ and $ (\mathbf{B}^n,
F^K_{\mathbf{B}^n }) $, applying the theorem of Myers-Steenrod
(\cite{Myers&Steerod-1939}) gives us that $ f_a $ is $ C^1 $.

\medskip

\noindent \textbf{Step IV:} $ f_a $ is
holomorphic/anti-holomorphic. This follows exactly as in Step II
of Theorem \ref{0.1}.

\medskip

\noindent By the connectedness of $ D_1$ and $ C^1$-smoothness of
the isometry $f$, an argument similar to the one used in Theorem
\ref{0.1} immediately shows that either $ f_a $ is holomorphic for
every $ a \in D_1 $ or conjugate holomorphic for every $ a $ in $
D_1 $. Applying complex conjugation, if necessary, we may assume
that $ f_a $ is holomorphic for every choice of $ a \in D_1 $.
Likewise, one can show that the mappings $ f_b $ given by $ f_b(z)
= f( b_1, z, b_3, \ldots, b_m), z \in D_2 $ are all holomorphic
for every parameter $ b= (b_1, b_3, \ldots, b_m ) \in D_1 \times
D_3 \times \ldots \times D_m $. Repeating this argument, we see
that $f$ is separately holomorphic with respect to a group of
variables for any fixed value of the other ones. In this setting,
a generalisation of the classical Hartog's theorem due to
Herv\'{e} (see Theorem 2 in section II.2.1 of \cite{Herve}) shows
that $f $ is holomorphic on $ D_1 \times D_2 \times \ldots \times
D_m $ and consequently $ D_1 \times D_2 \times \ldots \times D_m $
is biholomorphic to $ \mathbf{B}^n $. This contradiction finishes
the proof for the Kobayashi metric. Since the Kobayashi and the
Carath\'{e}odory metric are equal on bounded convex domains (cf.
\cite{Lempert-1982}), the theorem is completely proven. \qed

\section{Continuous extendability up to the
boundary of isometries of the Kobayashi metric}

\noindent In order to be able to prove Theorem \ref{1}, we need to
introduce the following special coordinates constructed for weakly
pseudoconvex finite type domains in \cite{Catlin-1989}:

\medskip

\noindent Let $ D \subset \mathbf{C}^2 $ be a domain whose
boundary is smooth pseudoconvex and of finite type $ 2m, m \in
\mathbf{N} $ near the origin. Let $ U$ be a tiny neighbourhood of
the origin and $ \rho $ a smooth defining function on $ U $ such
that $ U \cap \partial D = \{ \rho = 0 \} $ and $ \frac{ \partial
\rho}{\partial z_2} (0,0) \neq 0 $. Then for each $ \zeta \in U
\cap D $, there exists a unique automorphism $ \phi^{\zeta} $ of $
\mathbf{C}^2 $ defined by
\[
\phi^{\zeta}(z_1,z_2) = \Bigg( z_1 - \zeta_1, \Big (z_2 - \zeta_2
- \displaystyle\sum_{l=1}^{2m} d^l( \zeta) ( z_1 - \zeta_1 )^l
\Big) \left (d^0(\zeta)\right) ^{-1} \Bigg)
\]
where $ d^l(\zeta) $ are non-zero functions depending smoothly on
$ \zeta $ with the property that the function $ \rho \circ
(\phi^{\zeta})^{-1} $ satisfies
\[
\rho \circ (\phi^{\zeta})^{-1} (w_1,w_2) = 2 \Re w_2 +
\displaystyle\sum _{l=2} ^{2m} P_{l,\zeta} (w_1,\bar{w}_1) + o
\big( |w_1|^{2m} + \Im w_2 \big)
\]
where $ P_{l,\zeta} (w_1,\bar{w}_1)$ are real-valued homogeneous
polynomials of degree $l$ without any harmonic terms.

\medskip

\noindent Let $ \| \cdot \| $ be a fixed norm on the finite
dimensional space of all real-valued polynomials on the complex
plane with degree at most $2m$ that do not contain any harmonic
terms. Define for some small $ \delta > 0 $
\[
\tau ( \zeta, \delta ) = \min_{ 2 \leq l \leq 2m } \left (
\frac{\delta}{ \| P_{l,\zeta} (w_1,\bar{w}_1) \| } \right) ^{1/l}.
\]
Let $ \Delta_{\zeta}^{\delta}: \mathbf{C}^2 \rightarrow
\mathbf{C}^2 $ be anisotropic dilations defined by
\begin{equation*}
\Delta_{ \zeta}^{\delta}(z_1,z_2)  = \left( \frac{z_1} {\tau (
\zeta, \delta)}, \frac{z_2}{ \delta } \right).
\end{equation*}
A useful set for approximating the geometry of $D$ near the origin
is the Catlin's bidisc $Q (\zeta, \delta )$ determined by the
quantities $ \tau( \zeta, \delta) $ where
\begin{equation*}
Q (\zeta, \delta ) = \Big(\Delta_{ \zeta}^{\delta} \circ \phi
^{\zeta} \Big)^{-1}( \Delta \times \Delta ).
\end{equation*}

\medskip

\noindent The proof of Theorem \ref{1} also requires the following
estimates on the Kobayashi metric near a weakly pseudoconvex
boundary point of finite type.

\begin{prop} \label{S}
Let $D$ be a bounded domain in $ \mathbf{C}^2 $. Assume that $
\partial D $ is $ C^{\infty} $-smooth weakly pseudoconvex of
finite type near a point $ p^0 \in \partial D $. Given $ \epsilon
> 0 $, there exist positive numbers $ r_2 < r_1 < \epsilon $ and $
C$ such that the following inequality is true:
\[
d_D (a, b) \geq -(1/2) \log d( b, \partial D) - C, \ a \in D
\setminus B(p^0, r_1), \ b \in B(p^0, r_2) \cap D.
\]

\end{prop}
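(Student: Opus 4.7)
The plan is to integrate a pointwise lower bound on the infinitesimal Kobayashi metric $F_D^K$ in the (real) normal direction to $\partial D$ along an arbitrary piecewise $C^1$ path joining $a$ and $b$. The logarithmic factor $-\tfrac{1}{2}\log d(b,\partial D)$ will emerge because the normal-direction integrand is essentially $\tfrac{1}{2}\,d\log\delta$, with $\delta(z)=d(z,\partial D)$, and the total variation of $\log\delta\circ\gamma$ always dominates the net change between the endpoints.

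The key infinitesimal ingredient, which I would extract from the estimates in \cite{Herbort-2005} applied to the normal direction (the case needed here is the classical Hopf-type bound via a local holomorphic peak function, which already follows from smooth pseudoconvexity near $p^0$), is the following: there exist $r_1 \in (0,\epsilon)$ and a constant $c_0>0$, which can be sharpened to $c_0=1/2$ by a scaling of the peak-function construction mirroring the classical disc computation, such that
\[
F_D^K(z,v) \;\geq\; c_0\,\frac{|\langle \nabla\delta(z),v\rangle_{\mathbf{R}}|}{\delta(z)}
\]
for every $z \in B(p^0,r_1)\cap D$ and $v\in\mathbf{C}^2$. Choose $r_2 \in (0,r_1/4)$ so that $\delta(b)\le|b-p^0|<r_2$ for every $b \in B(p^0,r_2)\cap D$ and so that Catlin's special coordinates introduced above are valid on $B(p^0,2r_2)\cap\overline{D}$. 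Given any piecewise $C^1$ curve $\gamma:[0,1]\to D$ with $\gamma(0)=a$ and $\gamma(1)=b$, set $t^* := \sup\{t : \gamma(t) \notin B(p^0,r_1)\}$, which is well defined since $a \notin B(p^0,r_1)$ while $b$ lies there. Continuity forces $\gamma(t^*) \in \partial B(p^0,r_1)\cap\overline{D}$ and $\gamma((t^*,1]) \subset B(p^0,r_1)\cap D$, so the infinitesimal bound applies on $(t^*,1]$ and gives
\[
L^K_D(\gamma) \;\geq\; \int_{t^*}^1 F_D^K(\gamma(t),\gamma'(t))\,dt \;\geq\; c_0 \int_{t^*}^1 \Bigl|\tfrac{d}{dt}\log\delta(\gamma(t))\Bigr|\,dt \;\geq\; c_0\,\bigl|\log\delta(\gamma(t^*)) - \log\delta(b)\bigr|,
\]
using $|\langle\nabla\delta,\gamma'\rangle|\geq|(\delta\circ\gamma)'|$ and the total-variation inequality.

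The main obstacle is that $\gamma(t^*)\in\partial B(p^0,r_1)\cap\overline{D}$ can in principle lie very close to $\partial D$, so that $\log\delta(\gamma(t^*))$ is potentially very negative and the bound just derived loses its force. I would resolve this by a two-scale argument: fix $r_1<r_1^+<\epsilon$ on both of which the infinitesimal estimate remains valid, apply the total-variation argument also to the sub-interval of $\gamma$ in $B(p^0,r_1^+)\cap D$, and use that the portion of $\gamma$ in the annular shell $B(p^0,r_1^+)\setminus B(p^0,r_1)$ contributes an additional positive, uniformly bounded amount to $L^K_D(\gamma)$; this absorbs $\log\delta(\gamma(t^*))$ into a fixed additive constant independent of $\gamma$, $a$, and $b$. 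An equivalent route would be to invoke Herbort's two-point lower estimate for points near $p^0$, together with a Fornaess--Sibony localization of the Kobayashi metric using a local plurisubharmonic peak function at $p^0$. Taking the infimum over all curves $\gamma$ and combining all geometric constants into a single $C=C(r_1,r_1^+,r_2,\partial D)$ yields $d_D(a,b) \geq -\tfrac{1}{2}\log\delta(b) - C$ as required.
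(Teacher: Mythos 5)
Your overall strategy is the same as the paper's: obtain a pointwise lower bound on $F^K_D$ in the normal direction near $p^0$ (the paper uses Theorem 1.1 of \cite{Berteloot-2003}, which gives $F^K_D(z,v)\gtrsim |v_T|/\tau\bigl(z,d(z,\partial D)\bigr)+|v_N|/d(z,\partial D)$), integrate it along the final sub-arc of an arbitrary piecewise $C^1$ curve after its last exit from a fixed neighbourhood of $p^0$, recognize the integrand as at least $\bigl|\tfrac{d}{dt}\log\bigl(d(\gamma(t),\partial D)\bigr)^{1/2}\bigr|$, and take the infimum over curves. You also correctly isolate the one delicate point: the exit point $\gamma(t^*)\in\partial B(p^0,r_1)\cap D$ may itself lie arbitrarily close to $\partial D$, so the total-variation bound only yields $\tfrac12\log d(\gamma(t^*),\partial D)-\tfrac12\log d(b,\partial D)$ with an a priori uncontrolled first term. (The paper's own write-up absorbs this term into the constant $C$ without comment.)

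However, your proposed repair of that point does not work as stated. You claim the portion of $\gamma$ in the shell $B(p^0,r_1^+)\setminus B(p^0,r_1)$ contributes ``an additional positive, uniformly bounded amount'' which ``absorbs $\log\delta(\gamma(t^*))$ into a fixed additive constant.'' A uniformly bounded quantity cannot absorb $\tfrac12\log\delta(\gamma(t^*))$, which is unbounded below as the curve is allowed to hug $\partial D$ while crossing the shell; applying the same total-variation argument on the larger ball only pushes the identical problem out to $\partial B(p^0,r_1^+)$. What is actually needed --- and what your argument never invokes --- is the tangential part of the finite-type estimate: crossing the shell (Euclidean length at least $r_1^+-r_1$) while remaining at height $\delta\le h$ costs at least $c\,h^{-1/2m}$, since $F^K_D(z,v)\gtrsim |v|/\tau\bigl(z,\delta(z)\bigr)\gtrsim |v|\,\delta(z)^{-1/2m}$ there, and $h^{-1/2m}$ dominates $-\log h$ as $h\to 0$. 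Even then the bookkeeping is not routine, because the normal and tangential lower bounds on the shell apply to the same sub-arc and cannot simply be added with their optimal constants. Alternatively one can avoid curve-splitting entirely by building a global negative plurisubharmonic function from a local peak function at $p^0$ and applying the Hopf--Harnack inequality for negative subharmonic functions to chains of analytic discs --- essentially the Forstneric--Rosay mechanism you allude to in your final sentence --- but that route is also only named, not carried out. As written, the proof is incomplete precisely at the point you yourself identify as the main obstacle.
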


\begin{proof} By Theorem 1.1 of \cite{Berteloot-2003} there exists
a neighbourhood $U$ of $p^0$ in $ \mathbf{C}^2$ such that
\begin{equation} \label{S1}
F^K_{D}(z, v) \approx  \frac{|v_T|} { \tau \big( z, d(z, \partial
D) \big) } + \frac{ |v_N|} {d(z, \partial D)}
\end{equation}
for all $z \in U \cap D $ and $v$ a tangent vector at $z$. As
usual the decomposition $ v = v_T + v_N $ into the tangential and
normal components is taken at $ \pi(z) \in
\partial D $ which is the closest point on $ \partial D $ to $z$ and
$ \tau \big( z, d(z, \partial D) \big) $ is as described above.
Let $ \gamma $ be an arbitrary piecewise $ C^1$-smooth curve in $
D$ joining $a$ and $ b$, i.e., $ \gamma(0) = a, \gamma(1) = b $.
As we travel along $ \gamma$ starting from $ a$, there is a last
point $ \alpha $ on the curve with $ \alpha \in
\partial U \cap D$. Let $ \gamma(t) = \alpha$ and call $
\sigma $ the subcurve of $ \gamma $ with end-points $ b$ and $
\alpha $. Then $ \sigma $ is contained in a $ \delta
$-neighbourhood of $ \partial D$ for some fixed uniform $ \delta
> 0 $. Using (\ref{S1}) we get:
\begin{eqnarray*}
\int_0^1 F^K_{D} \big( \gamma(t), \dot{\gamma}(t) \big) dt & \geq
& \int_{t}^1 F^K_{D} \big( \sigma(t), \dot{\sigma}(t)
\big) dt \\
& \gtrsim &  \int_{t}^1 \frac{ | \dot{\sigma}_T(t) |} { \tau \big(
\sigma (t), d(\sigma (t),\partial D) \big) } dt +
\int_{t}^1 \frac{ | \dot{\sigma}_N(t) |} { d(\sigma(t),\partial D)  } dt \\
& \geq & \int_{t}^1 \frac{ | \dot{\sigma}_N(t) |} {
d(\sigma(t),\partial D)  } dt.
\end{eqnarray*}
The last integrand is seen to be at least
\[
\frac{d}{dt} \log \big( d( \sigma(t), \partial D) \big)^{1/2}
\]
(see for example Lemma 4.1 of \cite{Balogh&Bonk-2000}) and hence
\[
\int_0^1 F^K_{D} \big( \gamma(t), \dot{\gamma}(t) \big) dt \gtrsim
-(1/2) \log d(b, \partial D) - C
\]
for some uniform $ C> 0 $. Taking the infimum over all such $
\gamma$ it follows that
\[
d_{D} (a, b) \gtrsim -(1/2) \log d(b, \partial D) - C.
\]
\end{proof}

\begin{prop} \label{Q}
Let $D$ be a bounded domain in $ \mathbf{C}^2 $.
Assume that $ \partial D $ is $ {C}^{\infty}$-smooth weakly
pseudoconvex of finite type near two distinct boundary points $
a^0$ and $ b^0 $. Then for a suitable constant $ C$,
\[
d_D(a,b) \geq -(1/2) \log d(a, \partial D) -(1/2) \log d(b,
\partial D) - C
\]
whenever $ a, b \in D$, $ a $ is near $a^0 $ and $ b $ is near
$b^0 $.
\end{prop}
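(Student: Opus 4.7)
The plan is to bootstrap the one-point estimate of Proposition \ref{S} to a two-point estimate by cutting any competing curve into three arcs --- one near $a^0$, one bounded away from both boundary points, and one near $b^0$ --- and applying Proposition \ref{S} to each of the first and third arcs. The middle arc contributes nonnegatively and is simply discarded. This is the standard Forstneri\v{c}--Rosay style combination, and since Proposition \ref{S} has already done the analytic work via the Berteloot estimate on $F^K_D$, the argument is essentially geometric.

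First I would invoke Proposition \ref{S} once at $p^0=a^0$ to produce radii $r_2<r_1<\varepsilon$ and a constant $C_1$, and once at $p^0=b^0$ to produce radii $r_2'<r_1'<\varepsilon$ and a constant $C_2$. By choosing $\varepsilon<|a^0-b^0|/4$ from the outset, the balls $B(a^0,r_1)$ and $B(b^0,r_1')$ are disjoint. Now fix $a\in B(a^0,r_2)\cap D$ and $b\in B(b^0,r_2')\cap D$, and let $\gamma:[0,1]\to D$ be an arbitrary piecewise $C^1$ curve with $\gamma(0)=a$, $\gamma(1)=b$. Because $\gamma$ begins inside $B(a^0,r_1)$ and ends inside $B(b^0,r_1')$, there is a first time $s_1\in(0,1)$ at which $\gamma$ meets $\partial B(a^0,r_1)$, and a last time $s_2\in(s_1,1)$ at which $\gamma$ meets $\partial B(b^0,r_1')$. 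Set $a':=\gamma(s_1)$ and $b':=\gamma(s_2)$, which lie in $D\setminus B(a^0,r_1)$ and $D\setminus B(b^0,r_1')$ respectively.

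Since the Kobayashi length is additive on subarcs and $F^K_D\ge 0$, I can write
\[
L^K_D(\gamma)\;\ge\; L^K_D\bigl(\gamma|_{[0,s_1]}\bigr)\;+\;L^K_D\bigl(\gamma|_{[s_2,1]}\bigr)\;\ge\; d_D(a,a')\;+\;d_D(b',b).
\]
Proposition \ref{S} applied at $a^0$ with the point pair $(a',a)$ (which fits its hypotheses) yields $d_D(a',a)\ge -(1/2)\log d(a,\partial D)-C_1$, and the same proposition applied at $b^0$ with the pair $(b',b)$ yields $d_D(b',b)\ge -(1/2)\log d(b,\partial D)-C_2$. Summing these and taking the infimum of $L^K_D(\gamma)$ over all admissible $\gamma$ gives the asserted inequality with $C:=C_1+C_2$.

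The only point that needs care is ensuring that the splitting times $s_1<s_2$ genuinely exist and that the intermediate endpoints $a',b'$ land in the correct set for Proposition \ref{S} to apply; both are handled by the disjointness $B(a^0,r_1)\cap B(b^0,r_1')=\emptyset$ arranged in the first step. There is no analytic obstacle beyond that already overcome in Proposition \ref{S}: the Berteloot estimate is the essential input, and the present statement is a purely metric-geometric consequence.
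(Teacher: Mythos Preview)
Your proof is correct and is precisely the argument the paper has in mind: the paper's own proof is the single sentence ``Each path in $D$ joining $a$ and $b$ must exit from neighbourhoods of $a^0$ and $b^0$. Hence the result follows from Proposition \ref{S},'' and your write-up is a careful unpacking of exactly that idea. The disjointness of the two balls (which you arrange via $\varepsilon<|a^0-b^0|/4$) is the only point needing attention, and you handle it correctly.
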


\begin{proof} Each path in $ D $ joining $a $ and $b$
must exit from neighbourhoods of $ a^0 $ and $ b^0 $. Hence the
result follows from Proposition \ref{S}.
\end{proof}

\medskip

\noindent \textit{Proof of Theorem \ref{1}:} Suppose that $f$ does
not extend continuously to any neighbourhood $ U_1 $ of $ p^0 $ in
$ \overline{D}_1 $. Then there exists a sequence of points $ \{
s^j \} \subset D_1 $ converging to $ p^0 \in \partial D_1 $ such
that the corresponding image sequence $ \{ f(s^j ) \} $ does not
converge to the point $ q^0 \in \partial D_2 $. Note that by
hypothesis, there exists a sequence $ \{ p^j \} \subset D_1 $
converging to $ p^0 \in \partial D_1 $ such that the limit $
\displaystyle \lim_{j \rightarrow \infty} f(p^j) = q^0 \in
\partial D_2 $ exists.

\medskip

\noindent Consider polygonal paths $ \gamma ^j $ in $ D_1 $
joining $ p^j $ and $ s^j $ defined as follows -- for each $j$,
choose $ p^{j0}, s^{j0} \in \partial D_1 $ closest to $ p^j $ and
$ s^j $ respectively. Set $ p^{j'} = p^j - |p^{j}- s^{j} |
n(p^{j0})$ and $ s^{j'}= s^j - |p^j - s^j| n(s^{j0})$ where $ n(z)
$ denotes the outward unit normal to $ \partial D_1 $ at $ z \in
\partial D_1 $. Let $ \gamma^j $ be the union of three segments:
the first one being the straight line path joining $p^{j}$ and
$p^{j'}$ along the inward normal to $ \partial D_1 $ at the point
$p^{j0}$, the second one being a straight line path joining
$p^{j'}$ and $ s^{j'}$ and finally the third path is taken to be
the straight line path joining $ s^{j'} $ and $ s^{j} $ along the
inward normal to the point $ s^{j0} $. Then $ f \circ \gamma ^j $
is a continuous path in $ D_2 $ joining $ f(p^j) $ and $ f(s^j) $.
Now, for each $ j $, pick $ u^j \in {B}( q^0, \epsilon) \cap U_2 $
on trace$(f \circ \gamma ^j)$ for some $ \epsilon > 0 $
sufficiently small. Let $ \{ t^j \} \subset D_1 $ be such that $
f(t^j) = u^j $. Then $ t^j \in \mbox{trace}(\gamma^j) $ and hence
$ t^j \rightarrow p^0 $ as $ j \rightarrow \infty $ by
construction. Moreover, the sequence $ f(t^j) = u^j \rightarrow
u^0 \in U_2 \cap \partial D_2, (u^0 \neq q^0) $. It follows from
\cite{Forstneric&Rosay-1987} that
\begin{multline} \label{Y1}
d_{D_1}(p^j,t^j)  \leq  - (1/2) \log {d(p^j,\partial D_1)} +
 (1/2) \log \big( d(p^j,\partial D_1) + |p^j - t^j| \big) \\
 + (1/2) \log \big( d(t^j,\partial D_1) + |p^j - t^j| \big) - (1/2) \log {d(t^j,\partial D_1)} +
 C_1
\end{multline}
and
\begin{equation} \label{Y2}
d_{D_2} \big( f(p^j), f(t^j) \big) \geq - (1/2) \log d \big(
f(p^j), \partial D_2 \big) - (1/2) \log d \big( f(t^j), \partial
D_2 ) - C_2
\end{equation}
for all $j$ large and uniform positive constants $ C_1 $ and $ C_2
$.

\medskip

\noindent \textbf{Assertion:} $ d \big( f(p^j), \partial D_2 )
\leq C_3 d( p^j, \partial D_2 ) $ and $ d \big( f(t^j), \partial
D_2 ) \leq C_3 d( t^j, \partial D_2 ) $ for some uniform positive
constant $ C_3 $.

\medskip

\noindent Grant this for now. Now, using the fact $ d_{D_1} ( p^j,
t^j) = d_{D_2} \big( f(p^j), f(t^j) \big) $ and comparing the
inequalities (\ref{Y1}) and (\ref{Y2}), it follows from the
assertion that for all $j$ large
\[
- ( C_1 + C_2 + \log C_3 ) \leq (1/2) \log \big( d(p^j,\partial
D_1) + |p^j - t^j| \big) + (1/2) \log \big( d(t^j,\partial D_1) +
|p^j - t^j| \big)
\]
which is impossible. This contradiction proves the theorem.

\medskip

\noindent It remains to establish the assertion. For this, fix $ a
\in D_1 $ and use Proposition \ref{S} to infer that
\begin{equation} \label{X1}
d_{D_1} (p^j, a) \geq -(1/2) \log d(p^j, \partial D_1) - C_4
\end{equation}
for some uniform positive constant $ C_4 $. On the other hand,
\begin{equation} \label{X2}
d_{D_2} \big( f(p^j), f(a) \big) \leq -(1/2) \log d \big(
d(f(p^j), \partial D_2 \big) + C_5.
\end{equation}
for all $j$ large and a uniform constant $ C_5 > 0 $. Fixing $ a $
in $ D_1 $, using $  d_{D_1} (a, p^j) = d_{D_2} \big( f(p^j), f(a)
\big) $, and comparing the inequalities (\ref{X1}) and (\ref{X2}),
we get the required estimates. Hence the assertion. Thus Theorem
\ref{1} is completely proven. \qed

\medskip

\noindent The proof of Theorem \ref{2} relies on the following
lemma.

\begin{lem} \label{R} Let $ D$ be a Kobayashi hyperbolic
domain in $ \mathbf{C}^n$ with a subdomain $ D' \subset D$. Let $
p,q \in D'$, $ d_{D}(p,q) = a$ and $ b > a$. If $ D'$ satisfies
the condition $ B_{D} (q,b) \subset D'$, then the following two
inequalities hold:
\begin{eqnarray*}
d_{D'} (p,q) \leq \frac{1}{\tanh (b-a) } d_{D}(p,q), \\
F^K_{D'} (p, v) \leq \frac{1}{\tanh (b-a) } F^K_{D}(p,v).
\end{eqnarray*}
\end{lem}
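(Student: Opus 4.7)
The plan is to prove the infinitesimal inequality first by a direct rescaling argument on holomorphic discs, and then to integrate it along a nearly length-minimizing curve from $p$ to $q$ to deduce the distance inequality.

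For the infinitesimal bound, let $\phi \in \mathcal{O}(\Delta, D)$ with $\phi(0) = p$ and $\phi'(0) = \alpha v$ be an arbitrary competitor for $F^K_D(p, v)$, and set $r := \tanh(b - a)$. For $z \in \Delta$ with $|z| < r$, the Kobayashi decreasing property applied to $\phi$ yields
\[
d_D(q, \phi(z)) \leq d_D(q, p) + d_D(\phi(0), \phi(z)) \leq a + \tanh^{-1}|z| < b,
\]
so $\phi(z) \in B_D(q, b) \subset D'$. Thus $\phi(\Delta(0, r)) \subset D'$, and the rescaled disc $\psi(z) := \phi(rz)$ lies in $\mathcal{O}(\Delta, D')$ with $\psi(0) = p$ and $\psi'(0) = r\alpha v$. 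Hence $F^K_{D'}(p, v) \leq 1/(r\alpha)$, and taking the infimum over admissible $\phi$ gives $F^K_{D'}(p, v) \leq \tanh(b-a)^{-1} F^K_D(p, v)$.

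For the distance inequality I would invoke Royden's theorem that on a Kobayashi hyperbolic domain $d_D$ is the integrated form of $F^K_D$. Fix $\epsilon \in (0, b-a)$ and choose a piecewise $C^1$-curve $\gamma : [0, 1] \to D$ from $p$ to $q$ with $\int_0^1 F^K_D(\gamma(t), \dot\gamma(t))\, dt \leq a + \epsilon$. The tail estimate $d_D(\gamma(t), q) \leq \int_t^1 F^K_D(\gamma, \dot\gamma)\, ds \leq a + \epsilon < b$ shows that $\gamma(t) \in B_D(q, b) \subset D'$ for all $t$, so the entire curve lies in $D'$. Applying the infinitesimal inequality just proved at each $\gamma(t)$, with $d_D(\gamma(t), q)$ in place of $a$, gives the uniform pointwise bound
\[
F^K_{D'}(\gamma(t), \dot\gamma(t)) \leq \tanh(b - a - \epsilon)^{-1} F^K_D(\gamma(t), \dot\gamma(t)).
\]
Integrating over $[0, 1]$ gives $d_{D'}(p, q) \leq (a + \epsilon)/\tanh(b - a - \epsilon)$, and letting $\epsilon \to 0^+$ delivers $d_{D'}(p, q) \leq \tanh(b - a)^{-1} d_D(p, q)$.

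The argument is largely routine; the only point requiring some care is that the infinitesimal estimate must be applied uniformly along $\gamma$, which is ensured by the uniform bound $d_D(\gamma(t), q) \leq a + \epsilon$ combined with the monotonicity of $\rho \mapsto 1/\tanh \rho$. Neither smoothness nor convexity of $D'$ enters the argument; only the ball inclusion $B_D(q, b) \subset D'$ is used.
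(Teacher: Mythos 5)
Your proof is correct and complete: the disc-rescaling argument (using $d_D(q,\phi(z))\le a+\tanh^{-1}|z|<b$ to confine $\phi(\Delta(0,\tanh(b-a)))$ to $D'$) gives the infinitesimal bound, and integrating it along an almost-minimizing curve, which your tail estimate keeps inside $B_D(q,b)\subset D'$, yields the distance bound after letting $\epsilon\to 0^+$. The paper itself omits the proof and refers to \cite{Kim&Krantz-2008} and \cite{Kim&Ma-2003}, where essentially this same localization-by-shrinking-discs argument is carried out, so your route matches the intended one.
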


\noindent The reader is referred to \cite{Kim&Krantz-2008} (or
\cite{Kim&Ma-2003}) for a proof, but it should be noted that this
statement emphasizes an upper bound for $ d_{D'} $ in terms of $
d_D$. An estimate with the inequality reversed is an immediate
consequence of the definition of the Kobayashi metric.

\medskip

\noindent The second ingredient is an estimate for the Kobayashi
and the Carath\'{e}odory inner distance between two points in a
weakly pseudoconvex finite type domain $D$ in $ \mathbf{C}^2$ due
to Herbort (\cite{Herbort-2005}). To state this, let $ d(\cdot,
\partial D)$ be the Euclidean distance to the boundary and $ \rho
$ a smooth defining function for $ \partial D$. For $ a, b \in D$,
define
\begin{eqnarray*}
\rho^*(a,b) & = & \log \left ( 1 + \frac{ \tilde{d}(a,b)}{ d(a,
\partial D)} + \frac{| \langle L(a), a -b \rangle|}{ \tau ( a,
d(a, \partial D)) } \right) \\
L(a) & = & \left( - \frac{ \partial \rho} {\partial z_2}(a),
\frac{ \partial \rho} {\partial z_1}(a) \right) \\
d'(a,b) & = & \inf \big \{ \delta > 0 : a \in Q ( b, \delta) \big
\} \\
\tilde{d}(a,b) & = & \min \big \{ d'(a,b), |a-b| \big \},
\end{eqnarray*}
where $ \langle \cdot, \cdot \rangle $ denotes the standard
hermitian inner product in $ \mathbf{C}^2 $.
\medskip

\noindent The main result of \cite{Herbort-2005} that is needed
is:

\begin{thm} \label{X0}
Assume that $ D = \{ \rho < 0 \} \subset \mathbf{C}^2$ be a
bounded pseudoconvex domain with smooth boundary such that all
boundary points are of finite type. Then there exists a positive
constant $ C_* $ such that for any two points $ a, b \in D $
\[
C_* \big ( \rho^*(a,b) + \rho^*(b,a) \big) \leq c_D^i(a,b) \leq
d_D(a,b) \leq 1/C_* \big ( \rho^*(a,b) + \rho^*(b,a) \big).
\]
\end{thm}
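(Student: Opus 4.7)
The plan is to establish the displayed chain of inequalities as two independent estimates, since the middle inequality $c_D^i(a,b) \leq d_D(a,b)$ is automatic from the definitions: the pointwise bound $F^C_D \leq F^K_D$ integrates to $L^C_D(\gamma) \leq L^K_D(\gamma)$ along any path $\gamma$, and taking infima yields $c_D^i \leq d_D$. Thus the real content lies in the upper bound on $d_D$ and the lower bound on $c_D^i$ by a constant multiple of $\rho^*(a,b) + \rho^*(b,a)$.

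For the upper bound on $d_D$, I would exploit the Catlin bidisc construction recalled before Proposition \ref{S}. Since $\bigl(\Delta_\zeta^\delta \circ \phi^\zeta\bigr)^{-1}$ maps the unit bidisc biholomorphically onto $Q(\zeta,\delta)$ and a fixed rescaling of this bidisc lies inside $D$, the decreasing property of the Kobayashi metric under holomorphic mappings yields pointwise sharp infinitesimal estimates of the type
\[
F^K_D(z,v) \lesssim \frac{|v_T|}{\tau\bigl(z, d(z,\partial D)\bigr)} + \frac{|v_N|}{d(z,\partial D)},
\]
which is precisely the upper half of (\ref{S1}). The integrated bound would then follow by constructing an explicit piecewise smooth path from $a$ to $b$ that first moves along the inner normal from $a$ to a ``deep'' level where $d(\cdot,\partial D)$ is essentially constant, then travels tangentially at that scale until sitting in the column over $b$, and finally returns to $b$. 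The three logarithmic contributions produced by these segments match exactly the terms in $\rho^*(a,b) + \rho^*(b,a)$: the quotient $\tilde d(a,b)/d(a,\partial D)$ absorbs the tangential cost, while $|\langle L(a), a-b\rangle|/\tau(a,d(a,\partial D))$ absorbs the twisted normal displacement, with the symmetric contribution arising by reversing the roles of $a$ and $b$.

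The subtler direction is the lower bound $c_D^i(a,b) \gtrsim \rho^*(a,b) + \rho^*(b,a)$, which I would derive from matching pointwise lower bounds on $F^C_D$ and then integrate. The production of such lower bounds would rest on Catlin's construction of bounded plurisubharmonic weights with self-bounded complex gradient: for each reference point $a \in D$ one exhibits a plurisubharmonic function $\lambda_a$ on $D$ with $-1 \leq \lambda_a \leq 0$ whose complex Hessian at $a$ dominates $\delta^{-2}$ in the normal direction and $\tau(a,\delta)^{-2}$ in the tangential direction, where $\delta = d(a,\partial D)$. Inserting $\lambda_a$ as a weight in Hörmander's $L^2$-estimates for $\bar\partial$ (or applying the Ohsawa--Takegoshi extension theorem) produces bounded holomorphic functions on $D$ with prescribed value and prescribed anisotropic derivative at $a$; after normalization these are admissible test mappings in the definition of $F^C_D(a,\cdot)$ and yield the reverse infinitesimal estimate. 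Integrating $F^C_D$ along any path from $a$ to $b$ and extracting the logarithmic cost of exiting the Catlin bidisc at $a$ together with that of entering the one at $b$ produces the required lower bound on $c_D^i(a,b)$.

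The principal obstacle is the construction of the weight functions $\lambda_a$ with sharp anisotropic Hessian bounds; this requires Catlin's multitype machinery specialized to dimension two, together with delicate control of how the plurisubharmonicity transports under the coordinate change $\phi^\zeta$. A secondary but genuine difficulty is that the lower bound is for $c_D^i$ rather than merely $c_D$, so extremal-disc or peak-function arguments alone do not suffice; one must genuinely work at the infinitesimal level and then account for the asymmetry of $\rho^*$ by carefully tracking both endpoints during the integration, so that both $\rho^*(a,b)$ and $\rho^*(b,a)$ emerge from the bookkeeping rather than only a one-sided estimate.
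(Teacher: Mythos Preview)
The paper does not prove this theorem at all. Immediately before the statement the author writes ``The main result of \cite{Herbort-2005} that is needed is:'', and the theorem is then quoted verbatim as an external input, with no proof supplied. So there is no ``paper's own proof'' to compare against; the statement functions here purely as a black box used later in the proof of Lemma~\ref{O} and of Theorem~\ref{U}.

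That said, your outline is a reasonable high-level sketch of how Herbort's argument actually proceeds. The upper bound on $d_D$ does come from the Catlin bidisc embeddings and an explicit three-segment path of the normal--tangential--normal type you describe, and the lower bound on $c_D^i$ does ultimately rest on Catlin's bounded plurisubharmonic weights with anisotropic Hessian control, fed through $\bar\partial$-estimates to manufacture peak-type holomorphic functions that witness the infinitesimal Carath\'{e}odory metric from below. You have also correctly flagged the genuine technical load: the construction of the weights $\lambda_a$ with the right Hessian at scale $\big(\tau(a,\delta),\delta\big)$, and the bookkeeping needed so that integration produces the symmetrized quantity $\rho^*(a,b)+\rho^*(b,a)$ rather than a one-sided bound. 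None of this, however, is carried out in the present paper; if you want to see the details you must go to \cite{Herbort-2005} (and to \cite{Catlin-1989} for the underlying weight construction).
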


\medskip

\noindent \textit{Proof of Theorem \ref{2}:} Suppose that there
exists a $ C^0$-isometry $ f : D_1 \rightarrow D_2 $ with the
property that: there exists a sequence $ \{p^j \} \subset D_1 $
converging to $ p^0 \in \partial D_1 $ such that the corresponding
image sequence $ \{ f(p^j) \} \subset D_2 $ converges to the point
$ q^0 \in
\partial D_2 $. The proof involves several steps.

\medskip

\noindent \textbf{Step I:} $f$ extends continuously to a
neighbourhood of $ p^0 $ in $ \overline{D}_1 $. This is immediate
from Theorem \ref{1}.

\medskip

\noindent \textbf{Step II:} Pick a sequence $ \{a^j \} \subset D_1
$ that converges normally to the origin, i.e, $ a^j = (0, -
\delta_j) $ where $ \delta_j > 0 $. It follows from Step I that
the corresponding image sequence $ b^j = f(a^j) \rightarrow q^0
\in \partial D_2 $ as $ j \rightarrow \infty $. It will be useful
to briefly describe the scaling of domains $D_1$, $ D_2$ and the
corresponding model domains in terms of the base point $p^0$ ($
q^0 $ respectively) and the sequence $ \{ a^j \} $ ($ \{ b^j \}$
respectively). These will require some basic facts about the local
geometry of the domain $ D_1 $ in a small neighbourhood $U_1 $ of
$ p^0 \in \partial D_1 $ and that of a strongly pseudoconvex
domain.

\medskip

\noindent \textbf{Scaling the domain $D_1$ with respect to $ \{a^j
\} $:}

\medskip

\noindent Let $ \Delta^j : \mathbf{C}^2 \rightarrow \mathbf{C}^2 $
be a sequence of dilations defined by
\begin{equation*}
\Delta^j (z_1,z_2)  = \left( \frac{z_1} {
 \delta_j ^{1/2m}}, \frac{z_2}{ \delta_j } \right).
\end{equation*}
Then the domains $ D_1 ^j = \Delta^j ( D_1)$ converge in the
Hausdorff metric to
\[
D_{1, \infty} = \Big \{ (z_1, z_2) \in \mathbf{C}^2 : 2 \Re z_2 +
|z_1|^{2m} < 0 \Big \}.
\]

\noindent We claim that $ d_{D_1^j} \big( (0,-1), \cdot \big)
\rightarrow d_{D_{1, \infty}} \big( (0,-1), \cdot \big) $
uniformly on compacts of $ D_{1, \infty} $. This was done in
\cite{Mahajan&Verma} and we include it here for completeness.
First, it is natural to prove convergence at the infinitesimal
level:

\begin{lem} \label{K1} For $(s,v) \in D_{1,\infty} \times
\mathbf{C}^2$,
\begin{equation*}
\displaystyle\lim_{j \rightarrow \infty} F^K_{D_1^j} (s,v) =
F^K_{D_{1,\infty}} (s,v). \label{7.1}
\end{equation*}
Moreover, the convergence is uniform on compact sets of $
D_{1,\infty} \times \mathbf{C}^2$.
\end{lem}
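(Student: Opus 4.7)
The plan is to prove the two one-sided inequalities $\limsup_j F^K_{D_1^j}(s,v) \le F^K_{D_{1,\infty}}(s,v)$ and $F^K_{D_{1,\infty}}(s,v) \le \liminf_j F^K_{D_1^j}(s,v)$ separately, and then upgrade the resulting pointwise convergence to uniform convergence on compacta. The key preliminary observation is that, writing $\rho$ for a defining function of $D_1$ near the origin, the rescaled defining function $\rho_j := \delta_j^{-1} \rho \circ (\Delta^j)^{-1}$ tends in $C^2_{\mathrm{loc}}$ to $\rho_\infty(z) := 2\Re z_2 + |z_1|^{2m}$, with error controlled by a positive power of $\delta_j$. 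Consequently every compact $K \subset D_{1,\infty}$ lies in $D_1^j$ for $j$ large, while any sequence $z^j \in D_1^j$ remaining in a fixed bounded region can accumulate only in $\overline{D_{1,\infty}}$.

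The upper bound is the easier half. Given $\varepsilon > 0$, choose $\psi \in \mathcal{O}(\Delta, D_{1,\infty})$ with $\psi(0) = s$, $\psi'(0) = \alpha v$, and $\alpha^{-1} < F^K_{D_{1,\infty}}(s,v) + \varepsilon$. For $r<1$ the image $\psi(\overline{r\Delta})$ is compactly contained in $D_{1,\infty}$, hence in $D_1^j$ for all large $j$; the disc $\zeta \mapsto \psi(r\zeta)$ then lies in $\mathcal{O}(\Delta, D_1^j)$ with derivative $r\alpha v$ at $0$, so $F^K_{D_1^j}(s,v) \le (r\alpha)^{-1}$. Letting $j \to \infty$, then $r \to 1$ and $\varepsilon \to 0$, yields the upper bound.

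For the lower bound, choose near-extremal discs $f_j \in \mathcal{O}(\Delta, D_1^j)$ with $f_j(0) = s$, $f_j'(0) = \alpha_j v$, and $\alpha_j^{-1} \to L := \liminf_j F^K_{D_1^j}(s,v)$. The main obstacle is that $D_{1,\infty}$ is unbounded, so uniform boundedness of $\{f_j\}$ on compacta of $\Delta$ is not automatic; I would exploit the fact that the Cayley-type map $\Phi(z_1,z_2) := \bigl(2^{1/(2m)} z_1 (1-z_2)^{-1/m},\, (1+z_2)/(1-z_2)\bigr)$ is a biholomorphism of $D_{1,\infty}$ onto the bounded complex ellipsoid $E_m := \{|w_1|^{2m} + |w_2|^2 < 1\}$ (a direct computation gives $1 - |w_1|^{2m} - |w_2|^2 = -2(2\Re z_2 + |z_1|^{2m})|1-z_2|^{-2}$). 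Because of the local convergence of the defining functions, $\Phi(D_1^j)$ eventually lies inside a slightly enlarged bounded set $E_m^+$ on which $\Phi^{-1}$ is still well defined; the composed maps $\Phi \circ f_j$ then form a uniformly bounded family on $\Delta$, so Montel's theorem extracts a locally uniformly convergent subsequence $\Phi \circ f_{j_k} \to g$. Pushing back, $f_{j_k} \to f := \Phi^{-1} \circ g \in \mathcal{O}(\Delta, \overline{D_{1,\infty}})$ with $f(0) = s$ and $f'(0) = L^{-1} v$. A maximum-principle argument applied to the negative PSH exhaustion $(|w_1|^{2m} + |w_2|^2 - 1) \circ \Phi$ of $D_{1,\infty}$ then forces $f(\Delta) \subset D_{1,\infty}$, whence $F^K_{D_{1,\infty}}(s,v) \le L$.

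For uniform convergence on compacta of $D_{1,\infty} \times \mathbf{C}^2$, I would combine the established pointwise convergence with equicontinuity of the family $\{F^K_{D_1^j}\}$ on such compacta. The latter follows from the Berteloot-type localization estimates used in the proof of Proposition \ref{S}, transported through the rescaling: these give uniform two-sided control of $F^K_{D_1^j}$ on compact subsets of $D_{1,\infty} \times \mathbf{C}^2$ by continuous functions of $(z,v)$ independent of $j$. A standard Arzel\`a--Ascoli argument then upgrades pointwise convergence to local uniform convergence, completing the proof.
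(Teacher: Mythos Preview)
Your upper-bound argument is fine and matches the paper's. The serious problem is in the lower bound, specifically in your normality step. You assert that ``because of the local convergence of the defining functions, $\Phi(D_1^j)$ eventually lies inside a slightly enlarged bounded set $E_m^+$.'' This is not true in general, and it is exactly the difficulty the lemma is designed to overcome. The domains $D_1^j=\Delta^j(D_1)$ agree with $D_{1,\infty}$ only on compacta; globally $D_1^j$ has diameter of order $\delta_j^{-1}$ and, since the hypotheses on $D_1$ are purely local near $p^0$, there is no reason for $D_1^j$ to avoid the hyperplane $\{z_2=1\}$ where your Cayley map $\Phi$ blows up, nor for $\Phi(D_1^j)$ to be bounded. (For instance, any point of $D_1$ with small positive $\Re z_2$ gets sent by $\Delta^j$ arbitrarily close to $\{z_2=1\}$.) So $\Phi\circ f_j$ need not be a well-defined bounded family, and Montel does not apply.

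The paper handles this with a genuinely different, and necessary, ingredient: it pulls the competing discs back to $D_1$ via $(\Delta^j)^{-1}$ and invokes the Berteloot--Coeur\'e attraction lemma (Proposition~1 of \cite{Berteloot&Coeure-1991}), which says that an analytic disc in $D_1$ whose center lies in a Catlin bidisc $Q(p^0,C_1\delta_j)$ has its image on $\Delta(0,r)$ trapped in $Q(p^0,C_2C_1\delta_j)$. After rescaling, this gives a uniform bound on $h^j(\Delta(0,r))$ that is independent of the global shape of $D_1$. There is no shortcut around this: some form of attraction/localisation near the finite-type point is needed, and the Cayley transform to the ellipsoid does not supply it. Your approach to uniform convergence is also weaker than the paper's: rather than claiming equicontinuity of $\{F^K_{D_1^j}\}$ (which the Berteloot estimates near $\partial D_1$ do not obviously give on interior compacta), the paper simply runs the whole argument along moving sequences $(s^j,v^j)\to(s,v)$ from the start, which together with joint continuity of $F^K_{D_{1,\infty}}$ yields uniform convergence directly.
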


\begin{proof} Let $ S \subset D_{1,\infty}$ and $ G \subset \mathbf{C}^2$ be
compact and suppose that the desired convergence does not occur.
Then there is a $ \epsilon_0 > 0$ such that after passing to a
subsequence, if necessary, we may assume that there exists a
sequence of points $ \{ s^j \} \subset S $ which is relatively
compact in $ D_1^j$ and a sequence $ \{ v^j \} \subset G$ such
that
\begin{equation*}
\big| F^K_{ D_1^j }( s^j, v^j ) - F^K _{D_{1,\infty}}( s^j, v^j )
\big|
> \epsilon_0
\end{equation*}
for $j$ large. Additionally, $ s^j \rightarrow s \in S $ and $ v^j
\rightarrow v \in G $ as $ j \rightarrow \infty$. Since $
F^K_{D_{1,\infty}} ( s, \cdot) $ is homogeneous, we may assume
that $ | v^j| = 1 $ for all $j$. Observe that $ D_{1,\infty} $ is
complete hyperbolic and hence taut. The tautness of $
D_{1,\infty}$ implies via a normal family argument that $
F^K_{D_{1,\infty}} ( \cdot, \cdot) $ is jointly continuous, $ 0 <
F^K_{D_{1,\infty}} (s,v) < \infty $ and there exists a holomorphic
extremal disc $ g : \Delta \rightarrow D_{1,\infty} $ that by
definition satisfies $ g(0) = s, {g}'(0) = \mu v $ where $ \mu
> 0 $ and $ F^K_{D_{1,\infty}} (s,v)= 1/{\mu} $. Hence
\begin{equation}
\big| F^K_{ D _1^j }( s^j, v^j ) - F^K_{D_{1,\infty}} (s,v) \big|
> \epsilon_0 / 2     \label{k1}
\end{equation}
for $j$ sufficiently large. Fix $ \delta \in (0,1) $ and define
the holomorphic mappings $ g^j : \Delta \rightarrow \mathbf{C}^2 $
by
\[
g^j (z) = g \left( (1 - \delta) z \right) + (s^j - s) + \mu ( 1 -
\delta)z( v^j - v ).
\]
Since the image $ g \left(( 1 - \delta)\Delta \right)$ is
compactly contained in $ D_{1,\infty}$ and $ s^j \rightarrow s,
v^j \rightarrow v$ as $ j \rightarrow \infty$, it follows that $
g^j: \Delta \rightarrow D_1^j $ for $j$ large. Also, $ g^j(0) =
g(0) + s^j - s = s^j$ and $ (g^j)'(0) = ( 1 - \delta) g'(0) + \mu
(1 - \delta)(v^j - v) = \mu (1 - \delta) v^j $. By the definition
of the infinitesimal metric it follows that
\begin{equation*}
F^K_{ D_1^j }( s^j, v^j ) \leq \frac{1}{ \mu (1 - \delta) } =
\frac{F^K_{D_{1,\infty}} (s,v)}{ (1 - \delta) }.
\end{equation*}
Letting $ \delta \rightarrow 0^+ $ yields
\begin{equation}
\displaystyle \limsup_{j \rightarrow \infty}F^K_{ D_1^j }( s^j,
v^j ) \leq F^K_{D_{1,\infty}} (s,v). \label{4.4}
\end{equation}
Conversely, fix $\epsilon > 0$ arbitrarily small. By definition,
there are holomorphic mappings $ h^j: \Delta \rightarrow D_1^j$
satisfying $ h^j(0) = s^j$ and $ (h^j)'(0) = \mu^j $ where $ \mu^j
> 0 $ and
\begin{equation}
F^K_{ D_1^j }( s^j, v^j ) \geq \frac{1}{ \mu^j} - \epsilon
\label{4.5}
\end{equation}
The sequence $ \{ h^j \} $ has a subsequence that converges to a
holomorphic mapping $ h : \Delta \rightarrow D_{1,\infty}$
uniformly on compact sets of $ \Delta $. To see this, consider
$\Delta(0,r)$ for $r \in (0,1)$. Now, $ \phi^{p^0} = \phi^{(0,0)}
= id_{\mathbf{C}^2}$ and $ \tau( p^0, \delta_j) = \tau \big(
(0,0), \delta_j \big) \approx (\delta_j)^{1/2m} $. Further, we may
assume that $S$ is compactly contained in $ \Delta(0,C_1^{1/2m})
\times \Delta (0,C_1)$ for some $ C_1 > 1$. As a consequence
\[
(\Delta^j)^{-1}(s^j) \in Q (p^0, C_1 \delta_j).
\]
for all $j$. Also, note that
\[
( \Delta^j)^{-1} ( s^j ) \rightarrow p^0 \in \partial D_1
\]
as $ j \rightarrow \infty $. Now, applying Proposition $1$ in
\cite{Berteloot&Coeure-1991} to the mappings
\[
(\Delta^j)^{-1} \circ h^j : \Delta \rightarrow D_1
\]
shows that there exists a uniform positive constant $ C_2= C_2(r)
$ with the property that
\[
(\Delta^j )^{-1} \circ h^j \big(\Delta(0, r)\big) \subset Q \big
(p^0, C_2 C_1 \delta_j \big)
\]
or equivalently that
\[
h^j \big( \Delta(0,r)\big ) \subset \Delta \left(0, (C_1
C_2)^{1/2m} \right) \times \Delta(0, C_1 C_2 ).
\]
Therefore, $ \{ h^j \} $ is a normal family. Hence, the sequence $
\{ h^j \}$ has a subsequence that converges uniformly on compact
sets of $ \Delta $ to a holomorphic mapping $ h : \Delta
\rightarrow \mathbf{C}^2 $ or $ h \equiv \infty $. The latter
cannot be true since $ h(0) = s $. It remains to show that $ h :
\Delta \rightarrow D_{1,\infty}$. For this note that $ D_1^j $ are
defined in a neighbourhood of the origin by
\[
2 \delta_j \Re z_2 + \delta_j |z_1|^{2m} + o \big( \delta_j |
z_1|^{2m} + \delta_j \Im z_2 \big) < 0.
\]
 Thus, for $w \in \Delta(0,r)$ and $ r \in (0,1)$
\[
2 \Re \big( h^j_2 (w) \big) + |h^j_1(w) | ^{2m} +
\frac{1}{\delta_j} o \left( \delta_j | h^j_1(w)|^{2m} + \delta_j
\Im \big(h^j_2(w)\big) \right) < 0.
\]
Letting $ j \rightarrow \infty$ yields
\[
2 \Re \big(h_2(w)\big) + |h_1(w)|^{2m} \leq 0
\]
or equivalently that $ h ( \Delta (0,r)) \subset \overline{
D}_{1,\infty}$. Since $ r \in (0,1) $ was arbitrary, it follows
that $ h( \Delta ) \subset \overline{ D}_{1,\infty} $. Since $
h(0,0) = s $ the maximum principle forces that $ h : \Delta
\rightarrow D_{1,\infty} $. Note that
\[
h'(0) = \displaystyle \lim_{j \rightarrow \infty} (h^j)'(0) =
\displaystyle \lim_{j \rightarrow \infty} \mu ^j v^j= \mu v
\]
for some $ \mu > 0$. It follows from the definition of the
infinitesimal metric that
\[
F^K_{D_{1,\infty}} (s,v) \leq 1/ {\mu}.
\]
The above observation together with (\ref{4.5}) yields
\begin{equation}
\displaystyle \liminf_{j \rightarrow \infty}F^K_{ D_1^j }( s^j,
v^j ) \geq F^K_{D_{1,\infty}} (s,v). \label{4.6}
\end{equation}
Combining (\ref{4.4}) and (\ref{4.6}) shows that
\begin{equation*}
\displaystyle \lim_{j \rightarrow \infty}F^K_{ D_1^j }( s^j, v^j )
= F^K_{D_{1,\infty}} (s,v)
\end{equation*}
which contradicts the assumption (\ref{k1}) and proves the lemma.
\end{proof}

\noindent To control the integrated Kobayashi distance on domains
$ D_1^j $, we first note the following:

\begin{lem} \label{M} Let $ D\subset \mathbf{C}^n $ be a bounded domain
and $ p^0 \in \partial D $ be a local holomorphic peak point. Then
for any fixed $ R > 0 $ and every neighbourhood $ U $ of $ p^0 $
there exists a neighbourhood $ V \subset U $ of $ p^0 $ with $ V$
relatively compact in $ U $ such that for all $ z \in V \cap D $,
we have
\[
B_{D}(z, c R ) \subset B_{U \cap D} (z, R) \subset B_{D}(z, R)
\]
where $ c > 0 $ is a constant independent of $ z \in V \cap D $.
\end{lem}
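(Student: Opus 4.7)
The right-hand inclusion $B_{U\cap D}(z,R) \subset B_D(z,R)$ requires no peak-point hypothesis: the holomorphic inclusion $U \cap D \hookrightarrow D$ contracts the Kobayashi pseudodistance, giving $d_D \leq d_{U\cap D}$ on $U \cap D$. For the nontrivial inclusion $B_D(z,cR) \subset B_{U\cap D}(z,R)$, the plan is to use the local holomorphic peak function at $p^0$ twice: first to localize analytic discs based near $p^0$, and then to trap Kobayashi balls of small radius within a neighbourhood of $p^0$.

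After possibly shrinking $U$, fix a local holomorphic peak function $h \in \mathcal{O}(U \cap D) \cap C(\overline{U \cap D})$ with $h(p^0)=1$ and $|h|<1$ on $\overline{U\cap D} \setminus \{p^0\}$. Pick nested neighbourhoods $V \Subset W \Subset U$ of $p^0$ and constants $0 < \varepsilon_1 < \varepsilon_2$ so that $|h| \geq 1-\varepsilon_1$ on $\overline{V\cap D}$ and $|h| \leq 1-\varepsilon_2$ on $\overline{D} \cap (U \setminus W)$. The first key step is a disc-localization: there exists $r_0 = r_0(\varepsilon_1,\varepsilon_2) > 0$ such that every analytic disc $\phi : \Delta \to D$ with $\phi(0) \in V \cap D$ maps $\Delta(0, r_0)$ into $W \cap D$. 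Letting $\Omega$ be the connected component of $\phi^{-1}(U \cap D)$ containing the origin, the function $\log|h \circ \phi|$ is subharmonic on $\Omega$, satisfies $\log|h \circ \phi(0)| \geq \log(1-\varepsilon_1)$, and is at most $\log(1-\varepsilon_2)$ along $\partial\Omega \cap \Delta$ (where $\phi$ leaves $U$). A two-constants theorem argument then forces the harmonic measure from $0$ of that boundary piece to be uniformly bounded away from $1$ (depending only on $\varepsilon_1,\varepsilon_2$), which propagates to a uniform lower bound $|h \circ \phi| \geq 1-\varepsilon_2$ on $\Delta(0, r_0)$ and thereby traps $\phi$ inside $W$ there.

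For the passage to distances, the disc-localization yields the infinitesimal comparison $F^K_{U\cap D}(z,v) \leq r_0^{-1} F^K_D(z,v)$ for $z \in V \cap D$, obtained by reparametrizing each near-extremal $D$-disc by $s \mapsto \phi(r_0 s)$. To integrate this along a Kobayashi chain from $z$ to $w$ with $d_D(z,w) < cR$, I would invoke the peak function a second time: a standard plurisubharmonic construction in the spirit of Sibony yields a lower bound of the form $d_D(z,w) \gtrsim \log\bigl(1/(1-|h(w)|)\bigr) - O(1)$ for $z \in V \cap D$ and $w \in U \cap D$, so that $d_D(z,w) < cR$ with $cR$ small forces $|h(w)| \geq 1-\varepsilon_2$ and hence $w \in W \cap D$. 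The same estimate traps every intermediate centre of a near-optimal Kobayashi chain inside $W \cap D$, so repeated application of the disc-localization produces a chain in $U \cap D$ whose total hyperbolic length is bounded by a constant multiple of $cR$; choosing $c$ sufficiently small yields $d_{U\cap D}(z,w) < R$. The main obstacle is establishing both quantitative ingredients — the uniform disc-localization constant $r_0$ from the harmonic-measure estimate, and the Sibony-type lower bound on $d_D$ from a plurisubharmonic barrier — with enough uniformity in the basepoint $z$ and robustness against the a priori irregular geometry of $\partial\Omega \cap \Delta$ to handle Kobayashi chains of arbitrary length.
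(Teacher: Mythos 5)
Your overall strategy is viable and uses the peak function at the same two junctures as the paper, but it takes a harder route at both. For the infinitesimal comparison $F^K_{U\cap D}\le c^{-1}F^K_D$ near $p^0$, the paper simply cites the localization lemma of Royden and Graham; your disc localization via the two-constants theorem is essentially a proof of that lemma, so nothing is wrong there, only longer. The real divergence is in the integrated step. Since $d_D$ is defined in this paper as the infimum of integrated Kobayashi lengths of paths, the paper takes a path in $D$ from $z$ to a point outside $\overline{B_{U\cap D}(z,R)}$ and cuts it at the first exit time from $B_{U\cap D}(z,R)$; for this it only needs the trapping statement $B_{U\cap D}(z,R)\subset V_1\cap D$ for the \emph{localized} distance, which is immediate because the peak function maps $U\cap D$ holomorphically into $\Delta$, whence $d_{U\cap D}(z,p)\ge d_{hyp}\bigl(g(z),g(p)\bigr)\to\infty$ as $z\to p^0$ for $p$ outside $V_1$. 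Your chain-based argument instead needs trapping for the \emph{global} distance $d_D$, and since $h$ lives only on $U\cap D$ this forces the extra plurisubharmonic barrier extended to all of $D$ --- an ingredient the paper's first-exit trick avoids entirely.

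One step of yours is wrong as literally written: the displayed estimate $d_D(z,w)\gtrsim \log\bigl(1/(1-|h(w)|)\bigr)-O(1)$ for $z\in V\cap D$ cannot hold (take $w=z$ near $p^0$: the left side vanishes while the right side is large), and even formally it points the wrong way --- combined with $d_D(z,w)<cR$ it would bound $|h(w)|$ \emph{away} from $1$, the opposite of the trapping you need. The usable barrier estimate is two-sided, of the form $d_D(z,w)\ge \tanh^{-1}u(z)-\tanh^{-1}u(w)$ for a log-plurisubharmonic extension $u$ of $|h|$ to all of $D$ with $u<1$; the trapping then comes from $\tanh^{-1}u(z)$ being large for $z\in V\cap D$, forcing $\tanh^{-1}u(w)$ to be large and hence $w\in W\cap D$. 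With that corrected, and with the bookkeeping you already flag handled (intermediate chain centres are only known to lie in $W$, so the disc localization must be arranged for centres in $W$; and the reparametrization ratio $\tanh^{-1}(|a_i|/r_0)/\tanh^{-1}|a_i|$ is uniformly bounded only when the $|a_i|$ stay well below $r_0$, which choosing $c$ small ensures), your argument closes, but the paper's path-plus-first-exit version is both shorter and needs strictly fewer ingredients.
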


\begin{proof} Let $U$ be a  neighbourhood of $p^0$ and $ g \in
\mathcal{A}(U \cap D)$, the algebra of continuous functions on the
closure of $U \cap D$ that are holomorphic on $ U \cap D$, such
that $ g(p^0) = 1$ and $ | g(p)| < 1 $ for $ p \in \overline{ U
\cap D} \setminus \{p^0 \} $. Fix $ \epsilon > 0$. Then there
exists a neighbourhood $ V_1 \subset U$ of $ p^0$ such that
\begin{equation*}
 F^K_{D}(z, v ) \leq F^K_{U \cap D} (z,v) \leq ( 1 +
\epsilon) F^K_{D}(z, v)
\end{equation*}
for $ z \in V_1 \cap D $ and $v$ a tangent vector at $ z$. This is
possible by the localisation property of the Kobayashi metric (see
for example Lemma 2 in \cite{Royden-1971} or \cite{Graham-1975}).

\medskip

\noindent The first inequality evidently implies that $ B_{U \cap
D} (z, R) \subset B_{D}(z, R) $ for all $ z  \in V_1 \cap D $ and
all $R > 0$. For the lower estimate the following observation will
be needed. For every $ R > 0$ there is a neighbourhood $ V \subset
V_1 $ of $ p^0$ with the property that if $ z \in V \cap D $ then
$ B_{U \cap D} (z, R) \subset V_1 \cap D$. For this it suffices to
show that
\[
\displaystyle\lim_{z \rightarrow p^0 } d_{ U \cap D } ( z, ( U
\cap D) \setminus \overline{V_1 \cap D} ) = + \infty.
\]
Indeed for every $ p \in ( U \cap D) \setminus \overline{V_1 \cap
D}$,
\[
d_{ U \cap D} (z, p) \geq d_{\Delta} ( g(z), g(p)) \rightarrow +
\infty
\]
as $ z \rightarrow p^0 $ since $ g(p^0) = 1 $ and $ |g| < 1 $ on $
( U \cap D) \setminus \overline{V_1 \cap D}$. This proves the
claim.

\medskip

\noindent Now for a given $ R > 0 $ let $ V $ be a sufficiently
small neighbourhood of $ p^0$ so that
\[
B_{ U \cap D} (z, R) \subset V_1 \cap D
\]
if $ z \in V \cap D $. Pick $ p \in D$ in the complement of the
closure of $ B_{U \cap D} (z, R) $ and let $ \gamma : [0,1]
\rightarrow D $ be a differentiable path with $ \gamma(0) = z $
and $ \gamma (1) = p$. Then there is a $ t_0 \in (0,1) $ such that
$ \gamma( [ 0, t_0 ) ) \subset B_{U \cap D} (z, R) $ and $ \gamma
(t_0 ) \in
\partial B_{U \cap D} (z, R) $. Hence
\begin{eqnarray*}
\int_0^1 { F^K_{D} ( \gamma(t), \dot{\gamma}(t)) dt } & \geq &
\int_0^{t_0} { F^K_{D} ( \gamma(t), \dot{\gamma}(t)) dt } \\
& \geq & 1/ ( 1 + \epsilon ) \int_0^{t_0} { F^K_{U \cap D} (
\gamma(t), \dot{\gamma}(t)) dt } \\
& \geq & 1/ ( 1 + \epsilon ) \  d_{ U \cap D} ( z, \gamma(t_0) ) =
R / ( 1 + \epsilon )
\end{eqnarray*}
which implies that $ d_{D}(z, p) \geq R/ ( 1 + \epsilon ) $. In
other words,
\begin{equation*}
B_{D} \big(z, R/ (2( 1 + \epsilon) ) \big) \subset B_ { U \cap D}
(z, R) \label{Q2}
\end{equation*}
if $ z \in V \cap D $. Finally observe that
\[
B_{D} \big(z, R/ (2( 1 + \epsilon) ) \big) \subset B_ { U \cap D}
(z, R)  \subset B_D(z, R)
\]
for all $ z \in V \cap D $.
\end{proof}

\begin{lem} \label{O}
For all $ R>0 $ and for all $j$ large, $ B_{D_1^j} \big( (0,-1), R
\big) $ is compactly contained in $ D_{1, \infty} $.
\end{lem}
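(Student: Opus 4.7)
The plan is to argue by contradiction, combining the localisation provided by Lemma \ref{M} with a Lemma \ref{K1}-type convergence of the infinitesimal Kobayashi metrics of the \emph{localised} scaled domains, together with tautness of $D_{1,\infty}$.

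Since $\partial D_1$ is $C^\infty$-smooth weakly pseudoconvex of finite type at $p^0=(0,0)$, standard results for finite type points in $\mathbf{C}^2$ provide a local holomorphic peak function at $p^0$, so Lemma \ref{M} applies. Given $R>0$, Lemma \ref{M} (with radius parameter $R/c$, where $c>0$ is its fixed constant) produces a neighbourhood $V\subset U$ of $p^0$ such that
\[
B_{D_1}(z,R)\subset B_{U\cap D_1}(z,R/c)
\]
for every $z\in V\cap D_1$. Since $(0,-\delta_j)\to p^0$, for all $j$ large we have $(0,-\delta_j)\in V\cap D_1$, and applying the biholomorphism $\Delta^j$ yields
\[
B_{D_1^j}\bigl((0,-1),R\bigr)\subset B_{\Omega^j}\bigl((0,-1),R/c\bigr),\qquad\Omega^j:=\Delta^j(U\cap D_1).
\]
The normal family argument of Lemma \ref{K1}, with $D_1^j$ replaced by $\Omega^j$, then gives $F^K_{\Omega^j}\to F^K_{D_{1,\infty}}$ uniformly on compact subsets of $D_{1,\infty}\times\mathbf{C}^2$. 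Moreover $D_{1,\infty}$ is complete hyperbolic (see \cite{Ma-1995}) and hence taut, so $K:=\overline{B_{D_{1,\infty}}((0,-1),R/c+1)}$ is a compact subset of $D_{1,\infty}$.

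Suppose the conclusion fails. Passing to a subsequence, there exist $z^j\in B_{\Omega^j}((0,-1),R/c)$ with no limit point in $D_{1,\infty}$; in particular $z^j\notin K$ for $j$ large. Any piecewise $C^1$ path $\gamma^j$ in $\Omega^j$ from $(0,-1)$ to $z^j$ has a first exit time $t_0^j$ from $K$, and the initial piece $\gamma^j|_{[0,t_0^j]}$ lies in $K$ with endpoint on $\partial K$. The uniform convergence yields $F^K_{\Omega^j}\geq (1-\epsilon) F^K_{D_{1,\infty}}$ on $K\times\mathbf{C}^2$ for any fixed $\epsilon>0$ and all $j$ large, so the Kobayashi length in $\Omega^j$ of this initial piece is at least $(1-\epsilon)\,d_{D_{1,\infty}}((0,-1),\partial K)=(1-\epsilon)(R/c+1)$. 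Choosing $\epsilon$ small, this exceeds $R/c$, contradicting $z^j\in B_{\Omega^j}((0,-1),R/c)$.

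The main obstacle is the convergence $F^K_{\Omega^j}\to F^K_{D_{1,\infty}}$ for the localised sequence. The proof of Lemma \ref{K1} pulls holomorphic discs $h^j:\Delta\to D_1^j$ back via $(\Delta^j)^{-1}$ into $D_1$ and invokes the estimate of \cite{Berteloot&Coeure-1991} at $p^0$ to extract a normal family; one must verify that the same pullback into $U\cap D_1$ still produces a normal family when the target is $\Omega^j$, which again follows from the presence of a local holomorphic peak function at $p^0$.
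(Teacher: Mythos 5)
Your argument is correct in outline but follows a genuinely different route from the paper's. Both proofs begin identically, using the peak function at $p^0$ and Lemma \ref{M} to pass to the localised balls. After that the paper works hard: it rules out escape to infinity by combining Herbort's lower bound for $d_{U_1 \cap D_1}$ (Theorem \ref{X0}) with explicit computations in Catlin's coordinates (the estimates on $\tau(a^j,\delta_j)$ and $d^l(a^j)$) to show the scaled balls are uniformly Euclidean-bounded, and it rules out clustering at finite boundary points of $D_{1,\infty}$ by integrating the normal component of Berteloot's stable estimate for $F^K_{D_1^j}$ to produce a divergent $-\tfrac{1}{2}\log d(z^j,\partial D_1^j)$ term. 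You replace both halves by a single soft first-exit-time argument: once $F^K_{\Omega^j}\to F^K_{D_{1,\infty}}$ locally uniformly and $D_{1,\infty}$ is Kobayashi complete, any path from $(0,-1)$ to a point outside $K=\overline{B_{D_{1,\infty}}((0,-1),R/c+1)}$ already pays length greater than $R/c$ inside $K$. This is valid: the first exit point lies on $\partial K$, where $d_{D_{1,\infty}}((0,-1),\cdot)=R/c+1$, and the comparison $F^K_{\Omega^j}\geq(1-\epsilon)F^K_{D_{1,\infty}}$ on $K\times\mathbf{C}^2$ follows from uniform convergence on $K$ times the unit sphere together with the positive lower bound for $F^K_{D_{1,\infty}}$ there. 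Your approach is shorter and more robust, using nothing about the boundary geometry beyond what already enters Lemma \ref{K1}; the paper's yields explicit quantitative bounds on the Euclidean size of the scaled balls, though that extra information is not used elsewhere.

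The only incomplete step is the one you flag: the convergence $F^K_{\Omega^j}\to F^K_{D_{1,\infty}}$ for $\Omega^j=\Delta^j(U\cap D_1)$ is not literally Lemma \ref{K1}. It does go through --- a disc into $U\cap D_1$ is in particular a disc into $D_1$, so the Berteloot--Coeur\'{e} attraction argument applies verbatim to the pullbacks $(\Delta^j)^{-1}\circ h^j$, and $\Delta^j(U)$ exhausts $\mathbf{C}^2$, so every compact subset of $D_{1,\infty}$ is eventually contained in $\Omega^j$ and the upper semicontinuity direction also survives. But you can sidestep the issue entirely: the same exit-time argument applied to $D_1^j$ itself, with $K=\overline{B_{D_{1,\infty}}((0,-1),R+1)}$ and Lemma \ref{K1} exactly as stated, shows $B_{D_1^j}((0,-1),R)\subset K$ for all $j$ large, so that neither Lemma \ref{M} nor the localisation is actually needed in your version.
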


\begin{proof} First note that
\[
B_{D_1^j} \big( (0,-1), R \big) = \Delta^j \big(B_{D_1}( a^j, R)
\big).
\]
Since $ p^0 \in \partial D_1 $ is a local holomorphic peak point,
by lemma \ref{M}, we see that there exists a neighbourhood $ V
\subset U_1 $ of $ p^0 $ with $ V$ relatively compact in $ U_1 $
and a uniform positive constant $c$ such that for all $ z \in V
\cap D_1 $,
\[
B_{D_1}(z, c R ) \subset B_{U_1 \cap D_1} (z, R) \subset
B_{D_1}(z, R)
\]
and therefore it will suffice to show that $ \Delta^j \big(B_{U_1
\cap D_1}( a^j, R) \big) $ is compactly contained in $ D_{1,
\infty} $. The proof now divides into two parts. In the first part
we show that the sets $ \Delta^j \big(B_{U_1 \cap D_1}( a^j, R)
\big) $ cannot accumulate at the point at infinity in $
\partial D_{1,\infty} $ and in the second part we show that the
sets $ B_{D_1^j} \big( (0,-1), R \big) $ do not cluster at any
finite boundary point. Assume that $ p \in B_{U_1 \cap D_1}( a^j,
R)$. Using Herbort's lower estimate for the Kobayashi metric gives
us
\begin{equation*}
C_* \left( \rho^* (a^j,p) + \rho^* (p, a^j)\right) \leq d_{U_1
\cap D_1} (a^j,p).
\end{equation*}
As a consequence
\[
\tilde{d}(a^j,p) < \exp( R/ { C_*} )d(a^j, \partial D_1)
\]
which in turn implies that

\begin{itemize}

\item either $ |a^j - p| < d( a^j, \partial D_1) \exp( R/{C_*}) $
or

\medskip

\item for each $j$, there exists a $ \delta_j \in ( 0, d( a^j,
\partial D_1)\exp( R/{C_*}) ) $ such that $ a^j \in Q (p, \delta_j) $.

\end{itemize}
It follows from Proposition 1.7 in \cite{Catlin-1989} that there
exists a uniform positive constant $C$ such that for each $j$, the
following holds: if $ a^j \in Q (p, \delta_j)$, then $ p \in Q
\big( a^j, C \delta_j \big) $. Hence, the second statement above
can be rewritten as: there exists a positive constant $C$ such
that for each $j$, there exists a $ \delta_j \in ( 0, d( a^j,
\partial D_1)\exp( R/{C_*}) ) $ with the property that
\[
p \in (\phi^{a^j})^{-1} \Big ( \Delta(0, \tau( a^j, C \delta_j) )
\times \Delta(0, C \delta_j ) \Big).
\]
Said differently, $ B_{U_1 \cap D_1}( a^j, R) $ is contained in
the union
\begin{eqnarray*}
B_{U_1 \cap D_1}( a^j, R) \subset  B \Big( a^j, d(a^j,
\partial D_1) \exp{( R/ C_*)} \Big) \cup (\phi^{a^j})^{-1} \Big (
\Delta(0, \tau( a^j, C \delta_j) ) \times \Delta(0, C \delta_j )
\Big)
\end{eqnarray*}
with $ \delta_j $ as described above. Now,
\[
\Delta^j \left \lbrace (z_1,z_2) \in \mathbf{C}^2: | z_1 - a^j_1 |
^2 + | z_2 - a^j_2 | ^2 < \big(d( a^j, \partial D_1)\big)^2
\exp(2R/{C_*}) \right \rbrace =
\]
\begin{equation}  \label{O7}
\left \lbrace (w_1,w_2): |w_1|^2 +
\frac{\delta_j^2}{\delta_j^{1/m}} \left| w_2 + 1 \right|^2 <
\frac{\big (d( a^j, \partial D_1) \big)^2
\exp(2R/{C_*})}{\delta_j^{1/m}} \right \rbrace.
\end{equation}
If $ w= (w_1,w_2) $ belongs to the set described above, then
\begin{eqnarray}
%\left\{
%\begin{array}{lll}
|w_1|  & \leq &  \frac{ d( a^j, \partial D_1
)\exp(R/{C_*})}{\delta_j^{1/2m}} =  \frac{
\delta_j \exp(R/{C_*})}{\delta_j^{1/2m}} \qquad \mbox{and} \label{O1} \\
\left|  w_2 + 1 \right| & \leq  & \frac{ d( a^j, \partial D_1)
\exp(R/{C_*)} } { \delta_j } = \exp(R/{C_*)}. \label{O2}
%\end{array}
%\right.
\end{eqnarray}
Moreover, for $\delta_j \in \big( 0, d( a^j, \partial D_1) \exp(
R/{C_*}) \big)$,
\[
(\phi^{a^j})^{-1} \Big \lbrace (z_1,z_2) \in \mathbf{C}^2 : |z_1|
< \tau( a^j, C \delta_j) ,|z_2| <  C \delta_j  \Big \rbrace =
\]
\begin{equation*}
\left \lbrace (w_1,w_2): |w_1 - a^j_1| < \tau( a^j, C \delta_j),
\left| w_2 - a^j_2 - \displaystyle\sum_{l=1}^{2m} d^l(a^j) ( w_1 -
a^j_1)^l \right| < C \delta_j d^0(a^j) \right \rbrace
\end{equation*}
so that
\[
\Delta^j \circ (\phi^{a^j})^{-1} \Big (\Delta(0, \tau( a^j, C
\delta_j) ) \times \Delta(0, C \delta_j ) \Big) =
\]
\begin{equation}
\left \lbrace w: |w_1| < \frac{ \tau( a^j, C \delta_j)}{
\delta_j^{1/2m}}, \left| w_2 + 1 + {\delta_j}^{-1} \Big(
\displaystyle\sum_{l=1}^{2m} d^l(a^j) \delta_j^{l/2m} w_1^l \Big)
\right| < C d^0(a^j) \right \rbrace \label{O3}
\end{equation}
If $ w = (w_1,w_2) $ belongs to the set given by (\ref{O3}), then
\begin{eqnarray}
|w_1| & < & \frac{ \tau( a^j, C \delta_j)}{ \delta_j^{1/2m}} \qquad \mbox{and} \label{O4} \\
\left| w_2 + 1 + {\delta_j}^{-1}  \Big(
\displaystyle\sum_{l=1}^{2m} d^l(a^j) \delta_j^{l/2m} w_1^l \Big)
\right| & < & C  d^0(a^j). \label{O5}
\end{eqnarray}

\noindent Among other things, it was shown in \cite{Catlin-1989}
that

\begin{itemize}

\item $ {\delta_j}^{1/2} \lesssim \tau( a^j, \delta_j) \lesssim
{\delta_j}^{1/2m}$

\medskip

\item $ | d^l(a^j)| \lesssim \delta_j ( \tau(a^j, \delta_j) )^{-l}
$ for all $ 1 \leq l \leq 2m $

\medskip

\item $ d^0(a^j) \approx 1 $.

\end{itemize}

\noindent These estimates together with (\ref{O1}), (\ref{O2}),
(\ref{O4}) and (\ref{O5}) show that if $ w = (w_1, w_2) $ belongs
to either of (\ref{O7}) or (\ref{O3}), then $ |w| $ is uniformly
bounded. In other words, the sets
\[
\Delta^j \left( B( a^j, d(a^j, \partial D_1) \exp(R/ C_*)) \right)
\bigcup \Delta^j \left( (\phi^{a^j})^{-1} \left ( \Delta(0, \tau(
a^j, C \delta_j) ) \times \Delta(0, C \delta_j ) \right) \right)
\]
are uniformly bounded. Therefore, $ \Delta^j \big(B_{U_1 \cap
D_1}( a^j, R) \big)  $ and consequently $B_{D_1^j} \big( (0,-1), R
\big)$ as a set cannot cluster at the point at infinity on $
\partial D_{1,\infty}$.

\medskip

\noindent It remains to show that the sets $B_{D_1^j} \big(
(0,-1), R \big) $ do not cluster at any finite point  of $
\partial D_{1,\infty} $. Suppose there exists a sequence of points
$ \{ z^j \}, z^j \in B_{D_1^j} \big( (0,-1), R \big)$ such that $
z^j \rightarrow z^0 $ as $ j \rightarrow \infty $ where $ z^0 $ is
a finite point on $ \partial D_{1,\infty}$. Applying Theorem 1.1
of \cite{Berteloot-2003}, we see that there exists a neighbourhood
$ U $ of $ z^0 $ in $ \mathbf{C}^2 $ such that
\begin{equation} \label{O6}
 F^K_{D_1^j}(z, v) \approx  \frac{|v_T|} { \tau \big( z, d(z, \partial D_1^j) \big) } + \frac{ |v_N|} {d(z, \partial D_1^j)}
\end{equation}
uniformly for all $j$ large, $z \in U \cap D_{1, \infty} $ and $v$
a tangent vector at $z$ -- this stable version holds since the
defining functions for $ D_1^j $ converge to that of $
D_{1,\infty} $ in the $ C^{\infty}$-topology on a given compact
set. Here the decomposition $ v = v_T + v_N $ into the tangential
and normal components is taken at $ \pi^j(z) \in
\partial D_1^j $ which is closest point on $
\partial D_1^j $ to $z$. Note that
\[
d(z, \partial D_1^j) \approx d(z, \partial D_{1, \infty} )
\]
for $ z \in U \cap D_{1, \infty} $ and that $ \pi ^j(z)
\rightarrow \pi(z) \in \partial D_{1, \infty} $ where $ |\pi(z) -
z | = d( z, \partial D_{1, \infty} ) $. Let $ \gamma^j$ be an
arbitrary piecewise $ C^1$-smooth curve in $ D_1^j$ joining $z^j$
and $ (0,-1)$, i.e., $ \gamma^j(0) = (0,-1), \gamma^j(1) = z^j $.
As we travel along $ \gamma^j$ starting from $ (0,-1)$, there is a
last point $ \alpha^j$ on the curve with $ \alpha^j \in \partial U
\cap D_1^j$. Let $ \gamma^j(t_j) = \alpha^j$ and call $ \sigma^j$
the subcurve of $ \gamma^j$ with end-points $ z^j$ and $ \alpha^j
$. Then $ \sigma^j $ is contained in an $ \epsilon $-neighbourhood
of $ \partial D_1^j$ for some fixed uniform $ \epsilon > 0 $ and
for all $j$ large. Arguing as in the proof of Proposition \ref{S}
and using (\ref{O6}) we get:
\begin{eqnarray*}
\int_0^1 F^K_{D_1^j} \big( \gamma^j(t), \dot{\gamma}^j(t) \big) dt
& \geq & \int_{t_j}^1 F^K_{D_1^j} \big( \sigma^j(t),
\dot{\sigma}^j(t)
\big) dt \\
& \gtrsim &  \int_{t_j}^1 \frac{ | \dot{\sigma}^j_T(t) |} { \tau
\big( \sigma^j(t), d(\sigma^j(t),\partial D_1^j) \big) } dt +
\int_{t_j}^1 \frac{ | \dot{\sigma}^j_N(t) |} { d(\sigma^j(t),\partial D_1^j)  } dt \\
& \geq & \int_{t_j}^1 \frac{ | \dot{\sigma}^j_N(t) |} {
d(\sigma^j(t),\partial D_1^j)  } dt.
\end{eqnarray*}
As before the last integrand turns out to be at least
\[
\frac{d}{dt} \log \big( d( \sigma^j(t), \partial D_1^j)
\big)^{1/2}
\]
and consequently
\[
\int_0^1 F^K_{D_1^j} \big( \gamma^j(t), \dot{\gamma}^j(t) \big) dt
\gtrsim -(1/2) \log d(z^j, \partial D_1^j) + C
\]
for some uniform $ C> 0 $. Taking the infimum over all such $
\gamma^j$ it follows that
\[
d_{D_1^j} \big( z^j, (0,-1) \big) \gtrsim -(1/2) \log d(z^j,
\partial D_1^j) + C.
\]
This is however a contradiction since the left side is at most $R$
while the right side becomes unbounded. This completes the proof
of the lemma.
\end{proof}

%\medskip

\begin{prop} \label{K3}
\begin{equation*}
\displaystyle\lim_{j \rightarrow \infty} d_{D_1^j} \big( (0,-1),
\cdot \big)= d_{D_{1,\infty}} \big( (0,-1), \cdot \big).
\end{equation*}
Moreover, the convergence is uniform on compact sets of $
D_{1,\infty}$.
\end{prop}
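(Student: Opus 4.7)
The plan is to bootstrap from the infinitesimal convergence (Lemma \ref{K1}) and the confinement of Kobayashi balls (Lemma \ref{O}) to the integrated distance, by separately establishing matching upper and lower bounds.

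For the upper bound, fix $z \in D_{1,\infty}$ and an arbitrary piecewise $C^1$ path $\gamma : [0,1] \to D_{1,\infty}$ with $\gamma(0) = (0,-1)$ and $\gamma(1) = z$. The trace of $\gamma$ together with $\dot\gamma$ forms a compact subset $K_{\gamma}$ of $D_{1,\infty} \times \mathbf{C}^2$. Since $D_1^j \to D_{1,\infty}$ in the Hausdorff sense, the trace of $\gamma$ is contained in $D_1^j$ for all $j$ large, so $\gamma$ is an admissible curve in $D_1^j$. By Lemma \ref{K1} the integrands $F^K_{D_1^j}(\gamma(t),\dot\gamma(t))$ converge uniformly in $t$ to $F^K_{D_{1,\infty}}(\gamma(t),\dot\gamma(t))$, giving
\[
\limsup_{j \to \infty} d_{D_1^j}\bigl((0,-1), z\bigr) \leq \limsup_{j \to \infty} L^K_{D_1^j}(\gamma) = L^K_{D_{1,\infty}}(\gamma).
\]
Taking the infimum over $\gamma$ yields $\limsup_{j} d_{D_1^j}((0,-1), z) \leq d_{D_{1,\infty}}((0,-1), z)$.

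For the lower bound, fix $R > d_{D_{1,\infty}}((0,-1),z) + 1$. By Lemma \ref{O}, the set $K := \overline{\bigcup_{j \geq j_0} B_{D_1^j}((0,-1), R)}$ is a compact subset of $D_{1,\infty}$ for some $j_0$. By Lemma \ref{K1} applied on $K \times \{|v|=1\}$ combined with the homogeneity of $F^K$ in $v$, there exist numbers $\epsilon_j \to 0^+$ such that
\[
F^K_{D_1^j}(\zeta, v) \geq (1 - \epsilon_j)\, F^K_{D_{1,\infty}}(\zeta, v) \qquad \text{for all } \zeta \in K,\ v \in \mathbf{C}^2.
\]
For $j$ large, $d_{D_1^j}((0,-1), z) < R - 1$, so we may pick piecewise $C^1$ curves $\gamma^j$ in $D_1^j$ joining $(0,-1)$ to $z$ with $L^K_{D_1^j}(\gamma^j) \leq d_{D_1^j}((0,-1),z) + 1/j < R$. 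Every point of $\gamma^j$ then lies in $B_{D_1^j}((0,-1), R) \subset K$, so $\gamma^j$ is also an admissible curve in $D_{1,\infty}$ and
\[
d_{D_1^j}\bigl((0,-1),z\bigr) + 1/j \geq L^K_{D_1^j}(\gamma^j) \geq (1 - \epsilon_j)\, L^K_{D_{1,\infty}}(\gamma^j) \geq (1 - \epsilon_j)\, d_{D_{1,\infty}}\bigl((0,-1), z\bigr).
\]
Letting $j \to \infty$ gives $\liminf_j d_{D_1^j}((0,-1),z) \geq d_{D_{1,\infty}}((0,-1),z)$, completing the pointwise convergence.

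Finally, for uniformity on compact subsets $S \subset D_{1,\infty}$, Lemma \ref{K1} provides a uniform upper bound $M$ for $F^K_{D_1^j}$ on a fixed compact neighbourhood of $S$ (and likewise for $F^K_{D_{1,\infty}}$). Integrating along Euclidean segments inside this neighbourhood shows that both $d_{D_1^j}((0,-1), \cdot)$ and $d_{D_{1,\infty}}((0,-1), \cdot)$ are Lipschitz on $S$ with a common Lipschitz constant. The equicontinuous family $\{d_{D_1^j}((0,-1),\cdot)\}$ converges pointwise on $S$ to the continuous limit, and a standard Arzel\`{a}--Ascoli argument upgrades this to uniform convergence on $S$. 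The main technical point is the lower bound, where the confinement of near-geodesics to a compact subset of $D_{1,\infty}$ provided by Lemma \ref{O} is essential; without it the infinitesimal convergence alone cannot be promoted to a lower bound on distances.
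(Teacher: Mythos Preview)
Your argument is correct, but the lower bound takes a genuinely different route from the paper's. For the upper bound both you and the paper push a near-optimal $D_{1,\infty}$-curve into $D_1^j$ and invoke Lemma~\ref{K1}; the paper additionally perturbs the endpoint because it argues by contradiction with a moving sequence $z^j\to z^0$, but this is cosmetic. For the lower bound, the paper does \emph{not} compare infinitesimal metrics along near-geodesics. Instead it invokes Lemma~\ref{R} (the $\tanh$ localisation inequality of Kim--Ma/Kim--Krantz): using Lemma~\ref{O} to ensure $B_{D_1^j}\big((0,-1),R'\big)\subset D_{1,\infty}$, one gets
\[
d_{D_{1,\infty}}\big((0,-1),z^j\big)\;\le\; d_{B_{D_1^j}((0,-1),R')}\big((0,-1),z^j\big)\;\le\;\frac{d_{D_1^j}\big((0,-1),z^j\big)}{\tanh\big(R'/2-\tilde R\big)},
\]
and lets $R'\to\infty$. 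Your approach---trapping near-optimal $D_1^j$-curves inside the compact set $K$ via Lemma~\ref{O} and then comparing the \emph{infinitesimal} metrics on $K$ via Lemma~\ref{K1}---is more elementary and self-contained, avoiding the auxiliary Lemma~\ref{R} entirely; the paper's approach, by contrast, never needs to locate or control near-geodesics in $D_1^j$. One small point: you need the closure of $\bigcup_{j\ge j_0}B_{D_1^j}\big((0,-1),R\big)$ to be compact in $D_{1,\infty}$, which is slightly stronger than the bare statement of Lemma~\ref{O}; this uniformity in $j$ is, however, exactly what the proof of Lemma~\ref{O} establishes (uniform boundedness plus no clustering at $\partial D_{1,\infty}$ of any sequence drawn from the balls), so the appeal is legitimate. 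Your upgrade to uniform convergence via equicontinuity and Arzel\`a--Ascoli is also different from the paper, which builds uniformity in from the outset by arguing by contradiction on a compact set; both are fine.
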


\begin{proof} Let $ K $ be a compact subdomain of $D_{1,\infty}$ and suppose
that the desired convergence does not occur. Then there exists a $
\epsilon_0 > 0 $ and a sequence of points $ \{z^j \} \subset K $
which is relatively compact in $ D_1^j $ for all $j$ large such
that
\[
\big| d_{D_1 ^j} \big( (0,-1), z^j ) - d_{D_{1,\infty}} \big(
(0,-1), z^j ) \big|
> \epsilon _0.
\]
By passing to a subsequence, we may assume that $ z^j \rightarrow
z^0 \in K$ as $ j \rightarrow \infty $. Then using the continuity
of $ d_{D_{1,\infty}}(z^0, \cdot) $ we have
\[
\big| d_{D_1 ^j}  \big( (0,-1), z^j ) - d_{D_{1,\infty}} \big(
(0,-1), z^0 ) \big|
> \epsilon _0/2
\]
for all $j$ large. Fix $ \epsilon > 0$ and let $ \gamma : [0,1]
\rightarrow D_{1,\infty} $ be a path such that $ \gamma(0) =
(0,-1), \gamma(1) = z^0$ and
\[
\int_0 ^1 { F^K _{D_{1,\infty}} \big( \gamma(t), \dot{\gamma}(t)
\big)} dt < d_{D_{1,\infty}} \big( (0,-1), z^0 \big) + \epsilon/2.
\]
Define $ \gamma ^j : [0,1] \rightarrow \mathbf{C}^2 $ by
\[
\gamma^j(t) = \gamma(t) + ( z^j - z^0 ) t.
\]
Since the image $ \gamma([0,1])$ is compactly contained in
$D_{1,\infty}$ and $ z^j \rightarrow z^0 \in K $ as $ j
\rightarrow \infty$, it follows that $ \gamma ^j : [0,1]
\rightarrow D_1^j $ for $j$ large. In addition, $ \gamma^j (0) =
\gamma (0) = (0,-1) $ and $ \gamma^j (1) = \gamma(1) + z^j - z^0 =
z^j $. By Lemma \ref{K1}, we see that $ F^K_{D_1^j} ( \cdot,
\cdot) \rightarrow F^K_{D_{1,\infty}} (\cdot, \cdot) $ uniformly
on compact sets of $ D_{1,\infty} \times \mathbf{C}^2$. Also, note
that $ \gamma^j \rightarrow \gamma $ and $ \dot{\gamma}^j
\rightarrow \dot{\gamma} $ uniformly on $ [0,1]$. Therefore for
$j$ large, we obtain
\[
\int_0 ^1 { F^K_{D_1^j} \big( \gamma^j(t), \dot{\gamma}^j(t)
\big)} dt \leq \int_0 ^1 { F^K _{D_{1,\infty}} \big( \gamma(t),
\dot{\gamma}(t) \big)} dt + \epsilon/2 < d_{D_{1,\infty}} \big(
(0,-1), z^0 \big) + \epsilon.
\]
By definition of $ d_{D_1 ^j} \big( (0,-1), z^j \big)$ it follows
that
\[
d_{D_1 ^j} \big( (0,-1), z^j \big) \leq \int_0 ^1 { F^K_{D_1^j}
\big( \gamma^j(t), \dot{\gamma}^j(t) \big)} dt \leq
d_{D_{1,\infty}} \big( (0,-1), z^0 \big) + \epsilon.
\]
Thus
\begin{eqnarray}
\displaystyle\limsup_{j \rightarrow \infty} d_{D_1 ^j} \big(
(0,-1), z^j \big) \leq d_{D_{1,\infty}} \big( (0,-1), z^0 \big).
\label{N1}
\end{eqnarray}
To establish lower semi-continuity, we intend to use Lemma
\ref{R}. First note that the upper semi-continuity of the
integrated Kobayashi distance yields
\[
B_{ D_{1,\infty} } \big( (0,-1), R - \epsilon \big) \subset
B_{D_1^j} \big( (0,-1), R \big)
\]
for all $ R > 0$ and for all $j$ large. The Kobayashi completeness
of $ D_{1,\infty} $ implies that
\[
D_{1,\infty} = \displaystyle \bigcup _{\nu=1}^{\infty}
B_{D_{1,\infty}} \big((0,-1),\nu \big),
\]
i.e., $ D_{1,\infty} $ can be exhausted by an increasing union of
relatively compact subdomains $B_{D_{1,\infty}} \big((0,-1), \nu
\big) $. As a result, there exist uniform positive constants $
\nu^0 $ and $ \tilde{R} $ depending only on $K$ such that
\[
K \subset  B_{D_{1,\infty}} \big((0,-1), \nu^0 \big) \subset
B_{D_1^j} \big( (0,-1), \tilde{R} \big)
\]
for all $j$ large. By Lemma \ref{O}
\begin{equation*}
d_{D_{1,\infty}} \big( (0,-1), z^j \big)  \leq d_{B_{D_1^j} \big(
(0,-1), R' \big)} \big( (0,-1), z^j \big)
\end{equation*}
where  $ R' > 0 $ is chosen such that $ R' \gg 2 \tilde{R} $. Now,
apply Lemma \ref{R} to the domain $ D_1^j$. Let the Kobayashi
metric ball $ B_{D_1^j} \big((0,-1), R' \big)$ play the role of
the subdomain $ D'$. Then
\begin{equation*}
d_{B_{D_1^j} \big((0,-1), R' \big)} \big( (0,-1), z^j \big) \leq
\frac{d_{D_1^j} \big( (0,-1), z^j \big)} { \tanh \big( R'/2 -
d_{D_1^j} \big( (0,-1), z^j \big) \big)}.
\end{equation*}
Since $ z^j \in B_{D_1^j} \big( (0,-1), \tilde{R} ) $ for all $j$
large and the function $ x \rightarrow \tanh x$ is increasing on $
[0,\infty)$, it follows that
\begin{equation*}
d_{D_{1, \infty}} \big( (0,-1), z^j \big) \leq \frac{d_{D_1^j}
\big( (0,-1), z^j \big))} { \tanh \left( R'/2 - \tilde{R} \right)
}.
\end{equation*}
Letting $ R' \rightarrow \infty$ yields
\begin{equation*}
d_{D_{1,\infty}}\big( (0,-1), z^j \big) \leq \frac {d_{D_1^j}
\big( (0,-1), z^j \big)} { 1 - \epsilon }
\end{equation*}
for all $j$ large. Again exploiting the continuity of $
d_{D_{1,\infty}}(\cdot, \cdot)$ and (\ref{N1}), we see that
\begin{equation}
d_{D_{1, \infty}} \big( (0,-1), z^0 \big) \leq d_{D_1^j} \big(
(0,-1), z^j \big) + C \epsilon \label{N2}
\end{equation}
for all $j$ large. Combining the estimates (\ref{N1}) and
(\ref{N2}), we get
\[
\displaystyle\lim_{j \rightarrow \infty} d_{D_1 ^j} \big( (0,-1),
z^j \big) = d_{D_{1,\infty}}\big( (0,-1), z^0 \big).
\]
This is a contradiction and hence the result follows.
\end{proof}

\noindent \textbf{Scaling the domain $D_2$ with respect to $ \{b^j
\} $:}

\medskip

\noindent The following lemma in \cite{Pinchuk-1980} will be
useful in our situation.

\begin{lem} \label{F1} Let $ D$ be a strongly pseudoconvex domain,
$\rho$ a defining function for $ \partial D $ and $ p \in
\partial D $. Then there exists a neighbourhood $U$ of $p$
and a family of biholomorphic mappings $ h_{\zeta} : \mathbf{C}^n
\rightarrow \mathbf{C}^n$ depending continuously on $ \zeta \in U
\cap \partial D $ that satisfy the following:

\begin{enumerate}

\item [(i)]$ h_{\zeta}(\zeta) = 0 $.

\item [(ii)] The defining function $ \rho_{\zeta} = \rho \circ
h_{\zeta}^{-1} $ of the domain $ D ^{\zeta} := h_{\zeta} ( D) $
has the form
\[ \rho_{\zeta}(z) = 2 \Re \big( z_n + K_{\zeta}(z)\big) + H_{\zeta}(z) +
\alpha_{\zeta}(z) \] where $ K_{\zeta}(z) =
\displaystyle\sum_{i,j=1} ^n a_{ij} (\zeta) z_i z_j, H_{\zeta}(z)
= \displaystyle\sum_{i,j=1} ^n b_{ij} (\zeta) z_i \bar{z_j}$ and $
\alpha_{\zeta}(z)= o (|z|^2) $ with $ K_{\zeta}('z,0) \equiv 0 $
and $ H_{\zeta}('z,0) \equiv |'z|^2 $.

\item [(iii)] The mapping $ h_{\zeta}$ takes the real normal to $
\partial D $ at $ \zeta$ to the real normal $ \{ 'z =
y_n = 0 \} $ to $ \partial D ^{\zeta}$ at the origin.

\end{enumerate}

\end{lem}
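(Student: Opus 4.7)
\medskip

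\noindent \textit{Proof sketch proposal.} The approach is Pinchuk's classical normalization of a strongly pseudoconvex boundary point, carried out uniformly in the base point $\zeta$. I would assemble $h_\zeta$ as a composition $h_\zeta = \Psi_\zeta \circ L_\zeta \circ T_\zeta$, where $T_\zeta(z) = z - \zeta$ is a translation, $L_\zeta$ is an invertible complex-linear map of $\mathbf{C}^n$, and $\Psi_\zeta$ is a polynomial automorphism of $\mathbf{C}^n$ that is the identity in the first $n-1$ variables and quadratic in $z_n$.

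\medskip

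\noindent After the translation the Taylor expansion of $\rho$ at the origin has the form $2\Re \langle z, \nabla_z \rho(\zeta) \rangle + 2\Re Q(z) + H(z) + o(|z|^2)$, with $Q$ holomorphic quadratic and $H$ Hermitian. I would choose $L_\zeta$ so that its $n$-th row is $\nabla_z \rho(\zeta)$, which forces the linear part of $\rho \circ L_\zeta^{-1}$ to be exactly $2\Re z_n$, while the remaining $n-1$ rows lie in the complex tangent space $\{ \langle z, \nabla_z \rho(\zeta) \rangle = 0 \}$ and are chosen so that the restriction of $H$ to this tangent space becomes $|'z|^2$ in the new coordinates. This last step is available precisely because strong pseudoconvexity makes the restricted Levi form positive definite, and the unique positive definite square root of its matrix representation supplies the required diagonalization and rescaling. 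At this stage $\rho \circ L_\zeta^{-1} \circ T_\zeta^{-1}$ reads $2\Re z_n + 2\Re K(z) + H_\zeta(z) + o(|z|^2)$ with $H_\zeta('z, 0) \equiv |'z|^2$. To force $K_\zeta('z, 0) \equiv 0$ I would finally take
\[
\Psi_\zeta(z_1, \ldots, z_n) \;=\; \Bigl( z_1, \ldots, z_{n-1},\; z_n + \sum_{i,j<n} a_{ij}(\zeta)\, z_i z_j \Bigr),
\]
where the $a_{ij}(\zeta)$ are the tangential holomorphic quadratic coefficients of $K$; a short Taylor calculation shows that the contribution of this substitution to the linear term $2\Re z_n$ precisely cancels the tangential part of $2\Re K$, while perturbing the Levi form only at order three or higher. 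Item (iii) then follows because after the translation and linear stages the real normal at the origin is the positive $x_n$-axis, and $\Psi_\zeta$ fixes this axis pointwise.

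\medskip

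\noindent The main issue to verify is continuous dependence on $\zeta$. The $n$-th row of $L_\zeta$ is continuous in $\zeta$ by smoothness of $\rho$; a continuous family of Hermitian-orthogonal complements to $\nabla_z \rho(\zeta)$ can be chosen by Gram--Schmidt on a small patch of $\partial D$; the positive definite square root of a continuously varying family of positive definite Hermitian matrices depends continuously on its entries; and the coefficients $a_{ij}(\zeta)$ are second derivatives of $\rho$ read off after the linear stage, hence also continuous. Shrinking the neighbourhood $U$ of $p$ if necessary ensures that each step is well defined and nondegenerate throughout $U \cap \partial D$, yielding the desired continuous family $\zeta \mapsto h_\zeta$ satisfying (i)--(iii).
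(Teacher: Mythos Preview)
The paper does not supply its own proof of this lemma; it simply quotes the statement from Pinchuk \cite{Pinchuk-1980} and uses it as a black box. Your sketch is precisely the classical Pinchuk normalization --- translate, apply a complex-linear change sending the complex gradient direction to $e_n$ and diagonalizing the Levi form on the complex tangent hyperplane, then absorb the tangential holomorphic quadratic terms into $z_n$ via a polynomial shear --- so it is exactly the argument the citation points to, and it is correct.

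One small point worth tightening: your description of $L_\zeta$ should ensure simultaneously that the \emph{real} outward normal (a real multiple of $\overline{\partial_z\rho(\zeta)}$) is sent to the positive $x_n$-axis and that the linear part of the new defining function is $2\Re z_n$; phrasing the first $n-1$ rows as ``lying in the complex tangent space'' does not by itself guarantee the first condition. In practice one first applies a unitary rotation carrying $\overline{\partial_z\rho(\zeta)}/|\partial_z\rho(\zeta)|$ to $e_n$ (which achieves both requirements at once), and only then diagonalizes the Levi form on $\{z_n=0\}$ by a unitary change of $'z$ followed by real scalings. With that adjustment your continuity argument (Gram--Schmidt, positive square roots, smooth dependence of Taylor coefficients) goes through unchanged, and the phrase ``quadratic in $z_n$'' in your description of $\Psi_\zeta$ should read ``quadratic in $'z$ in the $z_n$-component'', matching the formula you actually wrote.
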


\noindent Here, $ z \in \mathbf{C}^n$ is written as $ z = ( 'z,
z_n) \in \mathbf{C}^{n-1} \times \mathbf{C} $.

\medskip

\noindent To apply this lemma, choose points $ \zeta^j \in
\partial D_2$, closest to $ b^j $. For $j$ large, the choice of $
\zeta^j $ is unique since $ \partial D_2 $ is sufficiently smooth.
Moreover, $ \zeta^j \rightarrow q^0 $ and $ b^j \rightarrow q^0$
as $ j \rightarrow \infty $. Let $ h^j := h_{\zeta^j} $ be the
biholomorphisms provided by the lemma above. We observe that for
$j$ large, $ h^j(b^j) = ( 0, - \epsilon_j ) $. Let $ T^j :
\mathbf{C}^n \rightarrow \mathbf{C}^n $ be the anisotropic
dilation map given by
\[
T^j ( z_1, z_2 ) = \left( \frac{z_1}{ {\epsilon_j}^{1/2} } ,
\frac{z_2} { \epsilon_j} \right)
\]
and let $ D_2^j = T^j \circ h^j ( D_2 ) $.  Note that $ T^j \circ
h^j ( b^j ) = ( 0, -1) $ and the sequence of domains $ \{ D_2^j \}
$ converges in the Hausdorff metric to the unbounded realization
of the unit ball, namely to
\[
 D_{2, \infty} = \Big \{ z = (z_1,z_2) \in \mathbf{C}^2 : 2 \Re z_2 + | z_1 |^2 < 0
\Big \}.
\]
\noindent It is natural to investigate the behaviour of $
d_{D_2^j} ( z, \cdot )$ as $ j \rightarrow \infty$. To do this, we
use ideas from \cite{Seshadri&Verma-2006}.

\begin{prop} \label{D1} Let $ x^0 \in D_{2,\infty}$. Then $ d_{D_2^j} ( x^0,
\cdot ) \rightarrow d_{D_{2,\infty}} (x^0, \cdot ) $ uniformly on
compact sets of $ D_{2,\infty}$.
\end{prop}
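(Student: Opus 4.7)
\textit{Proof proposal.} The plan is to follow the same three-step scheme used in the proof of Proposition \ref{K3}, now adapted to the strongly pseudoconvex setting where the target model is the unbounded realization of the unit ball. First, one establishes the infinitesimal convergence $F^K_{D_2^j}(s,v) \to F^K_{D_{2,\infty}}(s,v)$ uniformly on compact subsets of $D_{2,\infty} \times \mathbf{C}^2$. The upper semicontinuity $\limsup F^K_{D_2^j}(s^j,v^j) \leq F^K_{D_{2,\infty}}(s,v)$ is obtained by taking the extremal disc for $F^K_{D_{2,\infty}}(s,v)$ (which exists because the ball is taut), shrinking it slightly, and perturbing it so that it maps $\Delta$ into $D_2^j$, exactly as in Lemma \ref{K1}. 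The matching lower bound requires extracting a normal limit of near-extremal discs $h^j:\Delta \to D_2^j$; this is where the strong pseudoconvexity of $\partial D_2$ at $q^0$ enters, for it provides a local holomorphic peak function, and a normal family argument obtained by pulling back Proposition~1 of \cite{Berteloot&Coeure-1991} through $T^j \circ h^j$ gives the required uniform boundedness of $\{h^j\}$ on compacts of $\Delta$.

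Second, and this is expected to be the main obstacle, one must show that for each $R>0$ the Kobayashi ball $B_{D_2^j}(x^0,R)$ is relatively compact in $D_{2,\infty}$ for all $j$ large. It suffices, by the triangle inequality and the boundedness of $d_{D_2^j}((0,-1),x^0)$ (which follows once convergence from $(0,-1)$ is known), to prove this with base point $(0,-1)$. The strategy then parallels Lemma \ref{O}: by the localization lemma (Lemma \ref{M}) applied at the local holomorphic peak point $q^0$, one reduces to showing that $(T^j \circ h^j)(B_{U_2 \cap D_2}(b^j, R))$ is uniformly bounded in $\mathbf{C}^2$ and that the rescaled Kobayashi balls cannot cluster at any finite boundary point of $D_{2,\infty}$. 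Graham's explicit upper bounds for $F^K_{D_2}$ near the strongly pseudoconvex point $q^0$ transform under the anisotropic scaling $T^j \circ h^j$ into uniformly bounded sets in $\mathbf{C}^2$; this is precisely the stability phenomenon exploited in \cite{Seshadri&Verma-2006}. The second part relies on the matching lower estimate $F^K_{D_2^j}(z,v) \gtrsim |v_N|/d(z,\partial D_2^j) + |v_T|/\sqrt{d(z,\partial D_2^j)}$, which holds uniformly in $j$ in a neighbourhood of any finite boundary point of $D_{2,\infty}$ because the defining functions of $D_2^j$ converge in $C^2$ to that of $D_{2,\infty}$ on compact sets. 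Integrating this estimate along any approaching path forces $d_{D_2^j}$ to blow up, giving the desired containment.

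With these two ingredients in hand, the convergence $d_{D_2^j}(x^0,z^j) \to d_{D_{2,\infty}}(x^0,z^0)$ for $z^j \to z^0$ in a compact $K \subset D_{2,\infty}$ follows verbatim along the lines of Proposition \ref{K3}. For the upper bound, fix a path $\gamma:[0,1] \to D_{2,\infty}$ from $x^0$ to $z^0$ that almost realizes $d_{D_{2,\infty}}(x^0,z^0)$, perturb it to $\gamma^j(t) = \gamma(t) + (z^j - z^0)t$ in $D_2^j$, and integrate using the uniform infinitesimal convergence from step one to conclude $\limsup_j d_{D_2^j}(x^0,z^j) \leq d_{D_{2,\infty}}(x^0,z^0)$. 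For the lower bound, apply Lemma \ref{R} with the ambient domain $D = D_2^j$ and the subdomain $D' = B_{D_2^j}(x^0,R')$, where $R' \gg 2\tilde{R}$ and $\tilde{R}$ is chosen so that $K \subset B_{D_{2,\infty}}(x^0,\tilde{R})$; step two guarantees $B_{D_2^j}(x^0,R') \subset D_{2,\infty}$, hence
\[
d_{D_{2,\infty}}(x^0,z^j) \leq d_{B_{D_2^j}(x^0,R')}(x^0,z^j) \leq \frac{d_{D_2^j}(x^0,z^j)}{\tanh(R'/2 - \tilde{R})}.
\]
Letting $R' \to \infty$ and invoking continuity of $d_{D_{2,\infty}}(x^0,\cdot)$ produces the matching lower bound and completes the proof.
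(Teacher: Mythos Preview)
Your overall scheme is correct, but the paper's proof takes a \emph{different route for the lower bound}. You propose to mirror Proposition~\ref{K3}: first establish an analog of Lemma~\ref{O} (that $B_{D_2^j}((0,-1),R)$ is relatively compact in $D_{2,\infty}$ for large $j$), and then feed this into Lemma~\ref{R} to get $d_{D_{2,\infty}}(x^0,z^j) \leq d_{D_2^j}(x^0,z^j)/\tanh(R'/2 - \tilde R)$. The paper instead bypasses the analog of Lemma~\ref{O} entirely and exploits the strong pseudoconvexity directly: it chooses $U_2$ small enough that $U_2 \cap D_2$ is strictly convex, invokes Lempert's theory to produce complex geodesics $\phi^j : \Delta(0,m_j) \to U_2 \cap D_2$ joining $(T^j\circ h^j)^{-1}(x^0)$ to $(T^j\circ h^j)^{-1}(z^j)$, uses Venturini's localization to compare $d_{U_2 \cap D_2}$ with $d_{D_2}$, shows $m_j \geq 1+\delta$ uniformly, proves normality of $\sigma^j = T^j \circ h^j \circ \phi^j$ by trapping the images inside a fixed paraboloid $\Omega_0 \simeq \mathbf{B}^2$, and finally integrates $F^K_{D_{2,\infty}}$ along the limit $\sigma|_{[0,1]}$ to obtain the liminf inequality.

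Both arguments work. Your approach has the virtue of being uniform with the finite-type case and of not invoking Lempert's machinery; its cost is the extra step of bounding the rescaled Kobayashi balls, which (a minor slip) requires \emph{lower} bounds on $F^K_{U_2 \cap D_2}$ near $q^0$, not the upper bounds you cite from Graham. The paper's approach buys a shorter path to the liminf by using the geodesics that strong convexity hands you for free, and it reserves the Lemma~\ref{R}-style comparison for the later Step~III (equicontinuity of the scaled isometries), where $B_{D_2^j}((0,-1),R) \subset D_{2,\infty}$ follows a posteriori from Proposition~\ref{B2} once Proposition~\ref{D1} is in hand. Also note that the paper simply cites \cite{Seshadri&Verma-2006} for the infinitesimal convergence rather than reproving it, and for the normality argument it uses the elementary paraboloid trick rather than Berteloot--Coeur\'e.
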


\begin{proof} Let $ K \subset D_{2,\infty} $ be compact and suppose that the
desired convergence does not occur. Then there exists a $ \epsilon
_0 > 0 $ and a sequence of points $ \{z^j \} \subset K $ which is
relatively compact in $D_2^j $ for $j$ large such that
\[
\big| d_{D_2^j} ( x^0, z^j ) - d_{D_{2,\infty}} (x^0, z^j ) \big|
> \epsilon _0
\]
for all $j$ large. By passing to a subsequence, assume that  $ z^j
\rightarrow z^0 \in K $ as $ j \rightarrow \infty$. Since $
d_{D_{2,\infty}} ( x^0, \cdot ) $ is continuous, it follows that
\begin{equation}
\big| d_{D_2 ^j} ( x^0, z^j ) - d_{D_{2,\infty}} (x^0, z^0 ) \big|
> \epsilon _0/2 \label{Q3}
\end{equation}
for all $j$ large. The upper semicontinuity of the distance
function follows exactly as in Proposition \ref{K3}. Fix $
\epsilon
> 0$ and let $ \gamma : [0,1] \rightarrow D_{2,\infty} $ be a path
such that $ \gamma(0) = x^0, \gamma(1) = z^0$ and
\[
\int_0 ^1 { F^K _{D_{2,\infty}} \big( \gamma(t), \dot{\gamma}(t)
\big)} dt < d_{D_{2,\infty}} (x^0, z^0 ) + \epsilon/2.
\]
Define $ \gamma ^j : [0,1] \rightarrow \mathbf{C}^2 $ by
\[
\gamma^j(t) = \gamma(t) + ( z^j - z^0 ) t.
\]
Since the image $ \gamma([0,1])$ is compactly contained in
$D_{2,\infty}$ and $ z^j \rightarrow z^0 \in K $ as $ j
\rightarrow \infty$, it follows that $ \gamma ^j : [0,1]
\rightarrow D_2^j $ for $j$ large. In addition, $ \gamma^j (0) =
\gamma (0) = x^0 $ and $ \gamma^j (1) = \gamma(1) + z^j - z^0 =
z^j $. It is already known that $ F^K_{D_2^j} ( \cdot, \cdot)
\rightarrow F^K_{D_{2,\infty}} (\cdot, \cdot) $ uniformly on
compact sets of $ D_{2,\infty} \times \mathbf{C}^2$ (see
\cite{Seshadri&Verma-2006}). Also, note that $ \gamma^j
\rightarrow \gamma $ and $ \dot{\gamma}^j \rightarrow \dot{\gamma}
$ uniformly on $ [0,1]$. Therefore for $j$ large, we obtain
\[
\int_0 ^1 { F^K_{D_2^j} \big( \gamma^j(t), \dot{\gamma}^j(t)
\big)} dt \leq \int_0 ^1 { F^K _{D_{2,\infty}} \big( \gamma(t),
\dot{\gamma}(t) \big)} dt + \epsilon/2 < d_{D_{2,\infty}} (x^0,
z^0 ) + \epsilon.
\]
By definition of $ d_{D_2 ^j} ( x^0, z^j )$ it follows that
\[
d_{D_2 ^j} ( x^0, z^j ) \leq \int_0 ^1 { F^K_{D_2^j} \big(
\gamma^j(t), \dot{\gamma}^j(t) \big)} dt \leq d_{D_{2,\infty}}
(x^0, z^0 ) + \epsilon.
\]
Thus
\begin{eqnarray}
\displaystyle\limsup_{j \rightarrow \infty} d_{D_2 ^j} ( x^0, z^j
) \leq d_{D_{2,\infty}} (x^0, z^0 ). \label{2.1a}
\end{eqnarray}

\noindent Conversely, since $ K \cup \{ x^0 \}$ is a compact
subset of $ D_{2,\infty} $, it follows that $ K \cup \{ x^0 \}$ is
compactly contained  $D_2^j$ for all $j$ large. Fix $ \epsilon >
0$ and let $ V \subset U_2 $ be sufficiently small neighbourhoods
of $ q^0 \in \partial D_2 $ with $ V$ compactly contained in $U_2$
so that
\begin{eqnarray}
F^K_{D_2}( z, v ) \leq F^K_{U_2 \cap D_2} (z,v) \leq ( 1 +
\epsilon) F^K_{D_2}(z, v) \label{2.2a}
\end{eqnarray}

\noindent for $ z \in V \cap D_2 $ and $v$ a tangent vector at $
z$. If $j$ is sufficiently large, $ { (T^j \circ h^j ) }^{-1}
(x^0) $ and $ { (T^j \circ h^j ) }^{-1} (z^j) $ belong to $ V \cap
D_2 $. If $U_2$ is small enough, $U_2 \cap D_2 $ is strictly
convex and it follows from Lempert's work \cite{Lempert-1981} that
there exist $ m_j > 1$ and holomorphic mappings
\[
\phi^j : \Delta(0, m_j) \rightarrow U_2 \cap D_2
\]
such that $ \phi^j(0) = { (T^j \circ h^j ) }^{-1} (x^0),\ \phi^j
(1) = { (T^j \circ h^j ) }^{-1} (z^j) $ and
\begin{eqnarray}
d_{U_2 \cap D_2 } \left( { (T^j \circ h^j ) }^{-1}(x^0), { (T^j
\circ h^j ) }^{-1} (z^j) \right)  & = &
d_{\Delta(0, m_j)} (0,1) \nonumber  \\
& =  & \int_0 ^1 { F^K _{ U_2 \cap D_2 } \big( \phi^j(t),
\dot{\phi}^j (t) \big) } dt . \label{2.2b}
\end{eqnarray}

\noindent By Proposition 3 of \cite{Venturini-1989}, it follows
that
\[
d_{U_2 \cap D_2 } \left( { (T^j \circ h^j ) }^{-1}(x^0), { (T^j
\circ h^j ) }^{-1} (z^j) \right)  \leq (1 + \epsilon) d_{ D_2 }
\left( { (T^j \circ h^j ) }^{-1}(x^0), { (T^j \circ h^j ) }^{-1}
(z^j) \right)
\]
for all $j$ large. Since $ T^j \circ h^j $ are biholomorphisms and
hence Kobayashi isometries,
\begin{eqnarray}
d_{T^j \circ h^j (U_2 \cap D_2)} (x^0, z^j) \leq (1 + \epsilon)
d_{D_2^j}( x^0, z^j). \no
\end{eqnarray}
Now (\ref{2.2b}) shows that
\begin{eqnarray*}
\frac{1}{2} \log \left( \frac{m_j + 1} { m_j - 1} \right) =
d_{\Delta(0, m_j)} (0,1) & =  &
d_{U_2 \cap D_2 } \left( { (T^j \circ h^j ) }^{-1}(x^0), { (T^j \circ h^j ) }^{-1} (z^j)\right) \\
&  = & d_{T^j \circ h^j (U_2 \cap D_2)} (x^0, z^j) \\
& \leq & (1 + \epsilon) d_{D_2^j}( x^0, z^j).
\end{eqnarray*}
However from (\ref{2.1a}) we have that
\[
d_{D_2^j}( x^0, z^j) \leq d_{D_{2,\infty}} (x^0, z^0) + \epsilon <
\infty
\]
and hence $ m_j > 1 + \delta$ for some uniform $ \delta > 0 $ for
all $ j $ large. Thus the holomorphic mappings $ \sigma^j = T^j
\circ h^j \circ \phi^j : \Delta (0, 1 + \delta) \rightarrow T^j
\circ h^j (U_2 \cap D_2) \subset D_2^j $ are well-defined and
satisfy $ \sigma^j(0) = x^0 $ and $ \sigma^j(1) = z^j $.

\medskip

\noindent We claim that $ \{ \sigma^j \} $ admits a subsequence
that converges uniformly on compact sets of $ \Delta (0, 1 +
\delta) $ to a holomorphic mapping $ \sigma: \Delta (0, 1 +
\delta) \rightarrow D_{2,\infty} $. Indeed consider the disc $
\Delta (0, r) $ of radius $ r \in (0,1 + \delta) $. Observe that $
(T^j \circ h^j ) ^{-1} \circ \sigma^j(0) = \phi^j (0)= (T^j \circ
h^j )^{-1}(x^0) \rightarrow q^0 \in \partial D_2 $ as $ j
\rightarrow \infty$. Let $ W $ be a sufficiently small
neighbourhood of $ q^0$. Since $ q^0 \in \partial D_2 $ is a local
peak point, it follows that $(T^j \circ h^j )^{-1} \circ \sigma^j
(\Delta (0,r)) \subset W \cap D_2 $ for all $j$ large. If $W$ is
small enough, there exists $ R > 1$ such that for all $j$ large
\[
h^j ( W \cap D_2) \subset \Big \{ z \in \mathbf{C}^2 : |z_1|^2 + |
z_2 + R|^2 < R^2 \Big \} \subset \Omega_0
\]
where
\[
\Omega_0 = \Big \{ z \in \mathbf{C}^2 : 2 R (\Re z_2) + |z_1 |^2 <
0 \Big \}.
\]
Note that $ \Omega_0$ is invariant under $T^j$ and $ \Omega_0 $ is
biholomorphically equivalent to $ \mathbf{B}^2 $. Hence $
\sigma^j( \Delta (0, r)) \subset T^j \circ h^j ( W \cap D_2)
\subset \Omega_0$ for all $j$ large. If $ \sigma^j(z) = (
\sigma^j_1(z), \sigma^j_2(z)) $ for each $j$, this exactly means
that
\[
2 R \big( \Re (\sigma^j_2 (z)) \big) + | {\sigma}^j_1(z)|^2 < 0
\]
whenever $ z \in \Delta(0,r)$. It follows that $ \{ \sigma^j_2(z)
\} $ and hence $ \{{\sigma}^j_1(z)\}$ forms a normal family on $
\Delta(0, r) $. Since $ r \in (0,1 + \delta) $ was arbitrary, the
usual diagonal subsequence yields a holomorphic mapping $ \sigma:
\Delta (0, 1 + \delta) \rightarrow \mathbf{C}^2 $ or $ \sigma
\equiv \infty$ on $ \Delta (0, 1 + \delta) $. The latter is not
possible since $ \sigma(0) = x^0 $.

\medskip

\noindent It remains to show that $ \sigma: \Delta (0, 1 + \delta)
\rightarrow D_{2,\infty} $. Following \cite{Pinchuk-1980}, note
that $ D_2^j$ are defined by
\begin{eqnarray*}
\rho^j (z) = 2 \ \Re z_2 + | z_1|^2 + A^j (z)
\end{eqnarray*}
where
\[
|A^j(z)| \leq |z|^2 \big( c\sqrt {\epsilon_j} + \eta( \epsilon_j
|z|^2)\big)
\]
and $ \eta(t) $ is a function of one real variable such that $
\eta(t) = o(1) $ as $ t \rightarrow 0 $. Thus for $ z \in
\Delta(0, r)$ and $ r \in (0,1 + \delta) $,
\begin{equation}
2  \Re \big(\sigma^j_2 (z) \big) + |{\sigma}^j_1(z)|^2 + A^j(
\sigma^j(z)) < 0 \label{2.2c}
\end{equation}
where
\[
|A^j(\sigma^j(z))| < | \sigma^j(z) |^2 \big(c \sqrt {\epsilon_j} +
\eta( \epsilon_j |\sigma^j(z)|^2)\big).
\]
Letting $ j \rightarrow \infty $ in (\ref{2.2c}) yields
\[
2 \Re \big(\sigma_2 (z) \big) + |{\sigma}_1(z)|^2 \leq 0
\]
for $ z \in \Delta (0, r) $ or equivalently that $ \sigma
(\Delta(0, r) ) \subset \overline{D}_{2,\infty} $. Since $ r \in
(0,1 + \delta) $ was arbitrary, it follows that $ \sigma (
\Delta(0, 1 + \delta)) \subset \overline{D}_{2,\infty} $. Since $
\sigma(0) = x^0$, the maximum principle shows that $ \sigma (
\Delta(0, 1 + \delta)) \subset D_{2,\infty} $. Using (\ref{2.2a})
and (\ref{2.2b}), we get
\begin{eqnarray*}
\int_0 ^1 { F^K_{D_2^j} \big( \sigma^j(t),  \dot{\sigma}^j(t) \big
)} dt & \leq &  \int_0 ^1 { F^K _{D_2} \big( \phi^j(t),
\dot{\phi}^j(t) \big)}
dt  \\
& \leq & \int_0 ^1 { F^K _{U_2 \cap D_2} \big( \phi^j(t),
\dot{\phi}^j(t) \big)} dt \\
& = & d_{T^j \circ h^j (U_2 \cap D_2)} (x^0, z^j) \\
& \leq & (1 + \epsilon) d_{D_2^j}( x^0, z^j)
\end{eqnarray*}
Since $ \sigma^j \rightarrow \sigma $ and $ \dot{\sigma}^j
\rightarrow \dot{\sigma} $ uniformly on $ [0,1]$, again exploiting
the uniform convergence of $ F^K_{D_2^j}( \cdot, \cdot)
\rightarrow F^K_{D_{2,\infty}} (\cdot, \cdot)$ on compact sets of
$ D_{2,\infty} \times \mathbf{C}^2$, we see that
\begin{eqnarray*}
\int_0 ^1 {F^K_{D_{2,\infty}} \big(\sigma(t), \dot{\sigma}(t)\big)
dt} \leq \int_0 ^1{F^K_{D_2^j} \big(\sigma^j(t), \dot{\sigma}^j(t)
\big) dt } + \epsilon \leq d_{D_2^j}( x^0, z^j) + C \epsilon
\end{eqnarray*}
for all $j$ large. Finally, observe that $ \sigma |_{[0,1]}$ is a
differentiable path in $D_{2,\infty}$ joining $ x^0$ and $ z^0 $.
Hence by definition
\begin{eqnarray}
d_{D_{2,\infty}} (x^0, z^0 ) \leq \int_0 ^1 {F^K_{D_{2,\infty}}
\big(\sigma(t), \dot{\sigma}(t) \big) dt} \leq d_{D_2^j}( x^0,
z^j) + C \epsilon \label{2.2d}
\end{eqnarray}
Combining (\ref{2.1a}) and (\ref{2.2d}) shows that
\[
\displaystyle\lim_{j \rightarrow \infty} d_{D_2 ^j} ( x^0, z^j ) =
d_{D_{2,\infty}} (x^0, z^0 )
\]
which contradicts the assumption (\ref{Q3}) and proves the
required result.
\end{proof}

\begin{prop} \label{B2} Fix $ x^0 \in D_{2,\infty}$ and $ R > 0$. Then
\[
B_{D_2^j} (x^0, R) \rightarrow B_{D_{2,\infty}}(x^0, R)
\]
in the Hausdorff sense. Moreover, for any $ \epsilon > 0 $ and for
all $j$ large

\begin{enumerate}

\item [(i)] $ B_{D_{2,\infty}}(x^0, R) \subset B_{D_2^j} (x^0, R +
\epsilon),$

\medskip

\item [(ii)] $ B_{D_2^j} (x^0, R - \epsilon)  \subset
B_{D_{2,\infty}}(x^0, R).$

\end{enumerate}

\end{prop}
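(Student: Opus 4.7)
The plan is to establish the two inclusions (i) and (ii); the Hausdorff convergence of $B_{D_2^j}(x^0,R)$ to $B_{D_{2,\infty}}(x^0,R)$ will then follow by letting $\epsilon\to 0$.

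For (i), note that $D_{2,\infty}$ is biholomorphic to $\mathbf{B}^2$ and hence Kobayashi complete, so $K:=\overline{B_{D_{2,\infty}}(x^0,R)}$ is a compact subset of $D_{2,\infty}$. The Hausdorff convergence $D_2^j\to D_{2,\infty}$ ensures $K\subset D_2^j$ for all $j$ large, and Proposition \ref{D1} then yields
\[
\sup_{z\in K}\big|\, d_{D_2^j}(x^0,z)-d_{D_{2,\infty}}(x^0,z)\,\big|<\epsilon
\]
for $j$ sufficiently large. Thus $d_{D_{2,\infty}}(x^0,z)<R$ forces $d_{D_2^j}(x^0,z)<R+\epsilon$, which proves (i).

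For (ii), I would argue by contradiction: suppose there exist $\epsilon>0$, a subsequence, and points $z^j\in B_{D_2^j}(x^0,R-\epsilon)$ with $d_{D_{2,\infty}}(x^0,z^j)\geq R$ (in particular, $z^j$ a priori may escape $D_{2,\infty}$). The key step is to show that $\{z^j\}$ is relatively compact in $D_{2,\infty}$. Once this is done, extract a subsequence with $z^j\to z^0\in D_{2,\infty}$ and apply Proposition \ref{D1} on a compact neighbourhood of $\{z^j\}\cup\{z^0\}$ to obtain $d_{D_{2,\infty}}(x^0,z^0)=\lim_j d_{D_2^j}(x^0,z^j)\leq R-\epsilon$, which contradicts $d_{D_{2,\infty}}(x^0,z^j)\geq R$ together with the continuity of $d_{D_{2,\infty}}(x^0,\cdot)$.

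The relative compactness claim is the main obstacle and requires ruling out escape of $\{z^j\}$ to the boundary of $D_{2,\infty}$, finite or at infinity. I expect to handle this by an argument parallel to Lemma \ref{O}: the defining functions of $D_2^j$ converge in $C^{\infty}$ on compacts to that of the strongly pseudoconvex domain $D_{2,\infty}$, so a stable version of Theorem 1.1 of \cite{Berteloot-2003} --- which in the strongly pseudoconvex setting reduces to the classical Graham-type estimate --- yields a uniform lower bound
\[
d_{D_2^j}(x^0,z)\geq -(1/2)\log d(z,\partial D_2^j)-C
\]
for $z$ near $\partial D_{2,\infty}$ and for all $j$ large. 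Forcing $z^j\to\partial D_{2,\infty}$ would blow up the left-hand side, contradicting $d_{D_2^j}(x^0,z^j)<R-\epsilon$; escape to infinity is excluded similarly using that $D_{2,\infty}$ is a Siegel model of $\mathbf{B}^2$ and that the scalings $T^j\circ h^j$ are biholomorphisms. With (i) and (ii) in hand, Hausdorff convergence follows at once: every point of $B_{D_{2,\infty}}(x^0,R)$ lies in $B_{D_2^j}(x^0,R+\epsilon)$ by (i), while applying (ii) with $R$ replaced by $R+\epsilon$ shows that every point of $B_{D_2^j}(x^0,R)$ lies in $B_{D_{2,\infty}}(x^0,R+\epsilon)$.
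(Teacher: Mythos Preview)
Your argument for (i) matches the paper's exactly: compactness of $\overline{B_{D_{2,\infty}}(x^0,R)}$ from Kobayashi completeness, plus uniform convergence from Proposition \ref{D1}.

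For (ii) your approach is correct but heavier than necessary. You propose to show that the offending points $z^j\in B_{D_2^j}(x^0,R-\epsilon)\setminus B_{D_{2,\infty}}(x^0,R)$ stay in a compact subset of $D_{2,\infty}$ by importing the machinery of Lemma \ref{O} (stable Berteloot/Graham-type lower bounds on $d_{D_2^j}$ near $\partial D_{2,\infty}$ and a separate argument at infinity). This would work, but the paper sidesteps the whole compactness issue with a one-line connectedness trick: since $B_{D_2^l}(x^0,R-\epsilon_0)$ is path-connected, contains $x^0$, and by assumption meets the complement of $B_{D_{2,\infty}}(x^0,R)$, it must already contain a point $a^l$ of the \emph{sphere} $\partial B_{D_{2,\infty}}(x^0,R)$. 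That sphere is compact in $D_{2,\infty}$ (hence in $D_2^l$ for $l$ large), so one can pass to a limit $a^l\to a$ and invoke Proposition \ref{D1} directly to get $d_{D_{2,\infty}}(x^0,a)\le R-\epsilon_0$, contradicting $d_{D_{2,\infty}}(x^0,a)=R$. No boundary estimates are needed.

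A minor point: in your formulation of the contradiction hypothesis you write $d_{D_{2,\infty}}(x^0,z^j)\ge R$, but $z^j$ might lie in $D_2^j\setminus D_{2,\infty}$, where this distance is undefined; the correct negation is simply $z^j\notin B_{D_{2,\infty}}(x^0,R)$. The paper's sphere trick handles both cases uniformly. Finally, the paper proves the Hausdorff convergence first and then (i), (ii), whereas you derive the convergence from (i), (ii); either order is fine.
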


\begin{proof} Let $ K \subset B_{D_{2,\infty}}(x^0, R)$ be compact. Then $ K
$ is a relatively compact subset of $ D_2^j$ for all $j$ large and
there exists a positive constant $ c=c(K) \in (0, R) $ such that $
d_{D_{2,\infty}} (x^0,z) < c$ for all $ z \in K $. Pick $
\tilde{c} \in (c, R) $. It follows from Proposition \ref{D1} that
\[
d_{D_2^j} (x^0, z) \leq d_{D_{2,\infty}} (x^0,z) + \tilde{c} - c
\]
for all $z$ in $K$ and for all $j$ large. Therefore
\[
d_{D_2^j }(x^0, z) \leq \tilde{c} < R
\]
for all $z \in K$ and for all $j$ large. This is just the
assertion that $ K $ is compactly contained in $ B_{D_2^j} (x^0,
R) $ for all $j$ large. Conversely, let $ K \subset \mathbf{C}^2$
be a compact set such that $K $ is compactly contained in $
B_{D_2^j} (x^0, R)$ for all $ j$ large. Then $ K $ is relatively
compact subset of $D_{2,\infty} $. Additionally, there exists a
positive constant $ c= c(K) \in (0, R) $ such that $ d_{D_2^j}
(x^0, z) \leq c $ for all $ z \in K $ and for all $ j$ large. Pick
$ \tilde{c} \in (c, R) $. Again applying Proposition \ref{D1}, we
see that
\[
d_{D_{2,\infty}}(x^0, z) < d_{D_2^j}(x^0,z) + \tilde{c} - c \qquad
\]
for all $ z \in K $ and all $j$ large. Thus for all $ z \in K$, we
obtain
\[
d_{D_{2,\infty}} (x^0, z) < \tilde{c} < R
\]
or equivalently $ K $ is compactly contained in $
B_{D_{2,\infty}}(x^0, R)$. This shows that the sequence of domains
$ \big \{ B_{D_2^j}(x^0, R) \big \}$ converges in the Hausdorff
metric to $ B_{D_{2,\infty}}(x^0, R)$.

\medskip

\noindent To verify (i), first observe that the closure of $
B_{D_{2,\infty}}(x^0, R) $ is compact since $ D_{2,\infty}$ is
Kobayashi complete. Then using Proposition \ref{D1}, we get that
\[
d_{D_2^j} (x^0, z) \leq d_{D_{2,\infty}} (x^0, z) + \epsilon
\]
for all $ z $ in the closure of $ B_{D_{2,\infty}}(x^0, R) $ and
for all $ j $ large. Said differently,
\[
B_{D_{2,\infty}}(x^0, R) \subset B_{D_2^j} (x^0, R + \epsilon)
\]
for all $j$ large.

\medskip

\noindent For (ii) suppose that the desired result is not true.
Then there exists a $ \epsilon_0 > 0$ and a sequence of points $
\{ a^l \} \subset \partial B_{D_{2,\infty}}(x^0, R)$ such that $
a^l \in B_{D_2^l} (x^0, R - \epsilon_0) $. In view of compactness
of $\partial B_{D_{2,\infty}}(x^0, R)$, we may assume that $ a^l
\rightarrow a \in \partial B_{D_{2,\infty}}(x^0, R)$ as $ l
\rightarrow \infty$. It follows from Proposition \ref{D1} that
\[
d_{D_2^l} (a^l, x^0) \rightarrow d_{D_{2,\infty}} (a, x^0)
\]
as $ l \rightarrow \infty$. Consequently, $ d_{D_{2,\infty}} (a,
x^0) \leq R - \epsilon_0 $. This violates the fact that $
d_{D_{2,\infty}} (a, x^0)= R$ thereby proving (ii).
\end{proof}

%%%%%%%%%%%%%%%%%%%%%%%%%%%%%%%%%%%%%%%%%%%%%%%%%%%%%%%%%%%%%%%%%%%%%%%%%%%%%%%%%%%%%%%%%

\noindent Now, consider the composition
\[
f^j = T^j \circ h^j \circ f \circ (\Delta^j)^{-1} : D_1^j
\rightarrow D_2^j
\]
Then $ f^j( 0,-1) = (0,-1) $ for all $ j$. Note that $ f^j $ is
also an isometry for the Kobayashi distances on $ D_1^j $ and $
D_2^j $.

\medskip

\noindent \textbf{Step III:} Let $ \{ K_{\nu} \} $ be an
increasing sequence of relatively compact subsets of $ D_{1,
\infty} $ that exhausts $ D_{1, \infty} $. Fix a pair $ K_{\nu_0}
$ compactly contained in $ K_{\nu_0 + 1}$ such that $ (0,-1) \in
K_{\nu_0} $ and write $ K_1 = K_{\nu_0}$ and $ K_2 = K_{\nu_0 +
1}$ for brevity. Let $ \omega(K_1)$ be a neighbourhood of $ K_1$
such that $ \omega(K_1) \subset K_2$. Since $ \{ D^j_1 \} $
converges to $ D_{1, \infty}$, it follows that $ K_1 \subset
\omega(K_1) \subset K_2 $ which in turn is relatively compact in $
D^j_1 $ for all $j$ large. We show that the sequence $ \{ f^j \}$
is equicontinuous at each point of $ \omega(K_1)$.

\medskip

\noindent Since each $ f^j $ is an Kobayashi isometry, we have
that
\[
d_{D^j_2 } \big( f^j(x), f^j(y) \big) = d_{D^j_1}(x,y)
\]
for all $ j $ and $ x, y $ in $ K_2 $. In particular,
\[
d_{D^j_2 } \big( f^j(x), (0,-1) \big) = d_{D^j_1} \big(x,(0,-1)
\big)
\]
for all $ x $ in $ K_2 $. Now, since
\[
d_{D_1^j} \big( \cdot, (0,-1) \big) \rightarrow d_{D_{1,\infty}}
\big(\cdot, (0,-1) \big)
\]
uniformly on $ K_2 $ which is compactly contained in
$D_{1,\infty}$. Therefore, given $ \epsilon > 0 $
\begin{equation*}
d_{D^j_2 } \big( f^j(x), (0,-1) \big) = d_{D^j_1} \big(x,(0,-1)
\big) \leq d_{D_{1,\infty}} \big(x, (0,-1) \big) + \epsilon < R_0
\end{equation*}
for some $ R_0 > 0 $, for all $ x \in K_2 $ and $ j$ large. This
is just the assertion that
\[
\{ f^j(K_2) \} \subset B_{D^j_2} \big( (0,-1), R_0 \big).
\]
Further, from Proposition \ref{B2} we have
\[
B_{D^j_2} \big( (0,-1), R_0 \big) \subset B_{D_{2, \infty}} \big(
(0,-1), R_0 + \epsilon)
\]
for all $j$ large. It follows that $ \{ f^j (K_2) \} $ is
uniformly bounded.

\medskip

\noindent Now, choose $ R \gg 4 R_0 $ and let $ W $ be a
sufficiently small neighbourhood of $ q^0 \in \partial D_2 $.
Observe that
\[
\big( T^j \circ h^j \big)^{-1}  \Big[ B_{D_{2, \infty}} \big(
(0,-1), R + \epsilon ) \Big] \subset W \cap D_2
\]
for all $j$ large. If $W$ is small enough, there exists $ R' > 1$
such that for all $j$ large
\[
h^j ( W \cap D_2 ) \subset \Big \{ z = (z_1,z_2) \in \mathbf{C}^2
: |z_1|^2 + | z_2 + R'|^2 < R'^2 \Big \} \subset \Omega_0
\]
where
\[
\Omega_0 = \Big \{ z \in \mathbf{C}^2 : 2 R' (\Re z_2) + |z_1 |^2
< 0 \Big \}.
\]
Note that $ \Omega_0$ is invariant under $T^j$ and $ \Omega_0 $ is
biholomorphically equivalent to $ \mathbf{B}^2 $. Hence
\[
B_{D_{2, \infty}} \big( (0,-1), R + \epsilon \big) \subset T^j
\circ h^j (W \cap D_2) \subset \Omega_0
\]
and consequently
\begin{multline} \label{P1}
d_{\Omega_0} \big( f^j(x), f^j(y) \big) \leq d_{B_{D_{2, \infty}}
\big( (0,-1), R + \epsilon \big)} \big( f^j(x), f^j(y) \big) \leq
d_{B_{D^j_2} \big( (0,-1), R \big)} \big( f^j(x), f^j(y) \big)
\end{multline}
for all $x, y $ in $ K_2 $ and for all $j$ large. Now, apply Lemma
\ref{R} to the domain $ D_2^j$ and argue as in the proof of
Proposition \ref{K3}. Let the Kobayashi metric ball $ B_{D_2^j}
\big( (0,-1), R \big)$ play the role of the subdomain $ D'$. Then
\begin{equation*}
d_{B_{D_2^j} \big( (0,-1), R \big)} \big( f^j(x), f^j(y) \big)
\leq \frac{d_{D_2^j} \big( f^j(x), f^j(y) \big)} { \tanh \left( R
- d_{D_2^j} \big( f^j(x), f^j(y) \big) \right)}.
\end{equation*}
Since $ \{ f^j(K_2) \} \subset B_{D^j_2} \big( (0,-1), R_0 \big) $
and $x \rightarrow \tanh x $ is increasing on $ [0,\infty)$, it
follows that
\begin{equation} \label{V2}
d_{B_{D_2^j} \big( (0,-1), R \big)} \big( f^j(x), f^j(y) \big)
\leq \frac{d_{D_2^j} \big( f^j(x), f^j(y) \big)} { \tanh ( R - 2
R_0 )}.
\end{equation}
Moreover, for each $ x \in \omega(K_1)$ fixed, there exists a $r
> 0$ such that $ B( x,r) $ is compactly contained in $ \omega(K_1)$.
The distance decreasing property of the Kobayashi metric together
with its explicit form on $ B(x, r) $ gives
\begin{eqnarray} \label{V3}
d_{D^j_2 } \big( f^j(x), f^j(y) \big) = d_{D^j_1}(x,y) \leq d_{B(
x,r)} (x,y) \leq  | x - y | /c
\end{eqnarray}
for all $j$ large, $ y \in B( x, r) $ and a uniform constant $ c >
0 $.

\medskip

\noindent Combining (\ref{P1}), (\ref{V2}) and (\ref{V3}), we see
that
\[
d_{\Omega_0} \big( f^j(x), f^j(y) \big) \leq \frac{d_{D_2^j} \big(
f^j(x), f^j(y) \big)} { \tanh ( R - 2 R_0 )} \leq \frac{ | x - y
|} {c \tanh{R_0} } .
\]
Now, using the fact that $ \Omega_0 \backsimeq \mathbf{B}^2 $ and
the explicit form of the metric on $ \mathbf{B}^2 $ gives
\[
\big | f^j(x) - f^j(y) \big| \lesssim | x - y |
\]
for $ y \in B(x, r) $. This shows that $ \{ f^j \} $ is
equicontinuous at each point of $ \omega(K_1) $. The diagonal
subsequence still denoted by the same symbols then converges
uniformly on compact subsets of $ D_{1, \infty} $ to a limit
mapping $ \tilde{f} : D_{1, \infty} \rightarrow \overline{D}_{2,
\infty} $ which is continuous.

\medskip

\noindent \textbf{Step IV:} We now show that $ d_{D_{1, \infty}}
(x,y) = d_{D_{2, \infty}} \big( \tilde{f}(x), \tilde{f}(y) \big)$
for all $ x, y \in \Omega_1 $ where $ \Omega_1 = \big \{ z \in
D_{1, \infty} : \tilde{f}(z) \in D_{2, \infty} \big \}$. Note that
$ (0, -1) \in \Omega_1$ and hence $ \Omega_1$ is non-empty. It is
already known that
\begin{equation*}
d_{D^j_1} (x,y) = d_{D^j_2} \big( f^j(x),f^j(y) \big)
\end{equation*}
for all $j$. First note that $ d_{D^j_1} (x,y) \rightarrow
d_{D_{1, \infty}} (x,y) $ as $ j \rightarrow \infty $ as can be
seen from the arguments presented in Proposition \ref{K3} (cf.
Lemma 5.7 of \cite{Mahajan&Verma}). It remains to show that the
right side above converges to $ d_{D_2^j} \big( \tilde{f}(x),
\tilde{f}(y) \big)$ as $ j \rightarrow \infty $. For this observe
that
\[
\big | d_{D^j_2} \big(f^j(x),f^j(y) \big) - d_{D^j_2} \big (
\tilde{f}(x), \tilde{f}(y) \big) \big| \leq   d_{D^j_2} \big(
f^j(x), \tilde{f}(x) \big) + d_{D^j_2} \big( \tilde{f}(y),f^j(y)
\big)
\]
by the triangle inequality. Since $ f^j(x) \rightarrow
\tilde{f}(x) $ and the domains $ D^j_2 $ converge to $ D_{2,
\infty} $, it follows that there is a small ball $ B \big(
\tilde{f}(x), r \big) $ around $ \tilde{f}(x) $ which contains $
f^j(x) $ for all large $ j $ and which is contained in $ D^j_2 $
for all large $j$, where $ r > 0 $ is independent of $j$. Thus
\[
d_{D^j_2} \big( f^j(x), \tilde{f}(x) \big) \lesssim \big| f^j(x) -
\tilde{f}(x) \big |.
\]
The same argument works for showing that $ d_{D^j_2} \big(
\tilde{f}(y),f^j(y) \big) $ is small. So to verify the claim, it
is enough to prove that $ d_{D^j_2} \big ( \tilde{f}(x),
\tilde{f}(y) \big) $ converges to $ d_{D_{2, \infty}} \big(
\tilde{f}(x), \tilde{f}(y) \big) $. But this is immediate from
Proposition \ref{D1}.

\medskip

\noindent \textbf{Step V:} The limit map $ \tilde{f} $ is a
surjection onto $ D_{2,\infty} $. Firstly, we need to show that $
\tilde{f}(D_{1, \infty}) \subset D_{2, \infty}$. Indeed, $
\Omega_1 = D_{1, \infty}$. If $ s^0 \in
\partial \Omega_1 \cap D_{1, \infty} $, choose a sequence $ s^j \in \Omega_1$
that converges to $ s^0$. It follows from Step IV that
\begin{equation*}
d_{D_{1, \infty}} \big( s^j, (0, -1) \big) = d_{D_{2, \infty}}
\big( \tilde{f}(s^j), (0,-1) \big)
\end{equation*}
for all $j$. Since $ s^0 \in \partial \Omega_1$, the sequence $ \{
\tilde{f}(s^j) \} $ converges to a point on $ \partial D_{2,
\infty}$ and as $D_{2, \infty} $ is complete in the Kobayashi
distance, the right hand side above becomes unbounded. However,
the left hand side remains bounded again because of completeness
of $ D_{1, \infty} $. This contradiction shows that $ \Omega_1 =
D_{1, \infty} $ which exactly means that $ \tilde{f}(D_{1,
\infty}) \subset D_{2, \infty}.$ The above observation coupled
with Step IV forces that
\begin{equation*}
d_{D_{1, \infty}} (x,y) = d_{D_{2, \infty}} \big( \tilde{f}(x),
\tilde{f}(y) \big)
\end{equation*}
for all $ x, y \in D_{1, \infty} $. To establish the surjectivity
of $ \tilde{f}$, consider any point $ u^0 \in \partial \big(
\tilde{f}( D_{1, \infty} ) \big) \cap D_2 $ and choose a sequence
$ u^j \in \tilde{f}(D_{1, \infty})$ that converges to $ u^0 $. Let
$ \{ t^j \} $ be sequence of points in $ D_{1, \infty} $ be such
that $ \tilde{f}(t^j)= u^j$. Then for all $j$ and for all $ x \in
D_{1, \infty} $,
\begin{equation} \label{V4}
d_{D_{1, \infty}}(x, t^j) = d_{D_{2, \infty}} \big( \tilde{f}(x),
\tilde{f}(t^j) \big)
\end{equation}
There are two cases to be considered. After passing to a
subsequence, if needed,

\begin{enumerate}

\item[(i)] $  t^j \rightarrow t^0 \in \partial D_{1, \infty} $,

\item [(ii)] $ t^j \rightarrow  t^1 \in D_{1, \infty} $ as $ j
\rightarrow \infty $.

\end{enumerate}

\medskip

\noindent In case (i), observe that the right hand side remains
bounded because of the completeness of $ D_{2, \infty}$. Moreover,
since $ D_{1, \infty}$ is complete in the Kobayashi metric, the
left hand side in (\ref{V4}) becomes unbounded. This contradiction
shows that $ \tilde{f}(D_{1, \infty}) = D_{2, \infty}$.

\medskip

\noindent For (ii), firstly, the continuity of the mapping $
\tilde{f} $ implies that the sequence $ \{ \tilde{f} (t^j) \} $
converges to the point $ \tilde{f}(t^1)$. Therefore, we must have
$ \tilde{f}(t^1) = u^0$. Consider the mappings $ (f^j)^{-1} :
D^j_2 \rightarrow D^j_1$. Now, an argument similar to the one
employed in Step II yields that the sequence $ \{ (f^j)^{-1} \}$
admits a subsequence that converges uniformly on compact sets of $
D_{2, \infty} $ to a continuous mapping $ \tilde{g}: D_{2, \infty}
\rightarrow \overline{D}_{1, \infty}$. Then $ \tilde{g} \circ
\tilde{f} \equiv id_{D_{1, \infty}}$. Therefore,
\[
t^1 = \tilde{g} \circ \tilde{f}(t^1) = \tilde{g}(u^0) =
\displaystyle \lim_{j \rightarrow \infty} (f^j)^{-1} (u^0)
\]
and consequently the sequence $ \{ (f^j)^{-1} (u^0) \} $ is
compactly contained in $ D_{1, \infty}$. Now, repeating the
earlier argument for $ \{(f^j)^{-1}\}$, it follows that $
\tilde{g} : D_{2, \infty} \rightarrow D_{1, \infty} $ and $
\tilde{f} \circ \tilde{g} \equiv id_{D_{2, \infty}} $. In
particular, $ \tilde{f}$ is surjective.

\medskip

\noindent This shows that $ \tilde{f} $ is a continuous isometry
between $ D_{1, \infty} $ and $ D_{2, \infty} $ in the Kobayashi
metric. The goal now is to show that this continuous isometry is
indeed a biholomorphic mapping. To do this, we use ideas from
\cite{Seshadri&Verma-2009}.

\medskip

\noindent \textbf{Step VI:} The Kobayashi distance of $ D_{1,
\infty} $ and the Euclidean distance are Lipschitz equivalent on
any compact convex subset of $ D_{1, \infty} $. This follows from
Lemma 3.3 of \cite{Seshadri&Verma-2009}. It should be mentioned
that although Lemma 3.3 is a statement about strongly convex
domains, the same proof gives the required result in our setting.

\medskip

\noindent \textbf{Step VII:} $ \tilde{f} $ is differentiable
almost everywhere. Since the restriction of $ \tilde{f} $ to any
relatively compact convex subdomain gives a Lipschitz map with
respect to the Euclidean distance, we may apply the classical
theorem of Rademacher-Stepanov to get the required result.

\medskip

\noindent \textbf{Step VIII:} By \cite{Ma-1995}, the infinitesimal
Kobayashi metric $ F^K_{D_{1, \infty}} $ is $ C^1$-smooth on $ {
D_{1, \infty}} \times \mathbf{C}^2 \setminus \{0\} $. Further, $
F^K_{D_{1, \infty}} $ is the quadratic form associated to a
Riemannian metric, $ \tilde{f} $ is $ C^1 $ on $ D_{1, \infty} $
and finally $ \tilde{f} $ is holomorphic or anti-holomorphic.
These statements can be deduced from the arguments in
\cite{Seshadri&Verma-2009} without any additional difficulties. It
follows that
\begin{multline}
\mathbf{B}^2 \simeq D_{1, \infty} = \big \{ (z_1, z_2) \in
\mathbf{C}^2 : 2 \Re z_2 + |z_1|^{2m} < 0 \big \} \\ \simeq
\tilde{D} = \big \{ (z_1,z_2) \in \mathbf{C}^2 : |z_1|^{2m} +
|z_2|^2 < 1 \big \}.
\end{multline}
Let $ F : \mathbf{B}^2 \rightarrow \tilde{D} $ be a biholomorphism
which in addition may be assumed to preserve the origin. Since $
\mathbf{B}^2$ and $ \tilde{D}$ are both circular domains, it
follows that $G$ is linear. This forces that $ 2m =2$.

\medskip

\noindent But this exactly means that there exists a local
coordinate system in a neighbourhood of the origin can be written
as
\[
\big \{( z_1, z_2) \in \mathbf{C}^2 : 2 \Re z_2 + |z_1|^2 + o(
|z_1|^2 + \Im z_2 ) < 0 \big \}
\]
This contradicts the fact that $ p^0 = (0,0)$ is a weakly
pseudoconvex point and proves the theorem. \qed

\begin{rem} Theorem \ref{2} is to be interpreted as a version of
Bell's result (cf. \cite{Bell-1981}) on biholomorphic
inequivalence of a strongly pseudoconvex domain and a smoothly
bounded pseudoconvex domain in $ \mathbf{C}^n $. Here, the end
conclusion of non-existence of a global biholomorphism is replaced
by a global isometry. The question of recovering the theorem for
arbitrary weakly pseudoconvex finite type domains for isometries
seems interesting.
\end{rem}

\begin{thm} \label{W}
Let $ f : D_1 \rightarrow D_2 $ be a continuous Kobayashi isometry
between two bounded domains in $ \mathbf{C}^2$. Let $ p^0 $ and $
q^0 $ be points on $ \partial D_1 $ and $ \partial D_2 $
respectively. Assume that the boundaries $ \partial D_1 $ and $
\partial D_2 $ are both $ {C}^{\infty}$-smooth weakly pseudoconvex and of finite type
near $ p^0 $ and $ q^0 $ respectively. Suppose that $ q^0 $
belongs to the cluster set of $p^0 $ under $f$. Then $f$ extends
as a continuous mapping to a neighbourhood of $ p^0 $ in $
\overline{D}_1 $.
\end{thm}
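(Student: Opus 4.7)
The plan is to adapt the proof of Theorem \ref{1} with essentially one substitution: the strongly pseudoconvex lower estimate on $d_{D_2}$ (invoked there via Forstneric--Rosay) is replaced by the weakly pseudoconvex finite type lower estimate of Proposition \ref{Q}. Every other ingredient carries over unchanged.

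Assume for contradiction that $f$ fails to extend continuously near $p^0$ and obtain, as in the proof of Theorem \ref{1}, sequences $\{p^j\}, \{s^j\} \subset D_1$ with $p^j, s^j \to p^0$, $f(p^j) \to q^0$, but $f(s^j) \not\to q^0$. Construct the same polygonal paths $\gamma^j$ joining $p^j$ and $s^j$. Fix $\epsilon > 0$ small enough that $B(q^0, \epsilon) \cap \partial D_2$ lies inside the locus where $\partial D_2$ is $C^\infty$-smooth weakly pseudoconvex of finite type, pick $u^j \in B(q^0, \epsilon) \cap \mathrm{trace}(f \circ \gamma^j)$, and let $t^j \in \mathrm{trace}(\gamma^j)$ satisfy $f(t^j) = u^j$. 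By construction $t^j \to p^0$, while $u^j \to u^0 \in \partial D_2 \cap B(q^0, \epsilon)$ with $u^0 \neq q^0$; in particular both $q^0$ and $u^0$ are weakly pseudoconvex finite type boundary points of $\partial D_2$, so Proposition \ref{Q} becomes available on $D_2$ at the pair $(q^0, u^0)$.

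The three estimates driving the contradiction now run as follows. The upper bound (\ref{Y1}) on $d_{D_1}(p^j, t^j)$ is a general inequality for any $C^2$-smoothly bounded domain, obtained by integrating the Kobayashi metric along the piecewise normal-and-tangential path from $p^j$ to $t^j$, and hence transfers verbatim from Theorem \ref{1}. The replacement for (\ref{Y2}) reads
\[
d_{D_2}\big(f(p^j), f(t^j)\big) \geq -(1/2)\log d\big(f(p^j), \partial D_2\big) - (1/2)\log d\big(f(t^j), \partial D_2\big) - C_2
\]
for all $j$ large, and follows directly from Proposition \ref{Q} at the pair $(q^0, u^0)$. The assertion $d(f(p^j), \partial D_2) \leq C_3 \, d(p^j, \partial D_1)$ (and its analogue for $t^j$) is obtained as in Theorem \ref{1}: combine Proposition \ref{S} on $D_1$, which yields $d_{D_1}(p^j, a) \geq -(1/2)\log d(p^j, \partial D_1) - C_4$ for a fixed interior reference point $a$, with the general upper estimate $d_{D_2}(f(p^j), f(a)) \leq -(1/2)\log d(f(p^j), \partial D_2) + C_5$, and use the isometric identity $d_{D_1}(p^j, a) = d_{D_2}(f(p^j), f(a))$.

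Substituting the assertion into the lower bound and comparing with (\ref{Y1}) via $d_{D_1}(p^j, t^j) = d_{D_2}(f(p^j), f(t^j))$ reduces exactly to the inequality
\[
-(C_1 + C_2 + \log C_3) \leq (1/2)\log\big(d(p^j, \partial D_1) + |p^j - t^j|\big) + (1/2)\log\big(d(t^j, \partial D_1) + |p^j - t^j|\big),
\]
whose right side tends to $-\infty$ as $p^j, t^j \to p^0$, giving a contradiction. The only delicate step in the plan is the initial choice of $\epsilon$, ensuring $u^0$ lies in the finite type locus of $\partial D_2$ so that Proposition \ref{Q} applies; once this is arranged, the argument mirrors that of Theorem \ref{1}.
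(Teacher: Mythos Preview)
Your proposal is correct and follows essentially the same route as the paper's own proof: both argue by contradiction, carry over the polygonal-path construction and the Forstneric--Rosay upper bound \eqref{Y1} from Theorem \ref{1}, replace the strongly pseudoconvex lower bound \eqref{Y2} by Proposition \ref{Q}, and establish the distance-comparison assertion via Proposition \ref{S} on $D_1$ together with the standard interior-ball upper estimate on $D_2$. Your explicit remark that $\epsilon$ must be chosen small enough to force $u^0$ into the finite type locus of $\partial D_2$ (so that Proposition \ref{Q} applies at the pair $(q^0,u^0)$) is a point the paper leaves implicit.
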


\begin{proof} If $f$ does not extend continuously to any
neighbourhood $ U_2 $ of $ p^0 $ in $ \overline{D}_1 $, there
exists a sequence of points $ \{ s^j \} \subset D_1 $ converging
to $ p^0 \in \partial D_1 $ such that the sequence $ \{ f(s^j ) \}
$ does not converge to the point $ q^0 \in \partial D_2 $. By
hypothesis, there exists a sequence $ \{ p^j \} \subset D_1 $
converging to $ p^0 \in \partial D_1 $ such that the  $
\displaystyle \lim_{j \rightarrow \infty} f(p^j) = q^0 \in
\partial D_2 $. Then for polygonal paths $ \gamma ^j $ in $ D_1 $
joining $ p^j $ and $ s^j $ defined as in Theorem \ref{1} and
points $ p^{j0}, s^{j0}, t^j, u^j $ and $ u^0 $ chosen
analogously, it follows from \cite{Forstneric&Rosay-1987} that
\begin{multline} \label{W1}
d_{D_1}(p^j,t^j)  \leq  - (1/2) \log {d(p^j,\partial D_1)} +
 (1/2) \log \big( d(p^j,\partial D_1) + |p^j - t^j| \big) \\
 + (1/2) \log \big( d(t^j,\partial D_1) + |p^j - t^j| \big) - (1/2) \log {d(t^j,\partial D_1)} +
 C_1
\end{multline}
Applying Proposition \ref{Q} yields
\begin{equation} \label{W2}
d_{D_2} \big( f(p^j), f(t^j) \big) \geq - (1/2) \log d \big(
f(p^j), \partial D_2 \big) - (1/2) \log d \big( f(t^j), \partial
D_2 ) - C_2
\end{equation}
for all $j$ large and uniform positive constant $ C_2 $.  Next, we
claim that
\[ d \big( f(p^j), \partial D_2 ) \leq C_4 d( p^j, \partial D_2 ) \ \mbox{and} \ d \big( f(t^j),
\partial D_2 ) \leq C_4 d( t^j, \partial D_2 )
\]
for some uniform positive constant $ C_4 $. Assume this for now.
Now, using the fact $ d_{D_1} ( p^j, t^j) = d_{D_2} \big( f(p^j),
f(t^j) \big) $ and comparing the inequalities (\ref{W1}) and
(\ref{W2}), it follows from the above claim that for all $j$ large
\[
- ( C_1 + C_2 + \log C_4 ) \leq (1/2) \log \big( d(p^j,\partial
D_1) + |p^j - t^j| \big) + (1/2) \log \big( d(t^j,\partial D_1) +
|p^j - t^j| \big)
\]
which is impossible.

\medskip

\noindent To prove the claim, fix $ a \in D_1 $. By Proposition
\ref{S}, we have that
\begin{equation} \label{W4}
d_{D_1} (a, p^j) \geq -(1/2) \log {d(p^j, \partial D_1 ) } - C_5
\end{equation}
and
\begin{equation} \label{W5}
d_{D_2} \big( f(p^j), f(a) \big) \leq -(1/2) \log d \big(
d(f(p^j), \partial D_2 \big) + C_6.
\end{equation}
for all $j$ large and uniform positive constants $ C_5 $ and $ C_6
$. Fix $ a \in D_1 $, using $ d_{D_1} (a, p^j) = d_{D_2} \big(
f(p^j), f(a) \big) $, and comparing the inequalities (\ref{W4})
and (\ref{W5}), we get the required estimates. Hence the claim.
\end{proof}

\begin{thm} \label{L}
Let $ f : D_1 \rightarrow D_2 $ be a continuous Kobayashi isometry
between two bounded domains in $ \mathbf{C}^n$. Let $ p^0 $ and $
q^0 $ be points on $ \partial D_1 $ and $ \partial D_2 $
respectively. Assume that the boundaries $ \partial D_1 $ and $
\partial D_2 $ are both $ {C}^{2}$-smooth strongly pseudoconvex
near $ p^0 $ and $ q^0 $ respectively. Suppose that $ q^0 $
belongs to the cluster set of $p^0 $ under $f$. Then $f$ extends
as a continuous mapping to a neighbourhood of $ p^0 $ in $
\overline{D}_1 $.
\end{thm}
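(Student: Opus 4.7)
The plan is to follow the blueprint of Theorem \ref{1} and Theorem \ref{W} with the weakly pseudoconvex finite type estimates replaced by the classical Forstneric--Rosay estimates at strongly pseudoconvex boundary points (which hold in $\mathbf{C}^n$). As before, argue by contradiction: assume $f$ does not extend continuously to any neighbourhood of $p^0$ in $\overline{D}_1$, so there exists $\{s^j\} \subset D_1$ with $s^j \to p^0$ while $\{f(s^j)\}$ does not converge to $q^0$. By hypothesis, there is a companion sequence $\{p^j\} \subset D_1$ with $p^j \to p^0$ and $f(p^j) \to q^0$.

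Next, I would form polygonal paths $\gamma^j$ joining $p^j$ and $s^j$ using short inward-normal segments at $p^{j0}, s^{j0} \in \partial D_1$ and a straight segment between their endpoints, exactly as in the proof of Theorem \ref{1}. Since $f \circ \gamma^j$ is a continuous path in $D_2$ from $f(p^j)$ to $f(s^j)$, picking a small $\epsilon > 0$ and intercepting $f \circ \gamma^j$ as it first leaves $B(q^0,\epsilon)$ produces points $u^j = f(t^j)$ with $t^j \in \mathrm{trace}(\gamma^j)$. Then $t^j \to p^0$ by construction, while after passing to a subsequence $u^j \to u^0 \in \partial D_2 \cap U_2$ with $u^0 \neq q^0$.

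The heart of the argument is the same squeeze as in Theorem \ref{1}. On the $D_1$ side, the upper estimate of Forstneric--Rosay at the $C^2$ strongly pseudoconvex point $p^0$ gives
\begin{multline*}
d_{D_1}(p^j, t^j) \leq -\tfrac{1}{2}\log d(p^j, \partial D_1) - \tfrac{1}{2}\log d(t^j, \partial D_1) \\
+ \tfrac{1}{2}\log\bigl(d(p^j,\partial D_1) + |p^j - t^j|\bigr) + \tfrac{1}{2}\log\bigl(d(t^j, \partial D_1) + |p^j - t^j|\bigr) + C_1.
\end{multline*}
On the $D_2$ side, since $f(p^j) \to q^0$ and $f(t^j) \to u^0$ are approaches to two distinct strongly pseudoconvex boundary points, the Forstneric--Rosay two-point lower estimate (the strongly pseudoconvex analog of Proposition \ref{Q}, which is classical and holds in $\mathbf{C}^n$ via the Graham estimates $F^K_{D_2}(z,v) \gtrsim |v_T|/d(z,\partial D_2)^{1/2} + |v_N|/d(z,\partial D_2)$) yields
\[
d_{D_2}\bigl(f(p^j), f(t^j)\bigr) \geq -\tfrac{1}{2}\log d(f(p^j), \partial D_2) - \tfrac{1}{2}\log d(f(t^j),\partial D_2) - C_2.
\]
The auxiliary comparison $d(f(p^j), \partial D_2) \leq C_3\, d(p^j, \partial D_1)$ (and likewise for $t^j$) is obtained exactly as in Theorem \ref{1}: fix $a \in D_1$, use $d_{D_1}(a, p^j) \geq -\tfrac{1}{2}\log d(p^j, \partial D_1) - C$ (one-sided analog of Proposition \ref{S} at a strongly pseudoconvex point, valid in $\mathbf{C}^n$), pair it with the upper bound $d_{D_2}(f(a), f(p^j)) \leq -\tfrac{1}{2}\log d(f(p^j), \partial D_2) + C$, and invoke the isometry property $d_{D_1}(a,p^j) = d_{D_2}(f(a), f(p^j))$.

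Plugging these comparisons into the lower estimate on the $D_2$ side and equating with the upper estimate on the $D_1$ side via $d_{D_1}(p^j, t^j) = d_{D_2}(f(p^j), f(t^j))$ forces
\[
-(C_1 + C_2 + \log C_3) \leq \tfrac{1}{2}\log\bigl(d(p^j, \partial D_1) + |p^j - t^j|\bigr) + \tfrac{1}{2}\log\bigl(d(t^j, \partial D_1) + |p^j - t^j|\bigr),
\]
whose right-hand side tends to $-\infty$ as $p^j, t^j \to p^0$, the desired contradiction. The only substantive difference from Theorem \ref{1} lies in the boundary geometry, and the main obstacle is simply verifying that the relevant one-point and two-point logarithmic estimates for the Kobayashi metric remain valid at $C^2$-smooth strongly pseudoconvex points in $\mathbf{C}^n$ under purely local hypotheses on $\partial D_1$ and $\partial D_2$; this follows from the standard localization of the Kobayashi metric at a local holomorphic peak point and the Graham-type infinitesimal estimates, so the argument carries over without further complication.
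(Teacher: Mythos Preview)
Your proposal is correct and follows exactly the approach the paper intends: the paper itself omits the proof of Theorem~\ref{L}, stating only that it ``is along the same lines as that of Theorem~\ref{1} and Theorem~\ref{W},'' and your write-up carries out precisely that program, replacing the finite-type estimates of Propositions~\ref{S} and~\ref{Q} by their classical strongly pseudoconvex analogues from \cite{Forstneric&Rosay-1987} (which are valid in $\mathbf{C}^n$ under purely local $C^2$ hypotheses). The polygonal-path construction, the choice of the intermediate points $t^j$, the auxiliary boundary-distance comparison via a fixed interior point, and the final squeeze all mirror Theorem~\ref{1} verbatim.
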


\noindent The proof of the above theorem is along the same lines
as that of Theorem \ref{1} and Theorem \ref{W} and is hence
omitted.

\medskip

\noindent It turns out that versions of the above mentioned
results hold for the inner Carath\'{e}odory distance. More
concretely, the following global statements can be proved:

\begin{thm} \label{U}
Let $ f : D_1 \rightarrow D_2 $ be a continuous isometry between
two bounded domains in $ \mathbf{C}^n $ with respect to the inner
Carath\'{e}odory distances on these domains.

\begin{enumerate}

\item [(i)] Assume that $ D_1 $ and $ D_2 $ are both $
{C}^{3}$-smooth strongly pseudoconvex domains in $ \mathbf{C}^n $,
then $f$ extends continuously up to the boundary.

\item[(ii)] Assume that $ D_1 \subset \mathbf{C}^2 $ is a $ C^3
$-smooth strongly pseudoconvex domain and $ D_2 \subset
\mathbf{C}^2 $ is a $ C^{\infty}$-smooth weakly pseudoconvex
finite type domain, then $f$ extends continuously up to the
boundary.

\item [(iii)] Assume that $ D_1 $ and $ D_2 $ are both $
C^{\infty}$-smooth weakly pseudoconvex finite type domains in $
\mathbf{C}^2 $, then $f$ extends continuously up to the boundary.

\end{enumerate}

\end{thm}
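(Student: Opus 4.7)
\medskip

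\noindent \textit{Proof plan for Theorem \ref{U}:} The plan is to reduce each of the three cases to the proofs already carried out for the Kobayashi distance in Theorem \ref{1}, Theorem \ref{W} and Theorem \ref{L}. Those proofs rest on exactly two ingredients: a Forstneric--Rosay type upper bound for $d_{D_1}(p^j,t^j)$ along polygonal paths near the boundary, and a lower bound of the form $d_{D_2}\big(f(p^j),f(t^j)\big) \geq -(1/2)\log d(f(p^j),\partial D_2) - (1/2)\log d(f(t^j),\partial D_2) - C$, together with the comparison $d(f(p^j),\partial D_2) \lesssim d(p^j,\partial D_1)$ derived from Proposition \ref{S}. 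For Theorem \ref{U} it is therefore enough to establish the inner Carath\'eodory analogues of both ingredients; the polygonal-path construction, the extraction of the auxiliary sequence $\{t^j\}$ with $f(t^j) \to u^0 \neq q^0$, and the final contradiction then carry over verbatim with $d_{D_i}$ replaced by $c^i_{D_i}$ throughout.

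The upper bound is immediate: since $c^i_D \leq d_D$ on every domain, the Forstneric--Rosay inequality used in (\ref{Y1}) and (\ref{W1}) directly yields the same inequality for $c^i_{D_1}(p^j,t^j)$. The lower bound is handled case by case. In cases (ii) and (iii) one uses Herbort's Theorem \ref{X0}, which furnishes two-sided estimates for $c^i_D$ in terms of the same quantity $\rho^*(a,b) + \rho^*(b,a)$ that controls $d_D$; consequently Proposition \ref{S} and Proposition \ref{Q} go through line for line with $d_D$ replaced by $c^i_D$, giving $c^i_D(a,b) \geq -(1/2)\log d(b,\partial D) - C$ and its two-sided variant near weakly pseudoconvex finite type points. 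In case (i) the corresponding estimate near a $C^3$-smooth strongly pseudoconvex point can be extracted from known boundary behaviour of the infinitesimal Carath\'eodory metric $F^C_D$, which near such a point satisfies $F^C_D \gtrsim F^K_D$; integrating along any curve exiting a neighbourhood of $q^0$ produces the desired logarithmic lower bound for $c^i_D$.

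With both ingredients in hand, the argument proceeds exactly as before. In each case we assume $f$ fails to extend continuously to a neighbourhood of $p^0$, extract sequences $\{s^j\},\{p^j\}\subset D_1$ converging to $p^0$ with $f(p^j)\to q^0$ and $f(s^j)\not\to q^0$, form the polygonal paths $\gamma^j$ joining $p^j$ to $s^j$ as in Theorem \ref{1}, pick intermediate points $u^j \in B(q^0,\epsilon)$ on $\mathrm{trace}(f\circ\gamma^j)$, and set $t^j = f^{-1}(u^j)$, so that $t^j \to p^0$ but $f(t^j) \to u^0 \neq q^0$. Combining the isometry identity $c^i_{D_1}(p^j,t^j) = c^i_{D_2}\big(f(p^j),f(t^j)\big)$ with the upper bound on the left side and the lower bound on the right, and with the boundary-distance comparison obtained as in the proof of Theorem \ref{1}, yields a contradiction.

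The main obstacle is the lower bound in case (i): although the Kobayashi version near a strongly pseudoconvex boundary point is standard, the inner Carath\'eodory variant requires a careful use of $C^3$-smoothness to produce the peak-function machinery that gives $F^C_D \gtrsim F^K_D$ near $q^0$, and this is why the statement in (i) assumes $C^3$ rather than merely $C^2$ regularity. Once this estimate is in place the remainder of the proof is a direct transcription of the arguments of Theorem \ref{1}, Theorem \ref{W} and Theorem \ref{L}.
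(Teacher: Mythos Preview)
Your plan is correct and follows essentially the same route as the paper: reduce to the Kobayashi-distance proofs of Theorems \ref{1}, \ref{W} and \ref{L}, using $c^i_D \leq d_D$ for the upper bounds and Herbort's Theorem \ref{X0} for the lower bounds in the finite-type cases. The one place where your treatment differs slightly is the lower bound for $c^i_D$ near strongly pseudoconvex points: the paper invokes the integrated Balogh--Bonk estimate (Theorem \ref{P}, itself built on Ma's infinitesimal bounds for $F^C_D$) and simply cites \cite{Seshadri&Verma-2006} for case (i), whereas you propose to obtain the bound by integrating the infinitesimal comparison $F^C_D \gtrsim F^K_D$ directly as in Proposition \ref{S}. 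These are equivalent in substance, and your identification of the $C^3$ hypothesis as the source of the needed Carath\'eodory-metric estimates is correct (though it is Ma's boundary asymptotics for $F^C_D$ rather than peak-function machinery per se that requires the extra derivative).
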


\noindent To establish the above theorem, the following result due
to Balogh-Bonk (\cite{Balogh&Bonk-2000}) will be needed. This in
turn relies on estimates for the infinitesimal Carath\'{e}odory
metric given by D. Ma (\cite{Ma-1991}).

\begin{thm} \label{P}
Let $ D \subset \mathbf{C}^n $ be a bounded strongly pseudoconvex
domain with $ C^3$-smooth boundary. Then there exists a constant $
C > 0 $ such that for all $a, b \in D $
\[
g(a,b) - C \leq c_D^i(a,b) \leq g(a,b) + C
\]
where
\[
g(a,b) = 2 \log  \left [ \frac{ d_H\big( \pi(a), \pi(b) \big) +
\max \big \{ \big(d(a, \partial D) \big)^{1/2}, \big( d(b,
\partial D) \big)^{1/2} \big \} }{ \big( d(a,
\partial D) \big)^{1/4} \big( d(b, \partial D) \big)^{1/4}  }
\right ],
\]
$ d_H $ is the Carnot-Carath\'{e}odory metric on $ \partial D $
and $ \pi(z) \in \partial D $ such that $ | \pi(z) - z | = d(z,
\partial D ) $.
\end{thm}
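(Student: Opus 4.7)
The plan is to reduce the theorem to two subclaims: (a) $f$ is proper in the sense that any sequence $\{p^j\} \subset D_1$ converging to a point of $\partial D_1$ has image $\{f(p^j)\}$ clustering only on $\partial D_2$; and (b) at each $p^0 \in \partial D_1$, if $q^0 \in \partial D_2$ lies in the cluster set of $p^0$ under $f$, then $f$ extends continuously to a neighbourhood of $p^0$ in $\overline{D}_1$. Together (a) and (b) yield the global continuous extension, since (a) guarantees a nonempty boundary cluster set at each $p^0$, while (b) then forces this cluster set to reduce to a single point; the local extensions automatically patch to a global continuous $\overline{f}:\overline{D}_1 \to \overline{D}_2$.

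For (a), I would establish $c^i$-completeness of both $D_1$ and $D_2$ in each of the three cases. When the domain is $C^3$-smooth strongly pseudoconvex, the Balogh--Bonk estimate (Theorem \ref{P}) gives $c^i_D(a,b) \gtrsim -\frac{1}{2} \log d(b, \partial D) - C$ for any fixed $a \in D$, which forces $c^i_D(a, b_j) \to \infty$ whenever $b_j \to \partial D$. When the domain is a $C^{\infty}$-smooth weakly pseudoconvex finite type domain in $\mathbf{C}^2$, Herbort's estimate (Theorem \ref{X0}) together with the explicit form of $\rho^*$ yields the same divergence, hence $c^i$-completeness. Since $f$ is a $c^i$-isometry, it preserves divergence of distances to a fixed point, so images of boundary-approaching sequences must approach $\partial D_2$.

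For (b), I would adapt the polygonal-path argument of Theorems \ref{1}, \ref{W} and \ref{L} to the inner Carath\'{e}odory setting. Suppose, for contradiction, $\{s^j\} \subset D_1$ satisfies $s^j \to p^0$ but $f(s^j) \not\to q^0$, while $\{p^j\}$ realises $f(p^j) \to q^0$. Joining $p^j$ and $s^j$ by the three-segment polygonal paths $\gamma^j$ described in the proof of Theorem \ref{1} and picking interior points $t^j$ on $\gamma^j$ whose images cluster at a boundary point $u^0 \neq q^0$, I would compare the two-sided estimates on $c^i_{D_1}(p^j, t^j) = c^i_{D_2}(f(p^j), f(t^j))$. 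The upper bound comes from $c^i_{D_1} \leq d_{D_1}$ combined with the Forstneric--Rosay upper estimate on $d_{D_1}$; the lower bound comes directly from Theorem \ref{P} or Theorem \ref{X0} applied to $c^i_{D_2}$, according to which of the cases (i), (ii), (iii) we are in. The auxiliary ``assertion'' $d(f(p^j), \partial D_2) \lesssim d(p^j, \partial D_1)$ (and analogously for $t^j$) is obtained exactly as in Theorem \ref{1} by fixing $a \in D_1$ and comparing the lower bound on $c^i_{D_1}(a, p^j)$ with the upper bound $c^i_{D_2}(f(a), f(p^j)) \leq d_{D_2}(f(a), f(p^j))$. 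The final contradiction is then numerically identical to the one used in the proof of Theorem \ref{1}.

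The principal obstacle is largely a matter of verification rather than a conceptual one: one must check that in each of the three cases the relevant $c^i$ lower estimates are sharp enough to replace Herbort's Kobayashi-style lower bound used in Theorem \ref{1}, and that Theorem \ref{P} (resp.\ Theorem \ref{X0}) can be invoked on the appropriate side of the isometry. Once the inequality $c^i_D \leq d_D$ is used to transfer the Forstneric--Rosay Kobayashi upper estimate to $c^i$, the entire polygonal-path contradiction from Theorems \ref{1}, \ref{W} and \ref{L} transcribes essentially verbatim, and combining this local extension with the properness of $f$ established in step (a) completes the proof in all three cases simultaneously.
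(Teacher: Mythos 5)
There is a fundamental mismatch here: what you have written is not a proof of Theorem \ref{P} at all. Theorem \ref{P} concerns a \emph{single} bounded $C^3$-smooth strongly pseudoconvex domain $D$ and asserts a two-sided additive estimate $g(a,b) - C \leq c_D^i(a,b) \leq g(a,b) + C$, where $g$ is built from the Carnot--Carath\'{e}odory metric on $\partial D$ and the boundary-distance function; there is no isometry $f: D_1 \to D_2$ anywhere in its statement. Your proposal is instead a sketch of Theorem \ref{U} (continuous boundary extension of inner Carath\'{e}odory isometries). Worse, your argument explicitly \emph{invokes} ``the Balogh--Bonk estimate (Theorem \ref{P})'' as one of its main ingredients, so read as a proof of Theorem \ref{P} it is circular.

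For the record, the paper does not prove Theorem \ref{P} either: it is quoted from Balogh--Bonk, and that proof in turn rests on D.\ Ma's precise upper and lower bounds for the \emph{infinitesimal} Carath\'{e}odory metric near a strongly pseudoconvex boundary (roughly $F^C_D(z,v) \approx |v_N|/d(z,\partial D) + |v_T|/\big(d(z,\partial D)\big)^{1/2}$). A genuine proof has two halves of quite different character. The upper bound requires constructing an explicit competitor curve from $a$ to $b$ --- descend radially from $a$ to a height comparable to $d_H\big(\pi(a),\pi(b)\big)^2$, traverse a lift of a near-optimal horizontal (Carnot--Carath\'{e}odory) path on $\partial D$ at that height, then ascend to $b$ --- and estimate its Carath\'{e}odory length by $g(a,b) + C$ using the infinitesimal upper bound. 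The lower bound integrates the infinitesimal lower bound along an arbitrary path, splitting into the normal contribution (which produces the logarithms of the boundary distances) and the tangential contribution (which dominates the Carnot--Carath\'{e}odory distance between the foot points). None of this machinery appears in your proposal, so if the intended target really is Theorem \ref{P}, the argument is missing in its entirety; if the intended target is Theorem \ref{U}, you should say so and compare against the paper's proof of that theorem instead.
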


\noindent \textit{Proof of Theorem \ref{U}:} Part (i) has been
done in Lemma 2.2 of \cite{Seshadri&Verma-2006}. Parts (ii) and
(iii) can be verified by making the relevant changes in the proof
of Theorem \ref{W} -- using the inequality $ c_D^i \leq d_D $ to
get the upper bounds on the inner Carath\'{e}odory distance and
the following consequences of the results of Balogh-Bonk
(\cite{Balogh&Bonk-2000}) and Herbort (\cite{Herbort-2005}):

\begin{itemize}

\item When $ a$ belongs to some fixed compact set $ L $ in $ D$,
then
\[
g(a,b) \approx -(1/2) \log d(b, \partial D) \pm C(L).
\]
Thus
\[
c_D^i(a,b) \approx -(1/2) \log d(b, \partial D) \pm C(L).
\]

\item In the case when $a, b \in D $ are close to two distinct
points on $ \partial D $, then
\[
g(a,b) \approx -(1/2) \log d(a, \partial D) -(1/2) \log d(b,
\partial D) \pm C.
\]
Consequently,
\[
c_D^i(a,b) \approx -(1/2) \log d(a, \partial D) -(1/2) \log d(b,
\partial D) \pm C.
\]

\item When $ a, b $ are close to the same boundary point,
\begin{multline*}
%\begin{align*}
%\begin{eqnarray*}
\ \ \  \qquad g(a,b)   \leq   - (1/2) \log {d(a,\partial D)} +
 (1/2) \log \big( d(a,\partial D) + |a - b| \big)  \\
  +  (1/2) \log \big( d(b,\partial D) + |a - b| \big) - (1/2) \log {d(b,\partial D)} + C
%\end{eqnarray*}
%\end{align*}
\end{multline*}
Therefore,
\begin{multline*}
%\begin{align*}
%\begin{eqnarray*}
\ \ \  \qquad c_D^i(a,b)   \leq   - (1/2) \log {d(a,\partial D)} +
 (1/2) \log \big( d(a,\partial D) + |a - b| \big)  \\
  +  (1/2) \log \big( d(b,\partial D) + |a - b| \big) - (1/2) \log {d(b,\partial D)} + C
%\end{eqnarray*}
%\end{align*}
\end{multline*}

\item When $a, b $ are sufficiently close to two distinct points
on $ \partial D $ for a weakly pseudoconvex finite domain in $
\mathbf{C}^2 $, then $ \tilde{d}(a,b) \gtrsim 1 $ and $
\tilde{d}(b,a) \gtrsim 1 $ so that
\[
c_D^i(a,b) \gtrsim -(1/2) \log d(a, \partial D) -(1/2) \log d(b,
\partial D) \pm C.
\]

\end{itemize}

\noindent These bounds on $ c_D^i(a,b) $ are exactly the ones that
are needed to reprove Theorem \ref{W} for the inner
Carath\'{e}odory distance. \qed

%%%%%%%%%%%%%%%%%%%%%%%%%%%%%%%%%%%%%%%%%%%%%%%%%%%

\end{document}